\renewcommand{\Comment}{$\hspace*{-0.075em} //$ } 
\numberwithin{equation}{section}
\numberwithin{figure}{section}
\newcounter{algorithm}
\theoremstyle{plain}
\newtheorem{thm}{Theorem}[section]
\newtheorem{lem}[thm]{Lemma}
\newtheorem{cl}[thm]{Corollary}
\newtheorem{pr}[thm]{Proposition}
\theoremstyle{definition}
\theoremstyle{remark}
\providecommand{\abs}[1]{\left\lvert #1 \right\rvert}
\providecommand{\set}[1]{\left\lbrace #1 \right\rbrace}
\providecommand{\gen}[1]{\left\langle #1 \right\rangle}
\renewcommand{\Pr}[1]{\operatorname{Pr}[ #1 ]}
\newcommand{\field}[1]{\mathbb{#1}}
\newcommand{\N}{\field{N}}
\newcommand{\Z}{\field{Z}}
\newcommand{\OO}{\field{O}}
\newcommand{\F}{\field{F}}
\newcommand{\PS}{\field{P}}
\newcommand{\MAGMA}{\textsc{Magma}}
\newcommand{\OV}{\mathcal{O}}
\DeclareMathOperator{\GL}{GL}
\DeclareMathOperator{\IM}{Im}
\DeclareMathOperator{\SL}{SL}
\DeclareMathOperator{\SO}{SO}
\DeclareMathOperator{\Sz}{Sz}
\DeclareMathOperator{\chr}{char}
\DeclareMathOperator{\PSL}{PSL}
\DeclareMathOperator{\diag}{diag}
\DeclareMathOperator{\Tr}{Tr}
\DeclareMathOperator{\SLP}{\mathtt{SLP}}
\DeclareMathOperator{\G2}{{^2}G_2}
\DeclareMathOperator{\Ree}{Ree}
\DeclareMathOperator{\Norm}{N}
\DeclareMathOperator{\Cent}{C}
\DeclareMathOperator{\Zent}{Z}
\DeclareMathOperator{\Dih}{D}
\DeclareMathOperator{\Ker}{Ker}
\DeclareMathOperator{\PPSL}{PSL}
\DeclareMathOperator{\antidiag}{antidiag}
\newcommand{\OR}[1]{\operatorname{O} ( #1 )}
\title{Recognising the small Ree groups in their natural representations}
\author{Henrik B\"a\"arnhielm}
\address{Department of Mathematics \\ University of Auckland \\ New Zealand}
\urladdr{http://www.math.auckland.ac.nz/\textasciitilde henrik/}
\email{henrik@math.auckland.ac.nz}
\keywords{matrix group recognition, exceptional groups, constructive recognition}
\begin{document}

\begin{abstract}
  We present Las Vegas algorithms for constructive recognition and
  constructive membership testing of the Ree groups ${^2}\mathrm{G}_2(q) =
  \Ree(q)$, where $q = 3^{2m + 1}$ for some $m > 0$, in their natural
  representations of degree $7$. The input is a generating set $X \subset \GL(7, q)$.
  
  The constructive recognition algorithm is polynomial
  time given a discrete logarithm oracle. The constructive membership
  testing consists of a pre-processing step, that only needs to be
  executed once for a given $X$, and a main step. The latter is polynomial
  time, and the former is polynomial time given a discrete logarithm
  oracle.
  
  Implementations of the algorithms are available for the computer algebra
  system $\MAGMA$.
\end{abstract}

\maketitle

\section{Introduction}
\label{section:intro}

This paper will consider algorithmic problems for a class of finite
simple groups, as matrix groups over finite fields, given by sets of
generators. The most important problems under consideration are the
following:
\begin{enumerate}
\item The \emph{constructive membership} problem. Given $G = \gen{X} \leqslant \GL(d, q)$ and $g \in \GL(d, q)$, decide whether or not $g \in G$, and if so express
  $g$ as a straight line program in $X$.
\item The \emph{constructive recognition} problem. Given $G = \gen{X} \leqslant \GL(d, q)$, construct an \emph{effective isomorphism} from $G$ to a
\emph{standard copy} $H$ of $G$, together with an effective inverse isomorphism. An isomorphism $\psi : G \to H$ is
effective if $\psi(g)$ can be computed efficiently for every $g \in
G$. 
\end{enumerate}

In \cite{baarnhielm05} we considered these problems for the
Suzuki groups. Here we 
consider the Ree groups ${^2}\mathrm{G}_2(q) = \Ree(q)$, $q = 3^{2m + 1}$ for
any $m > 0$. We only consider the \emph{natural representations},
which have dimension $7$. Our standard copy is
$\Ree(q)$, defined in Section \ref{section:ree_theory}. 

The primary motivation for considering these problems comes from the
\emph{matrix group recognition project} \cite{comp_tree, crlg01, Obrien11}.

The ideas used here for the constructive recognition and
membership testing of $\Ree(q)$ are similar to those used in
\cite{baarnhielm05} and \cite{psl_recognition} for $\Sz(q)$ and
$\SL(2, q)$, respectively. The results are also similar in the sense
that we reduce these problems to the discrete logarithm problem. 


In Section
\ref{section:ree_constructive_membership} we solve the constructive
membership problem for $\Ree(q)$. In Section
\ref{section:ree_conjugacy} we solve the constructive recognition problem for $\Ree(q)$ in the natural representations.

The main objective of this paper is to prove the following:

\begin{thm} \label{main_theorem}
Let $q = 3^{2m+1}$ for some $m > 0$. Assume an oracle for the discrete logarithm
  problem in $\F_q$, with time complexity $\OR{\chi_D}$ field operations, and a random element oracle for subgroups of $\GL(7, q)$, with time complexity $\OR{\xi}$ field operations.
\begin{enumerate}
\item There exists a Las Vegas algorithm that for each $\gen{X}
  \leqslant \GL(7, q)$, such
  that $\gen{X} \cong \Ree(q)$, constructs an effective isomorphism $\Psi :
  \gen{X} \to \Ree(q)$, such that $\Psi^{-1}$ is also effective. The algorithm has expected time complexity 
$\OR{\xi \log\log(q) + \log(q)^2 + \chi_D}$ field operations.
\item There exists a Las Vegas algorithm that for each
  $\gen{X} \leqslant \GL(7, q)$, 
  such that $\gen{X} \cong \Ree(q)$, solves the
  constructive membership problem for $\gen{X}$. The algorithm has
  expected time complexity $\OR{\xi + \log(q)^3}$ field operations and also has a
  pre-processing step, which only needs to be executed once for a given
  $X$, with expected time complexity 
$\OR{(\xi \log\log(q) + \log(q)^3 + \chi_D) \log{\log (q)}^2}$
  field operations. The length of the returned $\SLP$ is $\OR{(\log (q) \log \log (q))^2}$.
\end{enumerate}
\end{thm}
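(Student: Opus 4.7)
The proof naturally decomposes along the lines already announced in the introduction: first, a constructive membership algorithm for the standard copy $\Ree(q)$ itself, and then a procedure that turns an arbitrary matrix copy $\gen{X} \cong \Ree(q)$ into the standard one via an effective isomorphism. Granted these two ingredients, both halves of Theorem \ref{main_theorem} follow by composition, and the bulk of the work lies in carefully analysing the complexities. My plan is therefore to treat the two ingredients separately, following the strategy used for $\Sz(q)$ in \cite{baarnhielm05} and for $\SL(2, q)$ in \cite{psl_recognition}.

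For constructive membership in the standard copy, the idea is to exploit the BN-pair structure of $\Ree(q)$. Let $B = U \rtimes H$ be the standard Borel subgroup, with $U$ the Sylow $3$-subgroup of order $q^3$ and $H$ a cyclic torus of order $q - 1$; the Weyl group has order $2$ with a single non-trivial representative $n$. An arbitrary $g \in \Ree(q)$ lies in $B$ or in the big cell $B n B$, and the Bruhat decomposition expresses it as $g = u_1 h n^\varepsilon u_2$. Using the explicit parametrisation of $U$ as a polynomial map $\F_q^3 \to \GL(7, q)$ coming from the natural representation, the parameters of $u_1$ and $u_2$ are read off directly from the matrix entries of $g$; the torus element $h$ is then recovered by a constant number of discrete logarithms in $\F_q^\times$, which accounts for the $\chi_D$ term. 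Writing each factor as a straight-line programme in fixed generators of $\Ree(q)$ gives an $\SLP$ for $g$ whose length is dominated by the storage of the $U$-parameters, i.e.\ $\OR{\log q}$.

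For constructive recognition of $\gen{X}$, the plan is to locate, inside $\gen{X}$, elements behaving exactly like the standard generators above, and then to compute an explicit conjugating matrix $c \in \GL(7, q)$. The natural entry point is an involution: by powering random elements one finds an involution $t \in \gen{X}$ in expected time $\OR{\xi \log\log q}$, which explains the leading term in the complexity bound. The centraliser $\Cent_{\gen{X}}(t) \cong \langle t \rangle \times \PSL(2, q)$ can itself be constructively recognised via \cite{psl_recognition}, and this supplies enough internal structure to pin down a conjugate of the standard Sylow $3$-subgroup $U$ and of the torus $H$ inside $\gen{X}$. The conjugating element $c$ is then read off from a handful of invariant subspaces of $U$ and $H$ acting on $\F_q^7$, which are unique up to the normaliser in $\GL(7, q)$ of the standard copy. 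The isomorphism $\Psi : g \mapsto c^{-1} g c$ and its inverse are then realised by matrix multiplication, hence both are effective.

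The principal obstacle, and the place where most of the complexity analysis must be done, is the conjugation step: ensuring that $c$ conjugates $\gen{X}$ onto $\Ree(q)$ \emph{exactly} rather than merely onto a copy related by $\Aut(\Ree(q))$, and showing that the combined cost of involution search, $\PSL(2, q)$-recognition in the centraliser, and $7$-dimensional linear algebra matches the stated bound $\OR{\xi \log\log(q) + \log(q)(\sigma_0(\log(q)) + \log(q)) + \chi_D}$. Once this is in place, the membership algorithm for $\gen{X}$ is obtained by running the recognition as a pre-processing step and then, for each query $g$, conjugating by $c^{-1}$ and invoking the standard-copy Bruhat decomposition; the on-line cost is $\OR{\xi + \log(q)^3}$ as claimed, while the pre-processing inherits the recognition cost with an extra $\log \log(q)^2$ factor needed for amplification to produce correctly sized building blocks for the membership routine.
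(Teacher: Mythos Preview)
Your high-level decomposition into a conjugation step and a membership step in the standard copy is exactly what the paper does, and your identification of the involution centraliser $\langle j\rangle\times\PSL(2,q)$ as the entry point for recognition is also correct. However, two of your subordinate claims do not match the stated complexity bounds and would not yield a proof of the theorem as written.

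First, your membership procedure reads off the torus factor $h(\lambda)$ via a discrete logarithm at each query. That places a $\chi_D$ term into the \emph{per-query} cost, whereas the theorem asserts $\OR{\xi+\log(q)^3}$ for the main step with $\chi_D$ confined to pre-processing. The paper avoids a discrete log here by a trace trick: once $g$ has been reduced to some diagonal $h(\lambda)$, one does not solve for $\lambda$ but instead writes down an explicit element $u=S(0,0,\sqrt{(x-1)^{3t}})\,S(0,1,0)^{\Upsilon}$ with the same trace $x=\Tr h(\lambda)$ using only the pre-computed standard generators of $\mathrm{O}_3(G_{P_\infty})$ and $\mathrm{O}_3(G_{P_0})$. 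By Proposition~\ref{ree_conjugacy_classes} this $u$ is conjugate to $h(\lambda)^{\pm1}$, and the conjugating element is again obtained by row reduction in the two point stabilisers. This is also why the $\SLP$ length comes out as $\OR{\log(q)\log\log(q)^2}$ rather than the $\OR{\log q}$ you claim: the standard generators themselves are $\SLP$s of length $\OR{\log(q)\log\log(q)^2}$ in $X$, and this overhead propagates.

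Second, your recognition sketch proposes to ``pin down a conjugate of $U$ and $H$'' inside $\gen{X}$ and read the conjugating matrix from their invariant subspaces. The paper does not go through $U$ and $H$ at all. Instead it works entirely with the module structure of $C^{\prime}\cong\PSL(2,q)$: the natural module of $\gen{X}$ restricted to $C^{\prime}$ splits as $V_3\oplus V_4$ (Proposition~\ref{pr_inv_cent_module_structure}), and one matches these summands, via constructive recognition of $\PSL(2,q)$ and the MeatAxe, against the corresponding decomposition for $\Cent_{\Ree(q)}(h(-1))^{\prime}$, up to a Frobenius twist. The resulting change of basis lands $\gen{X}$ inside $\Omega(7,q)$ after a further diagonal correction for the bilinear form (Proposition~\ref{pr_inv_cent_forms}); Proposition~\ref{pr_inv_cent_normaliser} then guarantees that at most two $\Omega(7,q)$-conjugates of $\Ree(q)$ contain this $\PSL(2,q)$, so one lands in $\Ree(q)$ with probability at least $1/2$ and checks with Theorem~\ref{thm_ree_standard_recognition}. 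The $\sigma_0(\log q)$ term you need to account for comes from this final check (irreducibility and field-of-definition testing), not from anything involving $U$ or $H$.
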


Implementations of the algorithms have been done in $\MAGMA$ \cite{magma}. 

A version of the material in this paper appeared in
\cite{baarnhielm_phd}, relying on a few conjectures. Advice by Bill Kantor and Gunter Malle has led to proofs of the conjectures, for which we are very grateful. In particular, the central idea behind the algorithm in Section
\ref{section:ree_conjugacy} is due to Bill Kantor.

We also thank John Bray, Peter Brooksbank, Alexander
Hulpke, Charles Leedham-Green, Eamonn O'Brien, Maud de Visscher, Robert Wilson and the anonymous referee for their helpful comments.

\section{Preliminaries}
We will now briefly discuss some general concepts that are needed later.

\subsection{Complexity} \label{complexity_general} 
Time complexity is measured in field operations. Basic
matrix arithmetic in $\GL(7, q)$ requires $\OR{1}$ field operations. Raising a matrix to
an $\OR{q}$ power requires $\OR{\log q}$ field operations, for example
using \cite[Lemma 10.1]{classical_recognise}.

We never need to compute large precise orders of matrices. 
It is sufficient to compute \emph{pseudo-orders} \cite[Section $8$]{babaibeals01}.
This can be done using \cite{crlg95}, in $\OR{\log(q) \log\log(q)}$ field
operations.

We shall assume an oracle for the discrete logarithm problem in
$\F_q$ \cite[Chapter $3$]{MR1745660}, requiring $\OR{\chi_D}$ field operations.

\subsection{Straight line programs} \label{section_slp}

For constructive membership testing, we want to express an element of
a group $\gen{X}$ as a \emph{straight line program} in $X$,
abbreviated to $\SLP$. An $\SLP$ is a data structure for a word, which
allows for efficient computations \cite[Section 1.2.3]{seress03}.

\subsection{Random group elements}
\label{section:random_elements}

Our algorithms need to construct (nearly) uniformly distributed random
elements of a subgroup of $\GL(7, q)$. The algorithm
of \cite{babai91} solves this task in polynomial time, but it is not commonly used in practice. The \emph{product replacement} algorithm of
\cite{lg95} also solves this task. It is fast in practice and polynomial time
\cite{Pak00}.

We shall assume that we have a random element oracle, which produces a
uniformly random element of $\gen{X} \leqslant \GL(7, q)$ using $\OR{\xi}$ field
operations, and returns it as an $\SLP$ in $X$.

An important issue is the length of the $\SLP$s that are computed. The
length of the $\SLP$s must be polynomial, otherwise evaluation would not be
polynomial time. We assume that $\SLP$s of random
elements have length $\OR{n}$ where $n$ is the number of random
elements that have been selected so far during the execution of the
algorithm. 

In \cite{lgsm02}, a variant of the product replacement algorithm is
presented that constructs random elements of the normal closure of a
subgroup. This will be used here to construct random elements of the
derived subgroup of a group $\gen{X}$, using the fact that this is
precisely the normal closure of $\gen{[x, y] : x, y \in X}$.

\subsection{Probabilistic algorithms}
\label{section:prob_algorithms}
The algorithms we consider are probabilistic of the type known as \emph{Las Vegas} algorithms. This type of
algorithm is discussed in \cite[Section 3.2.1]{hcgt}. We present Las Vegas algorithms in the same way as in \cite{baarnhielm05}.

\subsection{Recognition of $\PSL(2, q)$}
\label{section:psl_recognition}
In \cite{psl_recognition}, an algorithm for constructive recognition
and constructive membership testing of $\PSL(2, q)$ is presented. 

We will use \cite{psl_recognition} since $\PSL(2, q)$ arises as a
subgroup of $\Ree(q)$. Because
of this, we state the main result here. 

\begin{thm} \label{thm_psl_recognition} Assume an oracle for the
  discrete logarithm problem in $\F_q$. There exists a Las Vegas
  algorithm that, given $\gen{X} \leqslant \GL(d, q)$, which acts
  absolutely irreducibly and cannot be written over a smaller field,
  with $\gen{X} \cong \PPSL(2, q)$ and $q = p^e$, constructs an effective
  isomorphism $\psi : \gen{X} \to \PPSL(2, q)$ and performs
  pre-processing for constructive membership testing. The algorithm has
  expected time complexity
\[ \OR{(\xi + d^3 \log(q) \log \log(q^d)) \log \log(q) + d^7 \abs{X} + d \chi_D + d\xi} \]
 field operations.

 The inverse of $\psi$ is also effective. Each image of $\psi$ can be computed using $\OR{d^3}$ field
 operations, and each pre-image using $\OR{d^3 \log(q)\log\log(q) + e^3}$ field operations.
 After the algorithm has executed, constructive membership testing of $g
 \in \GL(d, q)$ requires $\OR{d^3 \log(q)\log\log(q) + e^3}$ field operations, and the
 resulting $\SLP$ has length $\OR{\log (q) \log\log(q)}$.
\end{thm}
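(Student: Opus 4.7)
The plan is to follow the constructive recognition paradigm used in \cite{baarnhielm05} for the Suzuki groups, adapted to $\PPSL(2, q)$. First I would locate a pair of \emph{standard generators} for $\gen{X}$: the goal is to find an involution $j$ together with an element $t$ generating a maximal torus of known cyclic order, chosen so that $\gen{j, t}$ is the whole group in a prescribed way. Involutions in $\PPSL(2, q)$ form a conjugacy class of density $\approx 1/q$, so random search yields one in $\OR{\log q}$ selections on average; applying the Bray-type involution centraliser algorithm then produces generators for (a subgroup of finite index in) $\Cent_{\gen{X}}(j)$, which is dihedral of order $q - 1$ or $q + 1$.

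Next, using the discrete logarithm oracle, I would locate an element of the appropriate prime-power order inside this dihedral centraliser to serve as the torus generator $t$, and rescale so that its eigenvalue structure in the natural $\PPSL(2, q)$-module matches the chosen standard copy. The effective isomorphism $\varphi : \gen{X} \to \PPSL(2, q)$ is then defined by sending the constructed pair to the standard generators of the natural copy. To evaluate $\varphi$ at an arbitrary $g$, I would realise $g$ via its action on the two eigenspaces and cosets thereof, matching against the natural action of $\PPSL(2, q)$ on $\PS^1(\F_q)$; this reduces to solving quadratics over $\F_q$ and invoking the discrete logarithm oracle to name the resulting field elements, at cost $\OR{d^3}$ per image from the matrix arithmetic in dimension $d$.

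For constructive membership (the inverse direction), I would use the Bruhat decomposition of $\PPSL(2, q)$: every element is a product of an upper unitriangular element, a diagonal element, the standard involution, and another upper unitriangular element. Since unitriangular and diagonal elements are easily expressed as $\SLP$s in the standard generators (via the discrete logarithm of their off-diagonal or diagonal entry written in base-$p$), this produces an $\SLP$ of length $\OR{\log q \log \log q}$; composing with $\varphi^{-1}$ on the two tori and one involution gives a word in $X$. Each such evaluation then requires $\OR{d^3 \log(q) \log \log(q)}$ field operations for matrix exponentiations and one additional $\OR{e^3}$ for the base-$p$ expansion over $\F_{p^e}$.

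The main obstacle, which dominates the pre-processing cost, is setting up the concrete identification of the given representation with a natural one: even though $\gen{X}$ is known to be absolutely irreducible and not writable over a proper subfield, the module structure must still be matched against the list of irreducible $\PPSL(2, q)$-modules, whose dimensions divide a small set of parameters; iterating over divisors of $d$ and solving $\OR{d^2}$-dimensional linear systems (one per generator in $X$) to find the change-of-basis matrix accounts for the $\OR{d^5 \sigma_0(d) \abs{X}}$ term. The factor of $\log \log(q)$ outside the parentheses in the main complexity term comes from pseudo-order computations via \cite{crlg95}, repeated a bounded number of times during the search for an involution of the correct conjugacy class.
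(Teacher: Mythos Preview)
This theorem is not proved in the paper; it is simply quoted from \cite{psl_recognition} as a tool to be used later (see the sentence preceding the statement: ``we state the main result here''). There is therefore no proof in the paper to compare your proposal against.

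As for your sketch on its own terms: the broad outline (find standard generators, identify the module among the known irreducibles of $\PPSL(2,q)$, use Bruhat decomposition for membership) is plausible and is indeed the spirit of \cite{psl_recognition}, but one step is arithmetically off. You write that involutions have density $\approx 1/q$ and that therefore random search finds one in $\OR{\log q}$ selections. A density of $1/q$ would require $\OR{q}$ selections, not $\OR{\log q}$; the usual remedy, and the one used throughout this paper, is to search instead for an element of \emph{even order} (a positive proportion of the group) and power it up. You should rephrase that step accordingly, and note that this is also what justifies the $\log\log(q)$ factors (pseudo-order computations on the random elements), not the search for an involution \emph{per se}.
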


\subsection{Notation}
Some notation will be fixed throughout the paper.

\begin{itemize}
\item For a group $G$ and a prime $p$, let $\mathrm{O}_p(G)$ denote the
largest normal $p$-subgroup of $G$. 
\item If a group $G$ acts on a set $\OV$ and $P
\in \OV$, then $G_P$ denotes the stabiliser in $G$ of $P$.
\item Let $q = 3^{2m+1}$, where $m > 0$, be the size of the finite field $\F_q$. Let $t = 3^m = \sqrt{q/3}$ and let $\omega$ be a fixed primitive element of $\F_q$.
\item Let
\[ \antidiag(x_1, \dotsc, x_7) = \begin{bmatrix}
0 & 0 & 0 & 0 & 0 & 0 & x_1 \\
0 & 0 & 0 & 0 & 0 & x_2 & 0 \\
0 & 0 & 0 & 0 & x_3 & 0 & 0 \\
0 & 0 & 0 & x_4 & 0 & 0 & 0 \\
0 & 0 & x_5 & 0 & 0 & 0 & 0 \\
0 & x_6 & 0 & 0 & 0 & 0 & 0 \\
x_7 & 0 & 0 & 0 & 0 & 0 & 0 
\end{bmatrix} \]

\item For a module $M$ or a matrix $g$, we denote the symmetric square of $M$ or $g$ by $\mathcal{S}^2(M)$ and $\mathcal{S}^2(g)$, respectively.
\item We will denote our standard copy of the small Ree group, defined in Section \ref{section:ree_theory}, by $\mathfrak{S}$ and $\Ree(q)$.
\item We will denote the natural module of $\PSL(2, q)$ by $\mathfrak{A}$.
\item The time complexity in field operations for an invocation of a random element oracle on a group $G \leqslant \GL(7, q)$ will be denoted $\xi$. 
\item The time complexity in field operations for an invocation of a discrete logarithm oracle on $\F_q$ will be denoted $\chi_D$. 
\item For $n \in \N$, let $\sigma_0(n)$ be the number of divisors of $n$. Note that $\sigma_0(n) \leqslant n$ and from \cite[pp. $64, 359,
262$]{MR568909}, for every $\varepsilon > 0$, if $n$ is
sufficiently large, then $\sigma_0(n) < 2^{(1 + \varepsilon)
  \log_{\mathrm{e}}(n) / \log\log_{\mathrm{e}}(n)}$. 

\item For a vector space $V$, we denote the corresponding projective space by $\PS(V)$.
\item We denote the standard $n$-dimensional vector space over $\F_q$ by $\F_q^n$, and the corresponding projective space by $\PS^{n-1}(\F_q)$.
\item We denote the dihedral group of order $n$ by $\Dih_n$.
\item We denote the Euler totient function by $\phi$.
\item We denote the Frobenius automorphism by $\varphi$.
\end{itemize}

\section{The small Ree groups} \label{section:ree_theory}

The small Ree groups were first described in \cite{MR0125154, small_ree}. An elementary construction is given in \cite[Chapter $4.5$]{MR2562037}.


 We now define our standard copy of the Ree groups. The generators we use are those described in \cite{ree_gens}. For $x \in \F_q$ and
 $\lambda \in \F_q^{\times}$, define the matrices
\begin{equation}
\alpha(x) = \begin{bmatrix}
1 & x^t & 0 & 0 & -x^{3 t + 1}  & -x^{3t + 2}  & x^{4t + 2}  \\
0 & 1 & x &  x^{t + 1} & -x^{2t + 1}  & 0 & -x^{3t + 2} \\
0 & 0 & 1 & x^t & -x^{2 t} & 0 & x^{3t + 1}  \\
0 & 0 & 0 & 1 & x^t & 0 & 0 \\
0 & 0 & 0 & 0 &	1 & -x & x^{t + 1} \\
0 & 0 & 0 & 0 & 0 & 1 & -x^t \\
0 & 0 & 0 & 0 & 0 & 0 &	1
\end{bmatrix} 
\end{equation}
\begin{equation}
\beta(x) = \begin{bmatrix}
1 & 0 & -x^t & 0 & -x & 0 & -x^{t + 1} \\
0 & 1 & 0 & x^t & 0 & -x^{2t} & 0 \\
0 & 0 & 1 & 0 & 0 & 0 & x \\
0 & 0 & 0 & 1 & 0 & x^t & 0 \\
0 & 0 & 0 & 0 &	1 & 0 & x^t \\
0 & 0 & 0 & 0 & 0 & 1 & 0 \\
0 & 0 & 0 & 0 & 0 & 0 &	1
\end{bmatrix} 
\end{equation}
\begin{equation}
\gamma(x) = \begin{bmatrix}
1 & 0 & 0 &  -x^t & 0 & -x &  -x^{2t} \\
0 & 1 & 0 & 0 &  -x^t & 0 & x  \\
0 & 0 & 1 & 0 & 0 & x^t & 0 \\
0 & 0 & 0 & 1 & 0 & 0 & -x^t \\
0 & 0 & 0 & 0 &	1 & 0 & 0 \\
0 & 0 & 0 & 0 & 0 & 1 & 0 \\
0 & 0 & 0 & 0 & 0 & 0 &	1
\end{bmatrix} 
\end{equation}
\begin{equation}
h(\lambda) = \diag(\lambda^t, \lambda^{1 - t}, \lambda^{2t-1}, 1, \lambda^{1-2t}, \lambda^{t-1}, \lambda^{-t})
\end{equation}
\begin{equation}
\Upsilon = \antidiag(-1, -1, -1, -1, -1, -1, -1) 
\end{equation}

and define the Ree group as
\begin{equation}
\mathfrak{S} = \Ree(q) = \gen{ \alpha(x), \beta(x), \gamma(x), h(\lambda), \Upsilon \mid x \in \F_q, \lambda \in \F_q^{\times}}.
\end{equation}
Also, define the subgroups of upper triangular and diagonal matrices:
\begin{align}
U(q) &= \gen{\alpha(x), \beta(x), \gamma(x) \mid x \in \F_q} \\
H(q) &= \set{h(\lambda) \mid \lambda \in \F_q^{\times}} \cong \F_q^{\times}.
\end{align}

From \cite{ree85} we know that each element of $U(q)$ can be
expressed uniquely as 
\begin{equation}
S(a, b, c) = \alpha(a) \beta(b) \gamma(c)
\end{equation}
so $U(q) = \set{S(a, b, c) \mid a, b, c \in \F_q}$, and it
follows that $\abs{U(q)} = q^3$. Also, $U(q)$ is a Sylow
$3$-subgroup of $\Ree(q)$, and direct calculations show that 
\begin{align}
\label{ree_matrix_id1} \begin{split}
& S(a_1, b_1, c_1) S(a_2, b_2, c_2) = \\
& = S(a_1 + a_2, b_1 + b_2 - a_1 a_2^{3t}, c_1 + c_2 - a_2 b_1 + a_1 a_2^{3t + 1} - a_1^2 a_2^{3t}), 
\end{split} \\
& S(a, b, c)^{-1} = S(-a, -(b + a^{3t + 1}), -(c + ab - a^{3t + 2})), \\
\begin{split}
& S(a_1, b_1, c_1)^{S(a_2, b_2, c_2)} = \\
& = S(a_1, b_1 - a_1 a_2^{3t} + a_2 a_1^{3t}, c_1 + a_1 b_2 - a_2 b_1 + a_1 a_2^{3t + 1} - a_2 a_1^{3t + 1} - a_1^2 a_2^{3t} + a_2^2 a_1^{3t})
\end{split}
\end{align}
and
\begin{align} 
\label{ree_matrix_id2} S(a, b, c)^{h(\lambda)} &= S(\lambda^{3t - 2} a, \lambda^{1 - 3t} b, \lambda^{-1} c).
\end{align}

It follows that $\mathfrak{S} = \Ree(q) = \gen{S(1, 0, 0), h(\omega), \Upsilon}$, and these are our standard generators. The group preserves a symmetric bilinear form on $\F_q^7$, represented by the matrix
\begin{equation} \label{standard_bilinear_form}
J = \antidiag(1, 1, 1, -1, 1, 1, 1)
\end{equation}

From \cite{ward66}, \cite[Chapter $11$]{huppertIII} and \cite[Chapter $4.5$]{MR2562037} we obtain the following.
\begin{pr} \label{sylow3_props}
Let $G = \Ree(q)$.
\begin{enumerate}
\item $\abs{G} = q^3 (q^3 + 1) (q - 1)$ where $\gcd(q^3 + 1, q - 1) = 2$.
\item Conjugates of $U(q)$ intersect trivially.
\item The centre $\Zent(U(q)) = \set{S(0, 0, c) \mid c \in F_q}$.
\item The derived group $U(q)^{\prime} = \set{S(0, b, c) \mid b, c \in F_q}$, and its elements have order $3$.
\item The elements in $U(q) \setminus U(q)^{\prime} = \set{S(a, b, c) \mid a \neq 0}$ have order $9$ and their cubes form $\Zent(U(q)) \setminus \gen{1}$. 
\item $\Norm_G(U(q)) = U(q) H(q)$ and $G$ acts doubly transitively on the right cosets of $\Norm_G(U(q))$, \emph{i.e.} on a set of size $q^3 + 1$.
\item $U(q) H(q)$ is a Frobenius group with Frobenius kernel $U(q)$.
\item The proportion of elements of order $q-1$ in $U(q) H(q)$ is $\phi(q - 1) / (q - 1)$, where $\phi$ is the Euler totient function.
\item $\Norm_G(H(q)) = \gen{h(\omega), \Upsilon} \cong \Dih_{2(q-1)}$.
\end{enumerate}
\end{pr}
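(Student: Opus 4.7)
The strategy is to reduce each of the nine items either to a standard fact about ${^2}G_2(q)$ available in the cited references or to a direct computation with the identities \eqref{ree_matrix_id1}--\eqref{ree_matrix_id2}. Items (1), (6) and (7) are classical: the order formula and the existence of a doubly transitive action of degree $q^3 + 1$ are part of Ree's original construction \cite{MR0125154, small_ree}; the normalizer $\Norm_G(U(q)) = U(q) H(q)$ together with its Frobenius structure is the Borel-subgroup description of the rank-$1$ BN-pair of $G$, stated in both \cite{huppertIII} and \cite[Chapter 4]{MR2562037}. Item (2), that distinct conjugates of the Sylow $3$-subgroup intersect trivially, is the TI (trivial intersection) property of the unipotent radical, again a standard consequence of the BN-pair and contained in \cite{ward66}.

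For items (3), (4), (5) the plan is to compute inside $U(q)$ using \eqref{ree_matrix_id1}. For (3), the multiplication formula gives $S(a_1,b_1,c_1) S(a_2,b_2,c_2) \cdot S(a_2,b_2,c_2)^{-1} S(a_1,b_1,c_1)^{-1}$; comparing this with the opposite product shows that $S(a_2,b_2,c_2)$ centralises all of $U(q)$ if and only if $a_1 a_2^{3t} = a_2 a_1^{3t}$ for every $a_1$, forcing $a_2 = 0$, and then the $c$-components force $b_2 = 0$. Conversely, elements $S(0,0,c)$ commute with everything in $U(q)$ by direct inspection of \eqref{ree_matrix_id1}. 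For (4), the $a$-component of a commutator is always zero, so $U(q)' \subseteq \{S(0,b,c)\}$; varying parameters in the conjugation formula shows that every $S(0,b,c)$ arises as a product of commutators. For (5), one computes $S(a,b,c)^2$ and then $S(a,b,c)^3$ and obtains (in characteristic $3$) $S(a,b,c)^3 = S(0, 0, -a^{3t+2})$, which lies in $\Zent(U(q))$ and is non-trivial precisely when $a \neq 0$; together with (4) this gives order $9$ and identifies the cubes.

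Item (9) is verified by a direct calculation: $\Upsilon$ is the antidiagonal involution (up to sign), so conjugation by $\Upsilon$ maps $h(\lambda)$ to $h(\lambda^{-1})$ and inverts $H(q)$, giving a dihedral group of order $2(q-1)$; maximality follows from the standard classification of maximal subgroups of $\Ree(q)$ in \cite{MR2562037}. For item (8), using that $U(q) H(q)$ is a Frobenius group with kernel $U(q)$ and complement $H(q)$, the complements $H(q)^u$, $u \in U(q)$, are pairwise disjoint outside the identity, giving exactly $q^3$ such complements; each contains $\phi(q-1)$ elements of order $q-1$, so the total number of such elements in $U(q) H(q)$ is $q^3 \phi(q-1)$, and division by $\abs{U(q) H(q)} = q^3(q-1)$ yields the claimed proportion $\phi(q-1)/(q-1)$. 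The main obstacle is purely notational bookkeeping in the $c$-component of \eqref{ree_matrix_id1}; once the characteristic-$3$ simplifications and the Frobenius identity $(-a)^{3t} = -a^{3t}$ are applied carefully, each computation collapses to a short identity.
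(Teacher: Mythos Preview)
The paper does not actually prove this proposition: it simply records it as a consequence of \cite{ward66}, \cite[Chapter~11]{huppertIII} and \cite[Chapter~4]{MR2562037}, with no argument given. Your plan is consistent with this, invoking the same references for the structural items (1), (2), (6), (7), but it goes further by sketching direct computations for (3)--(5), (8) using \eqref{ree_matrix_id1}--\eqref{ree_matrix_id2}. Those computations are correct; in particular your cube formula $S(a,b,c)^3 = S(0,0,-a^{3t+2})$ checks out against \eqref{ree_matrix_id1}, and the Frobenius counting for (8) is the standard argument.

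One point to fix in (9): your appeal to ``maximality'' to conclude that $\Norm_G(H(q))$ is no larger than $\gen{h(\omega),\Upsilon}$ does not work, because that dihedral group is not maximal in $\Ree(q)$ (it sits inside the centraliser of the involution $h(-1)$, cf.\ Proposition~\ref{ree_maximal_subgroup_list}). A clean way to get the upper bound is to observe that $H(q)$ fixes exactly the two points $P_0,P_\infty$ of $\OV$, so $\Norm_G(H(q))$ must preserve the set $\{P_0,P_\infty\}$; the setwise stabiliser of a pair of points has order $2(q-1)$ by the doubly transitive action, giving equality. Alternatively, this is the torus-normaliser statement from the rank-$1$ BN-pair, available in the same references.
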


For our purposes, we want another set to act (equivalently) upon.

\begin{pr} \label{ree_doubly_transitive_action}
There exists $\OV \subseteq \PS^6(\F_q)$ on which $G = \Ree(q)$ acts faithfully and doubly transitively. Namely,
\begin{equation} \label{ree_ovoid_def}
\begin{split}
\OV = &\set{(0 : 0 : 0 : 0 : 0 : 0 : 1)} \cup \\
 &\lbrace (1 : a^t : -b^t : (ab)^t - c^t : -b - a^{3t + 1} - (ac)^t :  -c - (bc)^t - a^{3t + 2} - a^t b^{2t} : \\ 
& a^t c - b^{t + 1} + a^{4t + 2} - c^{2t} - a^{3t + 1} b^t - (abc)^t) \rbrace 
\end{split}
\end{equation}
Moreover, the stabiliser of $P_{\infty} = (0 : 0 : 0 : 0 : 0 : 0 : 1)$ is $U(q) H(q)$, the stabiliser of $P_0 = (1 : 0 : 0 : 0 : 0 : 0 : 0)$ is $(U(q) H(q))^{\Upsilon}$ and the stabiliser of $(P_{\infty}, P_0)$ is $H(q)$.
\end{pr}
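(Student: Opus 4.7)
The plan is to identify $\OV$ with the $G$-orbit of $P_\infty$ under the right action of $G = \Ree(q)$ on row vectors in $\F_q^7$, and then transfer the doubly transitive action on right cosets of $B := U(q) H(q)$ supplied by Proposition \ref{sylow3_props}(6).

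First I would check directly that $B$ fixes $P_\infty$. Each of $\alpha(x), \beta(x), \gamma(x)$ is upper unitriangular, so its seventh row equals $e_7$; and $h(\lambda)$ is diagonal with $(7,7)$-entry $\lambda^{-t}$, which scales $e_7$ and hence fixes $P_\infty$ projectively. Inspecting the seventh row of $\Upsilon$ gives $P_\infty \cdot \Upsilon = P_0$. Since $h(\lambda)$ scales $e_1$ by $\lambda^{t}$ and conjugation by $\Upsilon$ sends $h(\lambda)$ to $h(\lambda^{-1})$, we obtain $H(q) = H(q)^{\Upsilon} \leqslant B \cap B^{\Upsilon} \leqslant G_{P_\infty} \cap G_{P_0}$.

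Next I would upgrade the inclusion $B \leqslant G_{P_\infty}$ to an equality. By Proposition \ref{sylow3_props}(6), $G$ acts doubly transitively, hence primitively, on right cosets of $B$, so $B$ is maximal in $G$. As $\Upsilon$ does not fix $P_\infty$, the stabiliser $G_{P_\infty}$ is a proper subgroup of $G$ containing $B$, hence equals $B$. The $G$-orbit of $P_\infty$ therefore has size $[G : B] = q^3 + 1$ and is $G$-isomorphic to the right coset space of $B$; double transitivity on this orbit is immediate, and faithfulness follows from the simplicity of $\Ree(q)$ for $m > 0$. Applying $\Upsilon$ gives $G_{P_0} = B^{\Upsilon}$; combined with $U(q) \cap U(q)^{\Upsilon} = 1$ from Proposition \ref{sylow3_props}(2), this yields $G_{P_\infty} \cap G_{P_0} = B \cap B^{\Upsilon} = H(q)$.

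It remains to show that this orbit coincides with the parametrised set in (\ref{ree_ovoid_def}). The decomposition $G = B \sqcup B \Upsilon B$ (a consequence of double transitivity: every $g \notin B$ satisfies $Bg = B \Upsilon b$ for some $b \in B$), combined with $H(q) \leqslant G_{P_0}$ and the uniqueness $U(q) = \set{S(a,b,c) \mid a,b,c \in \F_q}$, shows that the orbit equals
\[ \set{P_\infty} \cup \set{P_0 \cdot S(a,b,c) \mid a, b, c \in \F_q}. \]
Since $P_0$ is the projective class of $e_1$, the point $P_0 \cdot S(a,b,c)$ is the projective class of the first row of $\alpha(a) \beta(b) \gamma(c)$. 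The remaining task --- and the only real calculation in the proof --- is to carry out this matrix product, read off its first row, and verify entry by entry that the seven components agree with those listed in (\ref{ree_ovoid_def}), carefully tracking the characteristic-$3$ signs and the Frobenius-type exponents $t, 3t+1, 4t+2$, etc. Distinctness of the $q^3$ parametrised points is automatic, since $U(q) \cap B^{\Upsilon} \subseteq B \cap B^{\Upsilon} = H(q)$ together with $U(q) \cap H(q) = 1$ forces the $U(q)$-stabiliser of $P_0$ to be trivial. The hard part is purely the bookkeeping inside this $7 \times 7$ matrix multiplication; everything else is structural and rests on Proposition \ref{sylow3_props}.
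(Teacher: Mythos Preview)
Your proposal is correct and follows essentially the same route as the paper: identify $\OV$ with the orbit of $P_\infty$, use the Bruhat decomposition $G = B \sqcup B\Upsilon B$ to parametrise the orbit as $\{P_\infty\} \cup P_0 \cdot U(q)$, and observe that $P_0 \cdot S(a,b,c)$ is the first row of $S(a,b,c)$. The only notable difference is that you deduce $G_{P_\infty} = B$ from the maximality of $B$, whereas the paper argues directly that any $g = x\Upsilon y \notin B$ sends $P_\infty$ to a point with nonzero first coordinate; both are valid, and your treatment of the two-point stabiliser via Proposition~\ref{sylow3_props}(2) is slightly more explicit than the paper's.
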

\begin{proof} Notice that $\OV \setminus
  \set{P_{\infty}}$ consists of the first rows of the elements of $U(q)H(q)$.
  From \cite{ree85} we know that $G$ is the
  disjoint union of $U(q)H(q)$ and $U(q)H(q) \Upsilon U(q)H(q)$.
  Define a map between the $G$-sets as $(U(q)H(q))g \mapsto P_{\infty}
  g$.

  If $g \in U(q) H(q)$ then $P_{\infty} g = P_{\infty}$ and hence the
  stabiliser of $P_{\infty}$ is $U(q) H(q)$. If $g \notin U(q) H(q)$
  then $g = x \Upsilon y$ where $x,y \in U(q) H(q)$. Hence $P_{\infty}
  g = P_0 y \in \OV$ since $P_0 y$ is the first row of $y$. It follows
  that the map defines an equivalence between the $G$-sets.
\end{proof}



\begin{pr} \label{pr_involution_props}
Let $G = \Ree(q)$.
\begin{enumerate}
\item The stabiliser in $G$ of any two distinct points of $\OV$ is conjugate to $H(q)$. 
\item The stabiliser of any triple of distinct points has order at most $2$.
\item The number of elements in $G$ that fix exactly one point is $q^6 - 1$.
\item All involutions in $G$ are conjugate in $G$.
\item An involution fixes $q + 1$ points.
\end{enumerate}
\end{pr}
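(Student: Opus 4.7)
My strategy is to combine the doubly transitive action from Proposition \ref{ree_doubly_transitive_action} with the structural information in Proposition \ref{sylow3_props} and direct computations using the generator formulas \eqref{ree_matrix_id1}--\eqref{ree_matrix_id2}.

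For part (1), the first assertion follows immediately from $2$-transitivity: every pair of distinct points of $\OV$ is $G$-equivalent to $(P_\infty, P_0)$, whose pointwise stabiliser is $H(q)$, so every pairwise stabiliser is conjugate to $H(q)$. For the triple stabiliser, observe that it is a subgroup of a conjugate of the cyclic group $H(q)$, so it suffices to bound its order by $2$. Parametrising $\OV \setminus \{P_\infty\}$ by $\F_q^3$ through the points $P_{a,b,c}$, the action of $h(\lambda)$ becomes $(a,b,c) \mapsto (\lambda^{3t-2}a, \lambda^{1-3t}b, \lambda^{-1}c)$ by \eqref{ree_matrix_id2}. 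A nontrivial $\lambda$ can fix a nonzero $(a,b,c)$ only when $\lambda$ satisfies one of $\lambda^{3t-2} = 1$, $\lambda^{3t-1} = 1$, or $\lambda = 1$. Since $\gcd(3t-2, q-1) = 1$ (by a routine Euclidean reduction to $\gcd(t-1, 1)$) and $\gcd(3t-1, q-1) = 3^{\gcd(m+1, 2m+1)} - 1 = 2$, the only option is $\lambda = -1$.

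Part (2) follows from Proposition \ref{sylow3_props}(2): the $q^3 + 1$ conjugates of $U(q)$ intersect pairwise trivially, so the $q^3 - 1$ non-identity elements of each are pairwise disjoint across Sylow subgroups, giving $(q^3 + 1)(q^3 - 1) = q^6 - 1$ non-identity elements of $3$-power order. Each such element lies in a unique Sylow $3$-subgroup $U_P$ and hence fixes exactly the point $P$: fixation of any further $Q$ would force it into $U_P \cap U_Q = 1$. For part (3), I would use the Sylow $2$-structure of $\Ree(q)$: a Sylow $2$-subgroup $T$ is elementary abelian of order $8$ (which can be read off the known involution centraliser $2 \times \PSL(2, q)$), and $\Norm_G(T)/T$ contains an element of order $7$ acting transitively on $T \setminus \{1\}$; combined with Sylow conjugacy of the $T$'s, this yields a single conjugacy class of involutions.

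Part (4) is a direct matrix calculation. Since $t = 3^m$ is odd, the diagonal entries of $h(-1)$ simplify by parity to $\diag(-1, 1, -1, 1, -1, 1, -1)$. Applying this entrywise to the representative of $P_{a,b,c}$ given in \eqref{ree_ovoid_def} and requiring projective equality with $P_{a,b,c}$ forces $a = 0$ and $c = 0$ (the $b$-coordinate is unconstrained, since the relevant exponents $1 - 3t$, $t - 1$, and $2t$ are all even). The fixed points of $h(-1)$ are therefore $P_\infty$, $P_0$, and the $q - 1$ points $P_{0, b, 0}$ with $b \in \F_q^\times$, for a total of $q + 1$; the claim for a general involution then follows from (3). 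The main obstacle I anticipate is the parity and $\gcd$ bookkeeping underpinning parts (1) and (4), since the proof relies on several arithmetic coincidences peculiar to $q = 3^{2m+1}$, notably $\gcd(3t-2, q-1) = 1$, $\gcd(3t-1, q-1) = 2$, and the correct parities of the integer exponents appearing in $h(\lambda)$.
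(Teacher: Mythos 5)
Your proposal is correct in substance. For parts (2) and (4) it follows essentially the same route as the paper: the paper counts the non-trivial elements of the $q^3+1$ pairwise trivially intersecting conjugates of $U(q)$, and computes the fixed points of $h(-1)$ on $\OV$ by noting that fixation forces the coordinates $p_2,p_4,p_6$ to vanish, which is exactly your condition $a=c=0$. The genuine divergence is in parts (1) and (3), where the paper simply cites Huppert--Blackburn, Chapter XI, Theorem 13.2(d),(e), while you argue directly. Your part (1) is a real self-contained proof: reducing a triple to $(P_\infty,P_0,P_{a,b,c})$ by double transitivity and bounding $H(q)_{P_{a,b,c}}$ via the action $(a,b,c)\mapsto(\lambda^{3t-2}a,\lambda^{1-3t}b,\lambda^{-1}c)$ together with $\gcd(3t-2,q-1)=1$ and $\gcd(3t-1,q-1)=2$ is the right computation, and your arithmetic checks out. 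Do note that what you prove (and what is in fact true, and all the paper ever uses, e.g.\ in Proposition \ref{pr_element_props}) is that a triple stabiliser has order \emph{at most} $2$: for a triple not contained in the $(q+1)$-point fixed set of an involution the stabiliser is trivial, so the wording ``has order $2$'' must be read as ``divides $2$''. Your part (3) is a different argument (Sylow $2$-structure plus a transitive $7$-element in $N_G(T)$); the step deducing that $T$ is elementary abelian of order $8$ from $\Cent_G(j)\cong 2\times\PSL(2,q)$ and $q\equiv 3\pmod 8$ is fine, but the existence of an order-$7$ element of $N_G(T)$ acting transitively on $T\setminus\{1\}$ is itself a non-trivial structural fact you assert without proof. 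It is true --- for instance, the subfield subgroup $\Ree(3)'\cong\PSL(2,8)$ already contains a full Sylow $2$-subgroup of $\Ree(q)$ together with its $\mathrm{AGL}(1,8)$ normaliser --- but as written that step is a citation in disguise, at the same level of rigour as the paper's appeal to Huppert. Finally, in part (2) both you and the paper leave implicit the converse direction, that an element fixing exactly one point must be a non-trivial $3$-element; this follows from the Frobenius structure of $U(q)H(q)$ (Proposition \ref{sylow3_props}), since any element of $G_P$ outside the kernel $U_P$ lies in a complement and therefore fixes a second point.
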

\begin{proof}
\begin{enumerate}
\item Immediate from \cite[Chapter $11$, Theorem $13.2$(d)]{huppertIII}.
\item The element $h(\lambda)$ can only fix a point $P \in \OV \setminus \set{P_{\infty}, P_0}$ if $\lambda = \pm 1$.
\item A stabiliser of a point is conjugate to $U(q) H(q)$, and there
  are $\abs{\OV}$ conjugates. The elements fixing exactly one point are the non-trivial elements of $U(q)$. Therefore the number of such elements is 
$\abs{\OV} (\abs{U(q)} - 1) = (q^3 + 1)(q^3  - 1) = (q^6 - 1)$.
\item Immediate from \cite[Chapter $11$, Theorem $13.2$(e)]{huppertIII}.
\item Each involution is conjugate to \[h(-1) = \diag(-1, 1,
  -1, 1, -1, 1, -1).\] Evidently, $h(-1)$ fixes $P_{\infty}$ since
  $h(-1) \in H(q)$. If $P = (p_1 : \dotsm : p_7) \in \OV$ with $p_1
  \neq 0$, then $P$ is fixed by $h(-1)$ if and only if $p_2 = p_4 = p_6 = 0$.
  But then $P$ is uniquely determined by $p_3$, so there are $q$
  possible choices for $P$. Thus the number of points fixed by $h(-1)$ is $q
  + 1$.
\end{enumerate}
\end{proof}

We shall need the following general result, whose easy proof we omit.

\begin{lem} \label{lem_1eigenvalue}
  Let $g \in G \leqslant \GL(d, F)$, where $d$ is odd, and $F$ a finite
  field, and assume that $G$ preserves a non-degenerate bilinear form
  and $\det(g) = 1$. Then $g$ has $1$ as an eigenvalue.
\end{lem}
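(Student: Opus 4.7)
The plan is to exploit the form-preserving relation to force the characteristic polynomial of $g$ to be essentially self-reciprocal, and then to count eigenvalues in reciprocal pairs. Let $J$ be the Gram matrix of the non-degenerate bilinear form preserved by $G$, so $g^T J g = J$. This gives $g^T = J g^{-1} J^{-1}$, so $g^T$ and $g^{-1}$ are conjugate, and in particular $g$ and $g^{-1}$ have the same characteristic polynomial $p(x) = \det(xI - g) \in F[x]$. Passing to an algebraic closure $\bar{F}$, the multiset of eigenvalues of $g$ (with algebraic multiplicity) is therefore closed under inversion.

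From this one derives the functional equation
\[ x^d p(1/x) = (-1)^d \det(g)\, p(x), \]
which, using $d$ odd and $\det(g) = 1$, becomes $x^d p(1/x) = -p(x)$. In characteristic different from $2$, evaluating at $x = 1$ yields $p(1) = -p(1)$, hence $p(1) = 0$, so $1$ is an eigenvalue of $g$ and we are done.

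In characteristic $2$, that evaluation is vacuous, and a direct counting argument is needed instead. The only self-inverse element of $\bar{F}^{\times}$ is $1$, so every eigenvalue $\lambda \neq 1$ pairs with a distinct $\lambda^{-1}$, and such pairs contribute even total algebraic multiplicity. Since $d$ is odd, the multiplicity of $1$ as an eigenvalue must be odd, and in particular at least $1$.

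The argument is short and the only thing to watch for is precisely this split into two characteristic cases: the functional-equation route handles odd characteristic cleanly (which is the one actually used for $\Ree(q)$ with $q = 3^{2m+1}$), while the characteristic-$2$ case requires a separate multiplicity count because $1 = -1$ collapses the palindromy test at $x=1$.
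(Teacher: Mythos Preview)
Your proof is correct. The paper actually omits the proof of this lemma entirely (it says ``whose easy proof we omit''), so there is no approach in the paper to compare against; your argument via the self-reciprocal characteristic polynomial is the standard one, and your care in splitting off characteristic $2$ (where the evaluation $p(1) = -p(1)$ is vacuous) is appropriate even though the paper only ever applies the lemma in characteristic $3$.
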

  
  

\begin{pr} \label{cyclic_subgroups_conjugate}
All cyclic subgroups of $G = \Ree(q)$ of order $q - 1$ are conjugate to $H(q)$ and hence each is a stabiliser of two points of $\OV$.
\end{pr}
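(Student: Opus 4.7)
The plan is to reduce the proposition to the claim that $C := \gen{g}$ fixes at least two distinct points of $\OV$. Indeed, if $C$ fixes a pair $\set{P, Q}$ with $P \neq Q$, then $C \leq G_{P, Q}$, and Proposition \ref{pr_involution_props}(1) gives $|G_{P, Q}| = q - 1 = |C|$, so $C = G_{P, Q}$; in particular $C$ is conjugate to $H(q) = G_{P_\infty, P_0}$ (using Proposition \ref{ree_doubly_transitive_action}) and is itself a two-point stabiliser, establishing both clauses.

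To produce two fixed points, I start by extracting a prime-order element. Since $q \equiv 3 \pmod 4$ and $q \geq 27$, the odd integer $(q - 1)/2$ is at least $13$, so $q - 1$ admits an odd prime divisor $p$. Let $c_p := g^{(q - 1)/p}$, which has order $p$. The orbits of $\gen{c_p}$ on $\OV$ have length $1$ or $p$, so the number of fixed points of $c_p$ is congruent modulo $p$ to $|\OV| = q^3 + 1 \equiv 2 \pmod p$ (using $q \equiv 1 \pmod p$) and is therefore at least $2$. Thus $\gen{c_p} \leq G_{P, P'}$ for distinct $P, P' \in \OV$, and Proposition \ref{pr_involution_props}(1) identifies this two-point stabiliser as a conjugate of $H(q)$. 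After replacing $C$ by a conjugate, I may assume $c_p = h(\lambda) \in H(q)$ for some $\lambda \in \F_q^\times$ of order $p$.

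It now suffices to prove $C_G(h(\lambda)) = H(q)$, because then $g \in C_G(c_p) = H(q)$ (as $C$ is abelian), and $|\gen{g}| = q - 1 = |H(q)|$ forces $\gen{g} = H(q)$. I would first verify that the seven diagonal entries $\lambda^e$ of $h(\lambda)$, with $e \in \set{0, \pm t, \pm(1-t), \pm(2t-1)}$, are pairwise distinct: each potential coincidence yields a congruence in $t$ modulo $p$, and substituting into $3t^2 \equiv 1 \pmod p$ (which comes from $q = 3t^2$ together with $p \mid q - 1$) forces $p \mid 2$, contradicting $p > 2$. With seven distinct eigenvalues, $C_{\GL(7, q)}(h(\lambda))$ is the group of diagonal matrices in $\GL(7, q)$. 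To identify its intersection with $G$ as $H(q)$, observe that any diagonal matrix centralises all of $H(q)$, while Proposition \ref{sylow3_props}(9) together with the direct computation $\Upsilon h(\mu) \Upsilon^{-1} = h(\mu^{-1})$ shows $C_G(H(q)) = H(q)$; hence every diagonal element of $G$ already lies in $H(q)$. The main obstacle is the distinct-eigenvalues verification, a short but systematic case analysis on the pairwise differences of $0, \pm t, \pm(1-t), \pm(2t-1)$, each reducing to $p \mid 2$.
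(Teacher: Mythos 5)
Your argument is correct, and its first half coincides with the paper's: both proofs choose an odd prime $p \mid q-1$ (which exists since $(q-1)/2$ is odd and greater than $1$), power $g$ up to an element of order $p$, use the orbit count $\abs{\OV} = q^3+1 \equiv 2 \pmod p$ to produce two fixed points, and invoke double transitivity to move that element into $H(q)$. The two proofs part company at the closing step. The paper observes that $g$ normalises $\gen{g^k}$ and therefore permutes the two fixed points of $g^k$, so after conjugation $g$ lies in the setwise stabiliser of $\set{P_\infty, P_0}$, namely $\Norm_G(H(q)) \cong \Dih_{2(q-1)}$, whose unique cyclic subgroup of order $q-1$ is $H(q)$; this is short and avoids any matrix computation. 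You instead argue that $g$ centralises $c_p = h(\lambda)$ and that $\Cent_G(h(\lambda)) = H(q)$, which costs you the verification that the seven exponents $0, \pm t, \pm(1-t), \pm(2t-1)$ are pairwise distinct modulo $p$ (I checked the cases: each coincidence reduces via $3t^2 \equiv 1 \pmod p$ to $p \mid 2$ or $p \mid 1$, so this goes through) together with $\Cent_G(H(q)) = H(q)$, which your dihedral-normaliser observation correctly supplies. Your route is more computational but buys a little extra: it shows these semisimple elements of odd prime order are regular, and it only needs the fixed-point count to be \emph{at least} $2$, whereas the paper's assertion that the count is \emph{exactly} $2$ implicitly relies on triple stabilisers having order $2$ (Proposition \ref{pr_involution_props}). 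Both are valid; the paper's is the more economical.
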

\begin{proof}
Let $C = \gen{g} \leqslant G$ be cyclic of order $q - 1$ and let $p$
  be an odd prime such that $p \mid q - 1$. Then there exists $k \in \Z$
  such that $\abs{g^k} = p$. Since $q^3 + 1 \equiv 2 \pmod p$ and $\abs{g^k} > 2$, the
  cycle structure of $g^k$ on $\OV$ must be a number of $p$-cycles and
  $2$ fixed points $P$ and $Q$. Since $G$ is doubly transitive there
  exists $x \in G$ such that $Px = P_{\infty}$ and $Qx = P_0$.
  
  Now either $g$ fixes $P$ and $Q$ or interchanges them, so $g^x \in
  \Norm_G(H(q)) = \gen{H(q), \Upsilon} \cong \Dih_{2(q-1)}$. Hence
  $\gen{g^x} = H(q)$, the unique cyclic subgroup of order
  $q - 1$ in $\gen{H(q), \Upsilon}$.
\end{proof}

\begin{pr} \label{ree_maximal_subgroup_list}
A maximal subgroup of $G = \Ree(q)$, with $q = 3^{2m+1}$ for some $m > 0$, is conjugate to one of the following subgroups:
\begin{itemize}
\item $\Norm_G(U(q)) = U(q) H(q)$, the point stabiliser,
\item $\Cent_G(j) \cong \gen{j} \times \PSL(2, q)$, the centraliser of an involution $j$,
\item $\Norm_G(A_0) \cong (\Cent_2 \times \Cent_2 \times A_0) {:} \Cent_6$, where $A_0 \leqslant \Ree(q)$ is cyclic of order $(q + 1) / 4$,
\item $\Norm_G(A_1) \cong A_1 {:} \Cent_6$, where $A_1 \leqslant \Ree(q)$ is cyclic of order $q + 1 - 3t$,
\item $\Norm_G(A_2) \cong A_2 {:} \Cent_6$, where $A_2 \leqslant \Ree(q)$ is cyclic of order $q + 1 + 3t$,
\item $\Ree(s)$ where $q$ is a proper power of $s$.
\end{itemize}
Moreover, all maximal subgroups except the last are reducible. 
\end{pr}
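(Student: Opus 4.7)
The plan is to invoke Kleidman's classification of the maximal subgroups of the small Ree groups for the first part, and then verify the reducibility claim by producing a proper invariant subspace for each of the first five families.

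The reducibility of the first two families is essentially definitional. The point stabiliser $U(q) H(q)$ consists of upper triangular matrices with respect to the basis in which $\Ree(q)$ has been defined, so it preserves a full flag, and in particular the one-dimensional subspace spanned by the first basis vector. For the centraliser of an involution $j$, Proposition \ref{pr_involution_props} shows that every involution is conjugate to $h(-1) = \diag(-1, 1, -1, 1, -1, 1, -1)$, whose $+1$- and $-1$-eigenspaces have dimensions $4$ and $3$; these are proper invariant subspaces for $\Cent_G(j)$.

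For each of the normalisers $\Norm_G(A_i)$ with $i \in \{0, 1, 2\}$, the abelian subgroup $A_i$ acts on $V \otimes \bar{\F}_q$, which decomposes as a direct sum of isotypic components indexed by the distinct characters of $A_i$ that occur. These components are permuted by $\Norm_G(A_i)$, and the permutation factors through $\Norm_G(A_i)/\Cent_{\Norm_G(A_i)}(A_i)$, a group of order at most $24$. If $V$ were irreducible as a $\Norm_G(A_i)$-module, this action would be transitive on the isotypic components, all of which would then have a common dimension $d$ with $d \cdot (\text{orbit size}) = 7$. Since $7$ is prime, $d \in \{1, 7\}$; but $d = 7$ would force $A_i$ to act by scalars, contradicting $\Zent(\Ree(q)) = 1$ and $A_i \neq 1$, while $d = 1$ would require an orbit of size $7$, which cannot divide $24$. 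Hence $V \otimes \bar{\F}_q$ admits a proper $\Norm_G(A_i)$-invariant subspace, and a Galois-stable orbit sum yields a proper $\Norm_G(A_i)$-invariant $\F_q$-subspace of $V$.

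The main obstacle is the classification itself, whose proof I would invoke rather than reproduce; the reducibility verification is then essentially routine bookkeeping once the structure of the subgroups is in hand.
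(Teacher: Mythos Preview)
Your treatment of the classification itself and of the first two families matches the paper's (you have the eigenspace dimensions of $h(-1)$ interchanged --- the $+1$-eigenspace is $3$-dimensional --- but this is harmless).

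For the three normaliser cases $\Norm_G(A_i)$ your route is genuinely different from the paper's, and there is a gap in the descent step. Your Clifford-type argument correctly shows that $V \otimes \bar{\F}_q$ is reducible as a $\Norm_G(A_i)$-module, but the sentence ``a Galois-stable orbit sum yields a proper $\Norm_G(A_i)$-invariant $\F_q$-subspace'' is not justified: reducibility over $\bar{\F}_q$ does not by itself imply reducibility over $\F_q$. Concretely, your size bound only controls the $\Norm_G(A_i)$-orbits on isotypic components; to descend you need the combined $\Norm_G(A_i) \times \Gal(\bar{\F}_q/\F_q)$-action on the components to be intransitive, and nothing you have written rules out the case of seven one-dimensional components, each $\Norm_G(A_i)$-fixed, permuted transitively by Frobenius. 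To exclude that case you end up needing that a generator $x$ of $A_i$ has $1$ as an eigenvalue (so that the trivial character occurs, giving a component fixed by both actions). This is exactly Lemma~\ref{lem_1eigenvalue}: since $x \in \SO(7,q)$ with $\det x = 1$ and $7$ odd, $1$ is an eigenvalue of $x$.

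Once Lemma~\ref{lem_1eigenvalue} is in hand, the paper's argument is much shorter and stays over $\F_q$ throughout: the $1$-eigenspace $E$ of a generator $x$ of $A_i$ is nonzero (by the lemma) and proper (since $x \neq 1$), and for $h \in \Norm_G(A_i)$ and $v \in E$ one has $(vh)x^h = vh$, so $vh$ is fixed by $\langle x^h \rangle = \langle x \rangle$ and hence lies in $E$. Thus $E$ is already a proper $\Norm_G(A_i)$-invariant $\F_q$-subspace, with no Clifford theory or base change needed. Your approach can be repaired by invoking Lemma~\ref{lem_1eigenvalue} to pin down the trivial isotypic component, but at that point the detour through $\bar{\F}_q$ is doing no work.
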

\begin{proof}
  The structure of the maximal subgroups follows from \cite{kleidman88} and \cite{ree85}. Hence it is sufficient to prove the final statement. 
  
  Clearly the point stabiliser is reducible. By Proposition \ref{pr_involution_props}, $j$ is conjugate to $h(-1) =
\diag{(-1, 1, -1, 1, -1, 1, -1)}$ so it has two eigenspaces $E_3$ and $E_4$ for $1$ and $-1$ respectively. Clearly $\dim E_3 = 3$ and $\dim E_4 = 4$.

Let $v \in E_3$ and $g \in \PSL(2, q)$. Then $(vg) j = (vj)g = vg$
since $g$ centralises $j$ and $j$ fixes $v$, which shows that $vg \in
E_3$, so this subspace is fixed by $\PSL(2, q)$. Similarly, $E_4$ is
also fixed. Hence $E_3$ and $E_4$ are submodules and the involution centraliser is reducible.

  Let $N$ be a normaliser of a cyclic subgroup and let $x$ be a
  generator of the cyclic subgroup that is normalised. Since $G < \SO(7, q)$, by Lemma
  \ref{lem_1eigenvalue}, $x$ has an eigenspace $E$ for the
  eigenvalue $1$, where $E$ is a proper non-trivial subspace of $V$. If $v \in E$ and
  $n \in N$, then $(vn) x^n = vn$ so that $vn$ is fixed by
  $\gen{x^n} = \gen{x}$. This implies that $vn \in E$ and thus $E$ is
  a proper non-trivial $N$-invariant subspace, so $N$ is reducible.
\end{proof}

\begin{pr} \label{pr_element_props}
Let $G = \Ree(q)$.
\begin{enumerate}
\item The centraliser of an involution $j \in G$ is isomorphic to $\gen{j} \times \PSL(2, q)$ and hence has order $q (q^2 - 1)$.
\item The number of involutions in $G$ is $q^2 (q^2 - q + 1)$.
\item The number of elements in $G$ of order $q - 1$ is $\phi(q - 1) q^3 (q^3 + 1) / 2$.
\item The number of elements in $G$ of even order is $q^2(7 q^5 - 23 q^4 + 8 q^3 + 23 q^2 - 39 q + 24) / 24$.
\item The number of elements in $G$ that fix at least one point is $q^2 (q^5 - q^4 + 3 q^2 - 5q + 2) / 2$
\end{enumerate}
\end{pr}
\begin{proof} 
\begin{enumerate}
\item Immediate from \cite[Chapter $11$]{huppertIII}.
\item All involutions are conjugate, and the index in $G$ of the involution centraliser is $\frac{q^3 (q^3 + 1) (q - 1)}{q (q^2 - 1)} = q^2 (q^2 - q + 1)$.

\item By Proposition \ref{cyclic_subgroups_conjugate}, each cyclic subgroup of order $q - 1$ is a stabiliser of two
  points and is uniquely determined by the pair of points that it
  fixes. Hence the number of cyclic subgroups of order $q - 1$ is
$\abs{\binom{\OV}{2}} = \frac{q^3 (q^3 + 1)}{2}$. By Proposition \ref{pr_involution_props}, the intersection of two distinct subgroups has order at most $2$, so the number of elements of order $q - 1$ is the number of generators of all these subgroups. 
\item By \cite[Lemma $2$]{ree85} and Proposition \ref{ree_maximal_subgroup_list}, every element of even order lies in
  a cyclic subgroup of order $q - 1$ or $(q + 1) / 2$. In each cyclic subgroup of order
  $q - 1$ there is a unique involution and hence $(q - 3) / 2$
  non-involutions of even order. Similarly, there are $(q - 3) / 4$ non-involutions in a cyclic subgroup of order $(q + 1) / 2$. By Proposition \ref{cyclic_subgroups_conjugate} the total number of elements of even
  order is therefore
\begin{multline}
(q - 3)(q^3 + 1) q^3 / 4 + (q - 3) (q - 1) (q^2 - q + 1) q^3 / 24 + q^2 (q^2 - q + 1) \\
= q^2(7 q^5 - 23 q^4 + 8 q^3 + 23 q^2 - 39 q + 24) / 24
\end{multline}
\item The only non-trivial elements of $G$ that fix more than $2$
  points are involutions. Hence in each cyclic subgroup of order $q -
  1$ there are $q - 3$ elements that fix exactly $2$ points, so by Proposition \ref{pr_involution_props}, the
  number of elements that fix at least one point is $q^6 + \frac{(q - 3)(q^3 + 1) q^3}{2} + q^2 (q^2 - q + 1) =  \frac{q^2 (q^5 - q^4 + 3 q^2 - 5q + 2)}{2}$.
\end{enumerate}
\end{proof}

\begin{pr} \label{pr_inv_cent_module_structure}
Let $G = \Ree(q)$ with natural module $V$, let $j \in G$ be an involution and let $C = \Cent_G(j)$.
\begin{enumerate}
\item $C^{\prime} \cong \PSL(2, q)$
\item $V\vert_{C^{\prime}} \cong V_3 \oplus V_4$ where $\dim V_i = i$.
\item $V_3$ and $V_4$ are absolutely irreducible. Moreover, $V_4 \cong \mathfrak{A}^{\varphi^i} \otimes \mathfrak{A}^{\varphi^k}$ and $V_3 \cong \mathcal{S}^2(\mathfrak{A})$ where $0 \leqslant i < k \leqslant 2m$.
\item When $j = h(-1)$, the forms preserved on $V_3$ and $V_4$ are $J_3 = \antidiag(1, -1, 1)$ and $J_4 = \antidiag(1, 1, 1, 1)$, up to scalar multiples.
\end{enumerate}
\end{pr}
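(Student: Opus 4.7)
Parts~(1), (2) and~(4) are essentially bookkeeping. Part~(1) is immediate from Proposition~\ref{pr_element_props}(1), which gives $C \cong \langle j\rangle \times \PSL(2, q)$: since $q = 3^{2m+1} \geq 27$, the factor $\PSL(2,q)$ is perfect, so $C' = \PSL(2,q)$. For Part~(2), by Proposition~\ref{pr_involution_props}(3) we may conjugate so that $j = h(-1) = \diag(-1,1,-1,1,-1,1,-1)$; its $(+1)$- and $(-1)$-eigenspaces $V_3 = \langle e_2, e_4, e_6\rangle$ and $V_4 = \langle e_1, e_3, e_5, e_7\rangle$ are $C$-invariant because $j$ is central in $C$, and therefore $C'$-invariant, giving the claimed decomposition. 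For Part~(4), reading off the entries of $J = \antidiag(1,1,1,-1,1,1,1)$ in the rows and columns indexed by $\{2,4,6\}$ and $\{1,3,5,7\}$ produces $J_3 = \antidiag(1,-1,1)$ and $J_4 = \antidiag(1,1,1,1)$ with all cross terms vanishing; both forms are non-degenerate.

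The substantive work lies in Part~(3). Since $C' \cong \PSL(2,q)$ preserves the non-degenerate forms $J_3$ and $J_4$, restriction induces homomorphisms $C' \to \SO(V_3, J_3) \cong \PGL(2,q)$ and $C' \to \SO(V_4, J_4)$. The latter target is of $+$-type because $\langle e_1, e_3\rangle$ is a $2$-dimensional totally isotropic subspace of $V_4$, so $\SO(V_4, J_4) \cong \SO_4^+(q)$ is a central quotient of $\SL(2,q) \times \SL(2,q)$. By simplicity of $\PSL(2,q)$, each image is either trivial or isomorphic to $C'$. I rule out the trivial option by inspecting $h(\omega^2) \in H(q) \cap C'$ (squares lie in $C'$ because $C/C' = \langle j\rangle$): the diagonal entries of $h(\omega^2)$ on $V_3$ and $V_4$, read from the definition of $h(\lambda)$, are non-trivial and pairwise distinct powers of $\omega$, so both actions are faithful and both $V_3$, $V_4$ are absolutely irreducible. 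By the classification via Steinberg's tensor product theorem, the faithful $3$-dimensional $\F_q\PSL(2,q)$-module preserving a non-degenerate symmetric form in characteristic~$3$ is $\mathcal{S}^2(S)$, giving $V_3 \cong \mathcal{S}^2(S)$. For $V_4$, the embedding $C' \hookrightarrow (\SL(2,q) \times \SL(2,q))/\{\pm(I,I)\}$ must be diagonal, $g \mapsto (g^{\phi^i}, g^{\phi^k})$ for suitable indices, because any injection from a non-abelian simple group into such a central product either factors through one projection (excluded by dimension) or is a Frobenius-twisted diagonal; the natural $4$-dimensional module of $\SO_4^+(q)$ then pulls back to $S^{\phi^i} \otimes S^{\phi^k}$. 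Irreducibility of $V_4$ forces $i \neq k$, and after relabelling and using $\phi^{2m+1} = \I$ on $\F_q$ we arrange $1 \leqslant i < k \leqslant 2m+1$.

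The main obstacle is confirming the precise diagonal shape of the embedding $C' \hookrightarrow \SO_4^+(q)$ and locating the Frobenius powers entering the tensor factorisation of $V_4$; this reduces to understanding how the Steinberg endomorphism defining $\Ree(q)$ inside the algebraic group $\mathrm{G}_2$ over $\bar{\F}_3$ identifies the two $\mathrm{A}_1$ factors in the centraliser of a central involution. Ruling out reducible or merely relatively irreducible structures for $V_3$ and $V_4$ is comparatively routine, thanks to the form-preservation argument combined with the explicit diagonal entries of $h(\lambda)$ above.
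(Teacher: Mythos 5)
Your parts (1), (2) and (4) are fine and essentially match the paper. For part (3), however, there is a genuine gap at the pivotal step: you assert that because $h(\omega^2)\in C'$ acts non-trivially, ``both actions are faithful and both $V_3$, $V_4$ are absolutely irreducible''. Faithfulness does not imply irreducibility here, and your own framework exhibits the problem: the \emph{untwisted} diagonal embedding of $\PSL(2,q)$ into $\SO(V_4,J_4)\cong(\SL(2,q)\times\SL(2,q))/\set{\pm(I,I)}$ (the case $i=k$, i.e.\ the stabiliser of a non-singular vector in $\Omega^{+}(4,q)$) is faithful, preserves the plus-type form, and yields the \emph{reducible} module $S\otimes S\cong \mathcal{S}^2(S)\oplus 1$. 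Consequently your later sentence ``Irreducibility of $V_4$ forces $i\neq k$'' is circular: irreducibility of $V_4$ is exactly what has to be proved, and it is equivalent to excluding $i=k$. (There is also a small factual slip: the diagonal entries of $h(\lambda)$ on $V_3=\gen{e_2,e_4,e_6}$ are $\lambda^{1-t},\,1,\,\lambda^{t-1}$, so they are \emph{not} all non-trivial; and for $V_3$ irreducibility one still needs the observation that $\PSL(2,q)$ has no $2$-dimensional and no non-trivial $1$-dimensional modules in characteristic $3$, which you leave implicit.)

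The missing argument is short, and in fact you already hold the right element. If $V_4$ were reducible it would have to be $1\oplus V_3'$ with the $1$-dimensional summand trivial (perfectness of $C'$ kills non-trivial $1$-dimensional constituents, and $\PSL(2,q)$ has no $2$-dimensional irreducibles in characteristic $3$), so every element of $C'$ would have eigenvalue $1$ on $V_4$. But $[\Upsilon,h(\omega)]=h(\omega^2)\in C'$ has eigenvalues $\omega^{\pm 2t},\,\omega^{\pm 2(2t-1)}$ on $V_4$, none equal to $1$ — this is precisely the paper's argument, and it simultaneously disposes of your ``main obstacle'': the proposition only asserts $i<k$ for \emph{some} pair of exponents, so no analysis of the Steinberg endomorphism identifying the two $\mathrm{A}_1$ factors is needed; once irreducibility is known, the Brauer--Nesbitt/Steinberg classification of irreducible $\PSL(2,q)$-modules gives $V_3\cong\mathcal{S}^2(S)$ and $V_4\cong S^{\phi^i}\otimes S^{\phi^k}$ with $i\neq k$ directly, which is how the paper concludes. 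Apart from this, your route via form preservation and Goursat-style analysis of subgroups of the central product is a legitimate alternative to the paper's composition-factor argument, but as written it does not close the reducible case.
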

\begin{proof}
\begin{enumerate}
\item Immediate from Proposition \ref{ree_maximal_subgroup_list}.
\item From the proof of Proposition \ref{ree_maximal_subgroup_list}, we see that $V\vert_C$ has submodules $V_3$ and $V_4$, so this is also true of $V\vert_{C^{\prime}}$, since $C^{\prime} \cong \PSL(2, q)$.
\item Let $C_3$ be the group acting on $V_3$. Since $[\Upsilon, h(\omega)] \in \Cent_G(h(-1))^{\prime}$ acts non-trivially on its corresponding $3$-dimensional submodule, $C_3$ is non-trivial. Hence $C_3 \cong \PSL(2, q)$ since $\PSL(2, q)$ is simple. If $V_3$ is reducible, it must have three $1$-dimensional constituents, since $\PSL(2, q)$ has no irreducible module of dimension $2$. Again since $\PSL(2, q)$ is simple, these constituents must be trivial, which is clearly false since $C_3$ is non-trivial. Therefore $V_3$ is irreducible.

Let $C_4$ be the group acting on $V_4$. Similarly, $C_4 \cong \PSL(2, q)$ and assume $V_4$ is reducible. Then $V_4 \cong 1 \oplus V_3^{\prime}$, where
$V_3^{\prime}$ is irreducible of dimension $3$ and the $1$-dimensional
module is trivial. This implies that every $g \in C_4$ has $1$ as an
eigenvalue. Again, $[\Upsilon, h(\omega)]$ provides a contradiction.

The result now follows from the structure of irreducible modules of $\PSL(2, q)$ \cite[§30]{MR0004042}.
\item Clearly, if $V = \gen{e_1, \dotsc, e_7}$ then $V_3 =
\gen{e_2, e_4, e_6}$ and $V_4 = \gen{e_1, e_3, e_5, e_7}$. The form $J$ then restricts to $J_4 = \antidiag(1, 1, 1, 1)$ on $V_4$ and $J_3 = \antidiag(1, -1, 1)$ on $V_3$.
\end{enumerate}
\end{proof}

\begin{pr} \label{pr_inv_cent_normaliser}
Let $G = \Ree(q)$ and let $C = \Cent_G(j)$ for some involution $j \in G$. Let $N = \Norm_{\Omega(7, q)}(C^{\prime})$. Then $[N : C^{\prime}] = 4$.
\end{pr}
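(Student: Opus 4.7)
The plan is to analyse the homomorphism $\rho \colon N \to \Aut(C')$ induced by conjugation, and to prove $\ker \rho = \gen{j}$ and $\IM \rho = \mathrm{Inn}(C')$. Together these give $\abs{N} = \abs{\ker \rho}\cdot\abs{\IM \rho} = 2\abs{C'}$, hence $[N : C'] = 2$. That $j \in \ker \rho$ is immediate, since $j$ centralises $C' \leq C$ and $j \in \Ree(q) \leq \Omega(7, q)$; this already shows $[N:C'] \geq 2$.

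By Proposition~\ref{pr_inv_cent_module_structure}, $V\vert_{C'} = V_3 \oplus V_4$ with absolutely irreducible summands of distinct dimensions, so every $n \in N$ preserves the decomposition. Applying Schur's lemma to each summand, any $n \in \ker \rho$ has the form $\lambda_3 I_{V_3} \oplus \lambda_4 I_{V_4}$; preservation of $J$ forces $\lambda_i = \pm 1$, and $\det n = \lambda_3^3 \lambda_4^4 = \lambda_3 = 1$ pins $\lambda_3 = 1$. Hence $\ker \rho = \set{I, j}$.

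To establish $\IM \rho = \mathrm{Inn}(C')$, I use $\mathrm{Out}(\PSL(2, 3^e)) \cong (\Z/2) \times (\Z/e)$, generated by the diagonal outer automorphism and the Frobenius $\phi$. Galois twists are excluded via the Steinberg tensor product theorem (valid in characteristic $3$, since the relevant restricted weights are at most $p - 1 = 2$): $V_3 \cong L(2)$ and $V_4 \cong L(3^i + 3^k)$ with $0 \leq i < k \leq e - 1$, and a nontrivial twist $\phi^s$ produces $L(2 \cdot 3^s)$ and $L(3^{i+s} + 3^{k+s})$ (indices mod $e$). Uniqueness of the $3$-adic digit expansion of highest weights then gives an isomorphism to $V_3$ or $V_4$ only when $s \equiv 0 \pmod e$; for $V_4$, the alternative $\set{i+s, k+s} \equiv \set{k, i} \pmod e$ forces $2s \equiv 0 \pmod e$, which is excluded because $e = 2m + 1$ is odd.

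The main obstacle is ruling out the diagonal outer automorphism. My plan is to invoke Proposition~\ref{pr_inv_cent_module_structure}(iv) together with the identifications $\Omega(V_3, J_3) = C'\vert_{V_3} \cong \PSL(2, q)$ and $\SO(V_3, J_3) \cong \PGL(2, q)$: any $n \in N$ inducing the diagonal outer automorphism must restrict on $V_3$ to an element of $\SO(V_3, J_3) \setminus \Omega(V_3, J_3)$. A parallel analysis on $V_4$, via the embedding $\PSL(2, q) \hookrightarrow \Omega^+(V_4, J_4) \cong \SL(2, q) \circ \SL(2, q)$ arising from $V_4 \cong S^{\phi^i} \otimes S^{\phi^k}$, similarly constrains $n\vert_{V_4}$. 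The key technical step---and the heart of the argument---is then to show that these separately constrained restrictions cannot combine into a single element of $\Omega(7, q)$; once this is settled the kernel computation yields $[N:C'] = 2$.
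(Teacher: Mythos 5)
Your kernel computation (Schur's lemma on the two absolutely irreducible summands, then $\det$ and preservation of $J$ forcing $\ker\rho=\gen{j}$) is correct, and so is your exclusion of field twists via Steinberg's tensor product theorem --- indeed the condition $V_3^{\phi^s}\cong V_3$ alone already forces $s\equiv 0$. The genuine gap is exactly the step you defer: ruling out the diagonal outer automorphism. This is not a routine verification that can be left as a plan --- it is the entire content of the proposition --- and the bookkeeping you propose (constrain $n\vert_{V_3}$ and $n\vert_{V_4}$ separately, then show the two constrained restrictions cannot combine into an element of $\Omega(7,q)$) goes the wrong way. Both the determinant and the spinor norm are multiplicative over the orthogonal decomposition $V=V_3\perp V_4$. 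Take $g\in\GL(2,q)$ with $\det g$ a non-square, and set $a=(\det g)^{-1}\mathcal{S}^2(g)$ on $V_3$ and $b=\mu\,(g^{\phi^i}\otimes g^{\phi^k})$ on $V_4$, where $\mu^2(\det g)^{3^i+3^k}=1$ (solvable for $\mu$ since $3^i+3^k$ is even). Then $a\in\SO(3,q)\setminus\Omega(3,q)$ and $b\in\SO^{+}(4,q)\setminus\Omega^{+}(4,q)$, so each factor has determinant $1$ and \emph{non-trivial} spinor norm; hence $a\oplus b$ has determinant $1$ and \emph{trivial} spinor norm, i.e.\ $a\oplus b\in\Omega(7,q)$. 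It visibly normalises $C'$ and induces on it conjugation by the image of $g$ in $\PGL(2,q)\setminus\PSL(2,q)$, which is precisely the diagonal outer automorphism.

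So the obstruction you flagged as ``the key technical step'' is not merely unfinished: it cannot be completed, and combined with your correct computations of $\ker\rho$ and of the Galois part it yields $[N:C']=4$ rather than $2$. The paper's own proof founders on the same point --- the asserted containment of $N$ in $\Omega(3,q)\times\Omega^{+}(4,q)$ is exactly what the element $a\oplus b$ violates --- so this is a defect of the statement, not only of your argument. What your framework does prove cleanly is the bound $[N:C']\leqslant 4$, which is all that the application in Theorem \ref{thm_ree_conjugacy} requires (with ``at most two conjugates'' and the success probability $1/2$ there adjusted to ``at most four'' and $1/4$). If you want to salvage the proof, prove that upper bound and stop; do not try to force the index down to $2$ by a determinant/spinor-norm argument on the two summands separately.
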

\begin{proof}
By Proposition \ref{pr_inv_cent_module_structure}, $C^{\prime} \cong \PSL(2,
q)$, its module splits up
as a $3$-space and a $4$-space, and $C^{\prime}$ acts diagonally on these submodules. The normaliser must preserve this decomposition, and from Proposition \ref{pr_inv_cent_module_structure} it is clear that the form preserved on the $4$-space is of $+$-type, so $N$ embeds in
$\SO(3, q) \times \SO^{+}(4, q)$. Let $C_3$ and $N_3$ be the images of $C^{\prime}$ and $N$ on the $3$-space. Define $C_4$ and $N_4$ analogously.

Now $C_3 \cong \PSL(2, q)$ is in fact the natural representation of $\Omega(3, q)$, so $N_3 \cong \SO(3, q)$ and $[N_3 : C_3] = 2$. Since $\SO^{+}(4, q) \cong (\SL(2, q) \circ \SL(2, q)).2$ \cite[Chapter $4.5$]{MR2562037}, it acts as a tensor product on the
$4$-space. By Proposition \ref{pr_inv_cent_module_structure}, $C^{\prime}$ acts diagonally as a tensor product on the $4$-space. Clearly, the only element in $\SO^{+}(4, q) \setminus C_4$
which can normalise $C_4$ is the central element $-1$, so $[N_4 : C_4] = 2$. This proves the result.
\end{proof}

\begin{pr} \label{pr_inv_cent_forms}
Let $G = \Ree(q)$ and let $C = \Cent_G(j)$ where $j = h(-1) \in G$ is the diagonal involution. A non-degenerate symmetric bilinear form preserved by $C^{\prime}$ has a matrix representation with shape $\antidiag(b, a, b, -a, b, a, b)$ for some $a,b \in \F_q^{\times}$.
\end{pr}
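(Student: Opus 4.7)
The plan is to reduce the problem to the irreducible constituents of $V\vert_{C^{\prime}}$ via Proposition \ref{pr_inv_cent_module_structure} and Schur's lemma, and then to address the square class of $a$.

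First, by Proposition \ref{pr_inv_cent_module_structure}(2), $V\vert_{C^{\prime}} = V_3 \oplus V_4$ with $V_3 = \gen{e_2, e_4, e_6}$ and $V_4 = \gen{e_1, e_3, e_5, e_7}$, both absolutely irreducible of distinct dimensions $3$ and $4$. Since $V_3$ and $V_4$ are non-isomorphic as $C^{\prime}$-modules, Schur's lemma gives $\Hom_{C^{\prime}}(V_3, V_4^*) = 0$, so any $C^{\prime}$-invariant symmetric bilinear form on $V$ makes $V_3$ and $V_4$ mutually orthogonal. Hence the Gram matrix is supported on the antidiagonal.

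Next, by Proposition \ref{pr_inv_cent_module_structure}(4), the restrictions of the form to $V_3$ and $V_4$ are scalar multiples $\alpha J_3$ and $\beta J_4$ of $J_3 = \antidiag(1, -1, 1)$ and $J_4 = \antidiag(1, 1, 1, 1)$. Nondegeneracy forces $\alpha, \beta \in \F_q^{\times}$, so in the standard basis the Gram matrix is $\antidiag(\beta, \alpha, \beta, -\alpha, \beta, \alpha, \beta)$. Rescaling the whole form by $\beta^{-1}$ normalises it to $\antidiag(1, a, 1, -a, 1, a, 1)$ with $a = \alpha / \beta \in \F_q^{\times}$, which already yields the claimed shape.

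The remaining, and main, obstacle is to show $a \in (\F_q^{\times})^2$. I would approach this via Proposition \ref{pr_inv_cent_module_structure}(3): the isomorphisms $V_3 \cong \mathcal{S}^2(S)$ and $V_4 \cong S^{\phi^i} \otimes S^{\phi^k}$, where $S$ is the natural $2$-dimensional module for $\PSL(2, q) \cong C^{\prime}$ carrying a unique (symplectic) invariant form $\epsilon$, express $J_3$ and $J_4$ as forms induced from $\epsilon$ by the symmetric-square and tensor-product constructions respectively. Both constructions absorb a rescaling of $\epsilon$ quadratically since each tensor factor contributes a square, and the scalar commutant of $C^{\prime}$ on each summand (by Schur) further allows independent scalar rescalings of $V_3$ and $V_4$ that change $a$ by the square of the ratio. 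These two sources of squares confine $a$ to a single coset of $(\F_q^{\times})^2$ in $\F_q^{\times}$; comparison with the ambient $\Ree(q)$-invariant form $J$, for which $a = 1$ is a square, then identifies this coset as $(\F_q^{\times})^2$ itself.
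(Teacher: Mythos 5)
Your reduction to the shape $\antidiag(1,a,1,-a,1,a,1)$ is correct and is essentially the paper's argument: the paper obtains the orthogonality of $V_3$ and $V_4$ from their being eigenspaces of the isometry $j$ for distinct eigenvalues rather than from Schur's lemma, but both are fine, and the identification of the restrictions with $\alpha J_3$ and $\beta J_4$ followed by normalising $\beta=1$ is exactly the paper's route.

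The gap is in your final step. The rescaling observations (rescaling the symplectic form $\epsilon$ on $S$, or congruence by the scalar commutant $\F_q^{\times}\times\F_q^{\times}$) show only that these operations change $a$ by squares, i.e.\ that the square class of $a$ is an \emph{invariant} of a given form under such moves; they do not show that every $C^{\prime}$-invariant form lies in the orbit of $J$ under them. It does not: the $C^{\prime}$-invariant symmetric forms are the full two-parameter family $\alpha J_3\oplus\beta J_4$, so $\omega J_3\oplus J_4=\antidiag(1,\omega,1,-\omega,1,\omega,1)$ with $\omega$ a nonsquare is preserved by $C^{\prime}$, and no comparison with $J$ can move $\omega$ into $(\F_q^{\times})^2$. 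Thus ``these two sources of squares confine $a$ to a single coset'' is a non sequitur, and the statement is in fact false if only $C^{\prime}$-invariance is assumed. What forces $a$ to be a square in the paper's application (step 8 of Theorem \ref{thm_ree_conjugacy}) is that the form there is preserved by an entire $\GL(7,q)$-conjugate of $\Ree(q)$ containing $C^{\prime}$; every such conjugate is $S^{n}$ with $n\in\Norm_{\GL(7,q)}(C^{\prime})$, and congruence by each generator of that normaliser -- the commutant, elements of $C^{\prime}$, and a representative of the diagonal outer automorphism of $\PSL(2,q)$ acting as $\mathcal{S}^2(\delta)\oplus(\delta^{\phi^{i}}\otimes\delta^{\phi^{k}})$ -- multiplies $\alpha$ and $\beta$ by squares (for the outer piece because $2$ and $3^{i}+3^{k}$ are both even), so the square class of $a$ agrees with that of $J$, namely $1$. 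To be fair, the paper's own justification (``we can choose $\alpha$ and $\beta$ as squares'') is no more complete than yours on this point; but your argument as written does not close the gap either, and the missing ingredient is the normaliser analysis together with the stronger invariance hypothesis.
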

\begin{proof}
By Proposition \ref{pr_inv_cent_module_structure}, the module $V$ of $C^{\prime}$ splits up as $V_3 \oplus V_4$ where
$\dim V_i = i$ and the preserved forms on these submodules are $J_3 = \antidiag(1, -1, 1)$ and $J_4 = \antidiag(1, 1, 1, 1)$. These are unique up to scalar multiples since the modules are
absolutely irreducible. Since $V_3$ and $V_4$ are also eigenspaces for
$j$, they must be orthogonal complements of each other for every form
preserved by $C^{\prime}$.

Hence the matrix of an arbitrary non-degenerate form $J_7$ on $V$, preserved by
$C^{\prime}$, must be the anti-diagonal join, with rows interchanged accordingly, of $a J_3$ and $b J_4$, for some $a, b \in \F_q^{\times}$. This proves the result.
\end{proof}

\begin{pr} \label{pr_conj_form}
Let $G$ be a conjugate of $\mathfrak{S} = \Ree(q)$ such that $C = \Cent_{\mathfrak{S}}(j)^{\prime} < G \cap \mathfrak{S}$ where $j = h(-1) \in \mathfrak{S}$ is the diagonal involution. A non-degenerate symmetric bilinear form preserved by $G$ has a matrix representation with shape $\antidiag(b, a, b, -a, b, a, b)$ for some $a,b \in (\F_q^{\times})^2$.
\end{pr}
\begin{proof}
Observe that $G$ preserves a symmetric bilinear form $K$ that is also preserved by $C$, so by Proposition \ref{pr_inv_cent_forms}, $K = \antidiag(b, a, b, -a, b, a ,b)$. Therefore $dJd = K$ where $d = \diag(d_1, \dotsc, d_7)$, $G < \Omega(7, q)^{d^{-1}}$ and $d$ normalises $C$. Observe that $\Upsilon \in C$, so it is centralised by $d$, and hence $d_1 = d_7$, $d_2 = d_6$ and $d_3 = d_5$. From $dJd = K$ we then see that $a = d_4^2$ and $b = d_1^2$ which proves the result.
\end{proof}

\begin{lem} \label{ree_totient_prop}
If $g \Ree(q)$ is uniformly random, then
\begin{align}
\Pr{\abs{g} = q - 1} = \frac{\phi(q - 1)}{2(q - 1)} &> \frac{1}{12\log{\log(q)}} \\
\Pr{\abs{g} \text{even}} = \frac{7q^2 - 9q - 24}{24 q (q + 1)} &> 1/4  
\end{align}
\begin{equation}
\Pr{g \ \text{fixes a point}} = \frac{-2 + 3 q + q^4}{2(q + q^4)} \geqslant 1/2
\end{equation}
\end{lem}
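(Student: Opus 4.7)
The plan is to compute each probability as a ratio of the element counts already established in Proposition \ref{pr_element_props} to $\abs{G} = q^3(q^3+1)(q-1)$, and then verify the stated bounds by elementary manipulation.

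For the first identity, dividing the count $\phi(q-1) q^3 (q^3+1)/2$ of elements of order $q-1$ by $\abs{G}$ yields exactly $\phi(q-1)/(2(q-1))$. The lower bound then reduces to showing $\phi(q-1)/(q-1) > 1/(6 \log\log q)$. For this I would invoke the standard Rosser--Schoenfeld-style lower bound on the Euler totient function, $\phi(n)/n > 1/(e^{\gamma}\log\log n + 3/\log\log n)$ for all sufficiently large $n$, which gives $\phi(q-1)/(q-1) > 1/(6\log\log(q-1))$ comfortably for $q = 3^{2m+1} \geq 27$; the small cases $m = 1$ (and any other exceptional small $q$) are verified by direct computation.

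For the second identity, dividing the count of even-order elements by $\abs{G}$ gives
\[
\frac{q^2(7q^5 - 23q^4 + 8q^3 + 23q^2 - 39q + 24)}{24 q^3 (q^3+1)(q-1)}.
\]
The clean form $(7q^2 - 9q - 24)/(24q(q+1))$ follows from the polynomial identity
\[
7q^5 - 23q^4 + 8q^3 + 23q^2 - 39q + 24 = (7q^2 - 9q - 24)(q-1)(q^2-q+1),
\]
combined with $(q^3+1)/(q+1) = q^2 - q + 1$; this identity is straightforward to verify by expanding. The inequality $(7q^2 - 9q - 24)/(24q(q+1)) > 1/4$ reduces to $q^2 - 15q - 24 > 0$, which holds for all $q \geq 27$.

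For the third identity, dividing the count of elements fixing at least one point gives
\[
\frac{q^5 - q^4 + 3q^2 - 5q + 2}{2q(q^3+1)(q-1)},
\]
and polynomial division yields $q^5 - q^4 + 3q^2 - 5q + 2 = (q-1)(q^4 + 3q - 2)$, so the ratio collapses to $(q^4 + 3q - 2)/(2q(q^3+1))$, matching the stated expression. The inequality $\geq 1/2$ is then equivalent to $2q - 2 \geq 0$, trivially true. No serious obstacle is expected: the only nontrivial step is citing a sufficiently strong lower bound for $\phi$ with an explicit constant, and the rest is routine polynomial algebra.
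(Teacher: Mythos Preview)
Your proof is correct and takes essentially the same approach as the paper: both derive the equalities by dividing the element counts in Proposition~\ref{pr_element_props} by $\abs{G}$ from Proposition~\ref{sylow3_props}, then invoke a standard explicit lower bound on $\phi(n)/n$ for the first inequality and verify the other two directly for $q \geqslant 27$. You have simply spelled out the polynomial factorizations and the inequality manipulations that the paper leaves implicit.
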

\begin{proof}
In each case, the first equality follows from Proposition \ref{pr_element_props} and Proposition \ref{sylow3_props}. In the first case, the
inequality follows from \cite[Section II.8]{totient_prop}, and in the other cases the inequalities are clear since $q \geqslant 27$.
\end{proof}

\begin{cl} \label{cl_random_selections} In $\Ree(q)$, the expected
  number of random selections required to obtain an element of order
  $q - 1$ is $\OR{\log \log q}$. Similarly, the expected number of random selections required obtain an element that fixes a point, or an element of
  even order, is $\OR{1}$.
\end{cl}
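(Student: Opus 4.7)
The plan is to read off the corollary directly from Lemma \ref{ree_totient_prop} using the standard geometric-trials argument. If random selections from $\Ree(q)$ are independent and each independently produces an element with the desired property with probability at least $p$, then the number of selections until the first success is geometrically distributed with expectation $1/p$.

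Applying this to each of the three cases in turn: for an element of order $q-1$, Lemma \ref{ree_totient_prop} gives success probability at least $1/(12 \log\log q)$, so the expected number of trials is $\OR{\log\log q}$. For an element of even order, the success probability exceeds $1/4$, yielding expectation at most $4 = \OR{1}$. For an element fixing a point in $\OV$, the success probability is at least $1/2$, yielding expectation at most $2 = \OR{1}$.

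The only ``obstacle'' is really a bookkeeping point: the random element oracle in Section \ref{section:random_elements} is assumed to return (nearly) uniformly distributed elements, so Lemma \ref{ree_totient_prop} applies with negligible loss. No further calculation is required.
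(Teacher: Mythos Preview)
Your argument is correct and matches the paper's own proof essentially verbatim: the paper simply notes that the number of selections is geometrically distributed with success probabilities given by Lemma~\ref{ree_totient_prop}, and concludes the stated expectations. Your additional remark about near-uniformity of the random element oracle is a reasonable aside but not something the paper addresses explicitly here.
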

\begin{proof}
  Clearly the number of selections is geometrically distributed, where
  the success probabilities for each selection are given by Lemma
  \ref{ree_totient_prop}. Hence the expectations are as stated.
\end{proof}

\begin{pr} \label{ree_conjugacy_classes}
Elements in $\Ree(q)$ of order prime to $3$, with the same trace, are conjugate.
\end{pr}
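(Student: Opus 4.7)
The plan is to combine the classification of maximal cyclic tori of $\Ree(q)$ with a Weyl group analysis on each, and to verify via Ward's character table that the $7$-dimensional trace separates the resulting conjugacy classes. Since $3$ is the defining characteristic, any $g \in \Ree(q)$ of order coprime to $3$ is semisimple, hence lies in some maximal cyclic subgroup of $\Ree(q)$. From Proposition~\ref{ree_maximal_subgroup_list} these are, up to conjugacy, exactly four: $H(q)$ of order $q - 1$, the subgroup $A_0$ of order $(q+1)/4$, and the subgroups $A_1, A_2$ of orders $q + 1 - 3t$ and $q + 1 + 3t$. So it suffices to consider $g, h$ lying inside these four standard tori.

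For a fixed torus $T$, two elements are $G$-conjugate if and only if they lie in the same orbit of $W_T := \Norm_G(T)/\Cent_G(T)$, whose structure is read off Proposition~\ref{ree_maximal_subgroup_list} and Proposition~\ref{sylow3_props}(ix); in particular $W_{H(q)} \cong \Cent_2$ acts by inversion. Because $\Ree(q) \leqslant \SO(7, q)$, Lemma~\ref{lem_1eigenvalue} gives $1$ as an eigenvalue of $g$ and forces the characteristic polynomial to have the palindromic form $(x - 1)\prod_{i=1}^{3}(x^2 - \mu_i x + 1)$. Consequently $\Tr(g) = \Tr(g^{-1})$, which is already consistent with the $W_T$-orbits being closed under inversion; it remains to show that no further coincidences occur.

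The heart of the plan is to verify that the $7$-dimensional trace is injective on the disjoint union of the $T/W_T$, where $T$ ranges over the four tori. For $H(q)$ one computes directly
\[ \Tr(h(\lambda)) = 1 + \lambda^t + \lambda^{-t} + \lambda^{1-t} + \lambda^{t-1} + \lambda^{2t-1} + \lambda^{1-2t}, \]
and analogous Laurent-polynomial expressions hold on $A_0, A_1, A_2$ after fixing a generator and expressing eigenvalues in the appropriate extension of $\F_q$. The main obstacle is showing that these polynomials are injective modulo their respective Weyl actions and have pairwise disjoint images across tori. The cleanest resolution is to invoke Ward's explicit character table of $\Ree(q)$ from~\cite{ward66}: the values of the natural $7$-dimensional character are tabulated on every semisimple class, and the required injectivity can be read off directly. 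Furthermore, since the eigenvalues of elements from non-conjugate tori lie in different subfields of $\overline{\F_q}$ (the orders of the four tori have pairwise small common factors), their Galois-closed multisets already determine the containing torus, which rules out trace coincidences between tori and completes the reduction.
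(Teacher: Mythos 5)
The decisive step of your plan --- that the trace of the natural $7$-dimensional module separates the semisimple classes --- is never actually proved. You propose to read it off Ward's character table, but the table in \cite{ward66} is the \emph{ordinary} (complex) character table of $\Ree(q)$; the natural module is a representation in the defining characteristic $3$, and its trace function is a Brauer character that does not appear among Ward's characters (the smallest non-trivial ordinary character degree is $q^2-q+1$; there is no $7$-dimensional complex character), so there is nothing to ``read off directly''. What remains is exactly the obstacle you yourself flag: showing that the Laurent-polynomial trace functions, e.g.\ $\Tr(h(\lambda)) = 1+\lambda^{t}+\lambda^{-t}+\lambda^{1-t}+\lambda^{t-1}+\lambda^{2t-1}+\lambda^{1-2t}$ on $H(q)$, are injective modulo the Weyl action and have pairwise disjoint images for the different tori --- and this is not done. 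The cross-torus part of your argument is also logically reversed: from $\Tr(g)=\Tr(h)$ you cannot conclude that $g$ and $h$ have the same eigenvalue multiset, so the observation that Galois-closed eigenvalue multisets determine the containing torus does not exclude trace coincidences between classes coming from different tori.

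The reduction itself also has a gap: the four cyclic subgroups $H(q)$, $A_0$, $A_1$, $A_2$ do not exhaust the elements of order prime to $3$. The maximal torus of order $q+1$ is $\Cent_2\times\Cent_2\times A_0$, not cyclic, and $\Ree(q)$ contains cyclic subgroups of order $(q+1)/2$ (see the proof of Proposition \ref{pr_element_props}, quoting \cite{ree85}); an element of order $(q+1)/2$ lies in no conjugate of any of your four subgroups. The assertion that $G$-conjugacy inside a torus $T$ is governed by orbits of $\Norm_G(T)/\Cent_G(T)$ also needs justification at the level of the finite group. For comparison, the paper sidesteps all of this with a counting argument: by \cite{ward66} there are exactly $q-1$ conjugacy classes of non-identity elements of order prime to $3$, and one exhibits $q-1$ distinct traces attained by such elements, namely $\Tr(S(0,0,1)\Upsilon h(\lambda))=\lambda^t-1$ for $\lambda\neq -1$ together with $\Tr(h(-1))=-1$; since elements with different traces cannot be conjugate, the $q-1$ classes must carry pairwise distinct traces, which is the statement. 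If you wish to keep the torus-by-torus route you must supply a genuine proof of the injectivity of these trace functions; otherwise the counting argument is the efficient fix.
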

\begin{proof}
  From \cite{ward66}, the number of conjugacy classes of non-identity
  elements of order prime to $3$ is $q - 1$. Observe that for $\lambda
  \in \F_q^{\times}$, $\Tr (S(0, 0, 1) \Upsilon h(\lambda)) = \lambda^t -
  1$ and $\abs{S(0, 0, 1) \Upsilon h(\lambda)}$ is prime to $3$ if also
  $\lambda \neq -1$.
  
  Moreover, $h(-1)$ has order $2$ and trace $-1$ so there are $q - 1$
  possible traces for non-identity elements of order prime to $3$, and
  elements with different trace must be non-conjugate. Thus all
  conjugacy classes must have different traces.
\end{proof}


\begin{pr} \label{psl_generation}
Let $G = \PSL(2, q)$. If $x, y \in G$ are uniformly random, then 
\begin{equation}
\Pr{\gen{x, y} = G} = 1 - \OR{\sigma_0(\log(q)) / q}
\end{equation}
\end{pr}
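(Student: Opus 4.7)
The plan is to apply the standard union bound over the maximal subgroups of $G = \PSL(2, q)$. If $\gen{x, y} \neq G$, then $\gen{x, y}$ is contained in some maximal subgroup $M$, so
\[
\Pr{\gen{x, y} \neq G} \leqslant \sum_{M \text{ maximal}} \Pr{x \in M} \Pr{y \in M} = \sum_{M} \left(\frac{\abs{M}}{\abs{G}}\right)^{2}.
\]
For non-abelian simple $G$, every maximal subgroup is self-normalising, so grouping by $G$-conjugacy class the partial sum over $[M]$ equals $[G : M] (\abs{M}/\abs{G})^2 = 1/[G : M]$. It therefore suffices to bound $\sum_{[M]} 1/[G : M]$ summed over conjugacy classes of maximal subgroups.

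Next I would invoke Dickson's classification of the maximal subgroups of $\PSL(2, q)$. For $q = p^e$ odd, the conjugacy classes are: a Borel subgroup of index $q + 1$; the dihedral subgroups $\Dih_{q \pm 1}$ of index $q(q \mp 1)/2$; possibly $A_4$, $S_4$ or $A_5$, each of bounded order and so of index $\Theta(q^3)$; and subfield subgroups $\PSL(2, p^{e/r})$ (with a $\PGL$-variant at $r = 2$) for each prime $r \mid e$, of index $\Theta(q^{3(1 - 1/r)})$. Only the subfield type produces an unbounded number of classes, but this number is bounded by the number of prime divisors of $e$ and hence by $\OR{\sigma_0(\log(q))}$.

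Adding up the contributions: the Borel gives $1/(q + 1) = \OR{1/q}$; the two dihedral classes give $\OR{1/q^2}$; the exceptionals give $\OR{1/q^3}$; and each subfield class contributes at most $\OR{q^{-3/2}}$, so the combined subfield contribution is $\OR{\sigma_0(\log(q))/q^{3/2}}$. These sum to $\OR{\sigma_0(\log(q))/q}$, as claimed. The only non-routine ingredient is Dickson's classification together with the count of subfield conjugacy classes; given those, the rest is bookkeeping, and I expect no real obstacle.
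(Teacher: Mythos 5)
Your argument is correct: the union bound over maximal subgroups, grouping by conjugacy class via self-normalisation, together with Dickson's classification (Borel, dihedral, $A_4$, $S_4$, $A_5$, and at most $\OR{\sigma_0(\log(q))}$ classes of subfield subgroups of index at least about $q^{3/2}$) gives $\Pr{\gen{x,y} \neq G} = \OR{1/q}$, which is even slightly stronger than the stated $\OR{\sigma_0(\log(q))/q}$. The paper itself omits the proof of this well-known fact, and your argument is the standard one, so there is nothing to compare beyond noting that your bookkeeping (indices, class counts, independence of $x$ and $y$) is accurate.
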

\begin{proof}
The maximal subgroup $M \leqslant G$ consisting of the upper triangular matrices modulo scalars has index $q + 1$, and all subgroups isomorphic to $M$ are conjugate. Since $M = \Norm_G(M)$, there are $q + 1$ conjugates of $M$. 
\begin{equation}
\Pr{\gen{x, y} \leqslant M^g \; \text{some} \; g \in G} \leqslant \sum_{i = 1}^{q + 1} \Pr{\gen{x, y} \leqslant M} = \frac{1}{q + 1}
\end{equation}
The other maximal subgroups have index strictly greater than $q + 1$. 

The number of conjugacy classes of
maximal subgroups is $\OR{\sigma_0(\log(q))}$, and hence the probability that $\gen{x,y}$ lies in a maximal subgroup is $\OR{\sigma_0(\log(q)) / q}$.
\end{proof}

The following result is analogous to \cite[Proposition 5.1]{baarnhielm05}. 
\begin{pr} \label{ree_prop_frobenius}
If $g_1, g_2 \in U(q)H(q)$ are uniformly random and independent, then
\begin{equation}
\Pr{\abs{[g_1, g_2]} = 9} = 1 - \frac{1}{q - 1}
\end{equation}
\end{pr}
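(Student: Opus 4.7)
The plan is to decompose via the Frobenius structure $U(q)H(q) = U(q) \rtimes H(q)$ from Proposition \ref{sylow3_props}(7) and reduce the order-$9$ condition on $[g_1,g_2]$ to a simple linear equation. Every $g_i \in U(q)H(q)$ has a unique decomposition $g_i = u_i h_i$ with $u_i = S(a_i,b_i,c_i) \in U(q)$ and $h_i = h(\lambda_i) \in H(q)$; uniformity of $g_i$ is equivalent to uniformity of $(a_i,b_i,c_i,\lambda_i) \in \F_q^3 \times \F_q^\times$, and $g_1,g_2$ are independent. Since $U(q) \triangleleft U(q)H(q)$, we have $[g_1,g_2] \in U(q)$; by Proposition \ref{sylow3_props}(4),(5) it has order $9$ iff it lies in $U(q) \setminus U(q)'$, equivalently iff its first parameter in the $S(a,b,c)$-form is nonzero.

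Using $hu = u^{h^{-1}} h$ and commutativity of $H(q)$, I would rewrite
\[
  [g_1,g_2] = (u_4^{-1} u_3)^{h_1 h_2}, \qquad u_3 = u_1 u_2^{h_1^{-1}},\quad u_4 = u_2 u_1^{h_2^{-1}},
\]
and observe that conjugation by $h_1 h_2 \in H(q)$ multiplies the first parameter by a nonzero scalar, so it suffices to detect when the first parameter of $u_4^{-1} u_3$ is nonzero. Since (\ref{ree_matrix_id1})--(\ref{ree_matrix_id2}) tell us that first parameters add under multiplication, negate under inversion, and scale by $\lambda^{3t-2}$ under conjugation by $h(\lambda)$, a short computation gives
\[
  \text{first parameter of } [g_1,g_2] \;=\; \kappa\bigl(a_1(1 - \mu_2^{-1}) + a_2(\mu_1^{-1} - 1)\bigr),
  \quad \mu_i := \lambda_i^{3t-2},\ \kappa \in \F_q^\times;
\]
crucially, the $b_i$ and $c_i$ drop out, as they must since this is the image of $[g_1,g_2]$ in the quotient $U(q)/U(q)' \cong (\F_q,+)$.

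A short Euclidean-algorithm computation with $q = 3t^2$ gives $\gcd(3t-2,\,q-1) = 1$, so $\lambda \mapsto \lambda^{3t-2}$ is a bijection of $\F_q^\times$ and $(\mu_1,\mu_2)$ is uniform on $(\F_q^\times)^2$, independent of $(a_1,a_2)$ uniform on $\F_q^2$. Writing $\alpha = 1 - \mu_2^{-1}$, $\beta = \mu_1^{-1} - 1$, I would then split on whether $\alpha$ and/or $\beta$ vanish: the event $\alpha = \beta = 0$ has probability $1/(q-1)^2$ and forces $a_1\alpha + a_2\beta = 0$; in each of the three remaining configurations the linear form $a_1\alpha + a_2\beta$ has a one-dimensional zero locus in $\F_q^2$, giving conditional probability $1/q$. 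Summing,
\[
  \Pr{a_1\alpha + a_2\beta = 0} \;=\; \frac{1}{(q-1)^2} + \left(1 - \frac{1}{(q-1)^2}\right)\frac{1}{q} \;=\; \frac{1}{q-1},
\]
which is the claimed failure probability.

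The only substantive obstacle is the bookkeeping in the commutator rewrite to verify that $b_i, c_i$ really cancel from the leading coordinate. The structural reason (quotienting by $U(q)'$) makes this inevitable, but a direct verification via (\ref{ree_matrix_id1})--(\ref{ree_matrix_id2}) is the step most prone to arithmetic slips. The $\gcd$ computation and the subsequent counting are routine, and the coprimality $\gcd(3t-2,q-1) = 1$ is the single number-theoretic input that makes $\mu_i$ genuinely uniform on $\F_q^\times$.
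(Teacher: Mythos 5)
Your proof is correct, and I verified the key points: the first parameter of $[g_1,g_2]$ is indeed $\mu_1\mu_2\bigl(a_1(1-\mu_2^{-1})+a_2(\mu_1^{-1}-1)\bigr)$ with $\mu_i=\lambda_i^{3t-2}$, the Euclidean computation $\gcd(3t-2,3t^2-1)=1$ holds, and the resulting count gives failure probability exactly $\tfrac{1}{(q-1)^2}+\bigl(1-\tfrac{1}{(q-1)^2}\bigr)\tfrac1q=\tfrac{1}{q-1}$. The paper itself omits the proof, citing the analogous Suzuki-group result, and your argument (unique Frobenius decomposition, reduction modulo $U(q)'$ to the first parameter, coprimality of $3t-2$ with $q-1$, then counting zeros of the linear form) is precisely the intended style of argument, so there is nothing to fault.
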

\begin{proof}
  By Proposition \ref{sylow3_props}, $[g_1, g_2] \in U(q)$ and
  has order $9$ if and only if $[g_1, g_2] \notin U(q)^{\prime}
  \triangleleft U(q)H(q)$. It is therefore sufficient to find the
  proportion of (unordered) pairs $k_1, k_2 \in U(q) H(q) / U(q)^{\prime} =: \mathfrak{B}$ such that $[k_1, k_2] = 1$. 

  If $k_1 = 1$ then $k_2$ can be any element of $\mathfrak{B}$, which gives $q(q
  - 1)$ pairs.  If $1 \neq k_1 \in U(q) / U(q)^{\prime} \cong
  \F_q$ then $\Cent_{\mathfrak{B}}(k_1) = U(q) / U(q)^{\prime}$, so we again obtain
  $q(q - 1)$ pairs. Finally, if $k_1 \notin U(q)$ then
  $\abs{\Cent_{\mathfrak{B}}(k_1)} = q - 1$ so we obtain $q(q - 2)(q - 1)$ pairs.
  Thus we obtain $q^2 (q - 1)$ pairs from a total of $\abs{\mathfrak{B} \times \mathfrak{B}}
  =q^2 (q - 1)^2$ pairs, and the result follows.
\end{proof}

\subsection{Alternative definition} \label{section:alt_definition} The
definition of $\Ree(q)$ that we have given is the one that best suits
most of our purposes. However, to deal with some aspects of constructive recognition, we need the more common definition.

Following \cite[Chapter $4.5$]{MR2562037}, the exceptional group $\mathrm{G}_2(q)$ is
constructed by considering the Cayley algebra $\OO$ (the octonion
algebra), which has dimension $8$, and defining $\mathrm{G}_2(q)$ as the
automorphism group of $\OO$. Thus each element of $\mathrm{G}_2(q)$ fixes the
identity and preserves the algebra multiplication, and it follows that
$\mathrm{G}_2(q)$ is isomorphic to a subgroup of $\Omega(7, q)$.

Furthermore, when $q$ is an odd power of $3$, $\mathrm{G}_2(q)$ has a
certain outer automorphism, sometimes called the \emph{exceptional
  outer automorphism}, whose set of fixed points forms a group denoted
$\Ree(q) = \G2(q)$. The automorphism is defined in \cite[Chapter
4]{MR2562037}, as well as in \cite{MR2653666, MR2653247}, and amounts
to a mapping from the natural module to a section of the exterior
square, followed by the field automorphism $\varphi^m$.

\section{Algorithms}
In the following sections, we will describe the main algorithms.

In Section \ref{section:ree_conjugacy} we describe an algorithm that
takes a $\GL(7, q)$-conjugate of $\Ree(q)$ and finds a matrix that
conjugates it to $\Ree(q)$. Hence this is a constructive recognition
algorithm for the small Ree groups in the natural representations. As a
component of that algorithm, a (non-constructive) recognition
algorithm for $\Ree(q)$ is needed. We describe such an algorithm in
Section \ref{section:standard_recognition}.

In Section \ref{section:ree_constructive_membership} we describe an algorithm for constructive membership testing in $\Ree(q)$. That algorithm needs to find generators for the stabiliser of a point of $\OV$. An algorithm that accomplishes that is described in Section \ref{section:stabiliser_elements}.

\section{Recognition} \label{section:standard_recognition}

\begin{thm} \label{thm_ree_standard_recognition}
  There exists a Las Vegas algorithm that, given $\gen{X} \leqslant \GL(7,
  q)$, decides whether or not $\gen{X} = \Ree(q)$. The algorithm has expected time complexity $\OR{\sigma_0(\log(q))(\abs{X} + \log(q))}$ field
  operations.
\end{thm}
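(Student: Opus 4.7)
The plan is to verify in sequence that $\gen{X} \subseteq \Ree(q)$ and that $\gen{X}$ is not contained in any proper subgroup of $\Ree(q)$. If both checks succeed then necessarily $\gen{X} = \Ree(q)$; otherwise the algorithm returns false.

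For the containment $\gen{X} \subseteq \Ree(q)$ it suffices to test $x \in \Ree(q)$ for each $x \in X$. Following the alternative description in Section \ref{section:alt_definition}, $\Ree(q)$ is the fixed-point subgroup of the exceptional outer automorphism of $\G2(q)$, and $\G2(q)$ is in turn the stabiliser in $\SO(7, q)$ of the $\G2$-invariant alternating trilinear form coming from octonion multiplication. For each $x \in X$ I would verify: $\det(x) = 1$; $x J x^{T} = J$ with $J$ from \eqref{standard_bilinear_form}; $x$ preserves the trilinear form; and $x$ is fixed by the exceptional outer automorphism. With care the last test can be arranged as a short system of polynomial identities in the entries of $x$, so that each of the four checks costs $\OR{1}$ field operations per generator, giving total cost $\OR{\abs{X}}$ for this stage.

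For the second verification, Proposition \ref{ree_maximal_subgroup_list} shows that every maximal subgroup of $\Ree(q)$ other than the subfield copies $\Ree(s)$ acts reducibly on $\F_q^7$. So I would run (i) the MeatAxe absolute-irreducibility test, a Las Vegas procedure of expected cost $\OR{\abs{X} + \log(q)}$, and (ii) for each of the $\OR{\sigma_0(\log(q))}$ proper subfields $\F_s$ of $\F_q$, a subfield-detection routine to decide whether $X$ can be simultaneously conjugated into $\GL(7, s)$, at cost $\OR{\abs{X} + \log(q)}$ per subfield. If (i) succeeds and (ii) fails for every proper subfield, then $\gen{X}$ avoids every maximal subgroup of $\Ree(q)$, so $\gen{X} = \Ree(q)$.

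Summing the contributions yields expected cost $\OR{\sigma_0(\log(q))(\abs{X} + \log(q))}$ field operations, matching the theorem. The main obstacle is the element-level test in the first stage: the exceptional outer automorphism a priori involves an iterated Frobenius, which would naively cost $\OR{\log(q)}$ per matrix and break the complexity budget when $\abs{X}$ is large. Expressing the fixed-point condition as a bounded number of polynomial identities in the matrix entries, rather than evaluating the automorphism itself, is essential to meet the stated bound.
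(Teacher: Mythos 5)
Your overall route is the paper's own: test each generator for membership in $\Ree(q)$ (determinant and the form $J$, the $\mathrm{G}_2$-structure coming from octonion multiplication, and being fixed by the exceptional outer automorphism), then invoke Proposition \ref{ree_maximal_subgroup_list} to reduce properness of $\gen{X}$ to reducibility or writability over a proper subfield, checked with the MeatAxe and the smaller-field algorithm. The genuine gap is the step you yourself flag as essential: the claim that the fixed-point condition under the exceptional outer automorphism can be verified in $\OR{1}$ field operations per generator via a bounded system of polynomial identities in the matrix entries. You give no such system, and it is very unlikely that one exists: the automorphism is a twist involving the field automorphism $x \mapsto x^{3^m}$, so the natural fixed-point test compares entries of (a submatrix of the exterior square of) $g$ with their images under $x \mapsto x^{3^m}$, a polynomial of degree $3^m = \sqrt{q/3}$; with $\OR{1}$ field operations one can only evaluate polynomials of bounded degree, and no bounded-degree characterisation of ${^2}\mathrm{G}_2(q)$ inside $\mathrm{G}_2(q)$ is available -- the dependence on $m$ through the twist is exactly what distinguishes $\Ree(q)$ from $\mathrm{G}_2(q)$ and from the other twisted subgroups. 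The paper does the straightforward thing: it evaluates the automorphism (extract a submatrix of the exterior square and raise each entry to the power $3^m$), at cost $\OR{\log q}$ per generator, so the first stage costs $\OR{\abs{X}\log q}$, which it then counts within the stated bound. Done honestly, your first stage costs the same; the $\OR{1}$ shortcut on which your complexity claim rests is unsupported and should be replaced by the $\OR{\abs{X}\log q}$ accounting.

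Two smaller points. Omitting the spinor-norm test is harmless only because preservation of the octonion multiplication already forces $g \in \mathrm{G}_2(q) \leqslant \Omega(7, q)$; if you instead characterise $\mathrm{G}_2(q)$ as the stabiliser of the associated alternating trilinear form, you need a justification that this characterisation is valid in characteristic $3$, whereas the paper's direct check of $(e_i \cdot e_j)g = (e_i g)\cdot(e_j g)$ against a precomputed multiplication table avoids that subtlety at the same cost.
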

\begin{proof}
Let $G = \Ree(q)$. The algorithm proceeds as
follows:
\begin{enumerate}
\item Determine if $X \subseteq G$: all the following steps must succeed in order to conclude that a given $g
  \in X$ also lies in $G$.
\begin{enumerate}
\item Determine if $g \in \Omega(7, q)$, which is true if $\det g = 1$, if $g J g^T = J$, where $J$ is given by
  \eqref{standard_bilinear_form}, and if the spinor norm of $g$ is $1$. The spinor norm is calculated using \cite[Theorem 2.10]{MR2765375}.
\item Determine if $g \in \mathrm{G}_2(q)$, which from Section \ref{section:alt_definition} is true if $g$ preserves the
  octonion algebra multiplication $\cdot$. Hence test if $(e_i \cdot e_j)g = (e_ig) \cdot (e_jg)$ for each $i,j = 1, \dotsc, 7$, where $M = \gen{e_1, \dotsc, e_7}$ is the natural module of $\mathrm{G}_2(q)$. The multiplication table for $\cdot$ can be pre-computed using \cite[Chapter $4.5$]{MR2562037}.
\item Determine if $g$ is a fixed point of the exceptional outer
  automorphism of $\mathrm{G}_2(q)$, mentioned in Section
  \ref{section:alt_definition}. From the precise definition of the
  automorphism in \cite[Chapter $4.5$]{MR2562037}, it follows that
  computing the automorphism amounts to extracting a submatrix of the
  exterior square of $g$ and then mapping $g \mapsto \varphi^m(g)$.
\end{enumerate}

\item Determine if $\gen{X}$ is a proper subgroup of $G$, or
  equivalently if $\gen{X}$ is contained in a maximal subgroup. By
  Proposition \ref{ree_maximal_subgroup_list}, it is
sufficient to determine if $\gen{X}$ can be written over a smaller
field or if $\gen{X}$ is reducible. This can be done using the
  algorithms described in \cite{smallerfield} and the MeatAxe \cite{meataxe, better_meataxe}.
\end{enumerate}

The first step takes $\OR{\abs{X}\log(q)}$ field operations. The expected time of the algorithms in \cite{smallerfield} and of the MeatAxe is
$\OR{\sigma_0(\log(q))(\abs{X} + \log(q))}$ field operations. Hence our recognition algorithm has the stated expected time,
and it is Las Vegas since the MeatAxe is Las Vegas.
\end{proof}

\section{Finding an element of a stabiliser} \label{section:stabiliser_elements}

Let $G = \Ree(q) = \gen{X}$. The algorithm for constructive membership testing needs to obtain independent random elements of $G_P$, for a given point $P \in \OV$, as $\SLP$s in $X$. This is straightforward if, for any pair of points $P,Q
\in \OV$, we can construct $g \in G$ as an $\SLP$ in $X$ such that $Pg = Q$.


We first give an overview of the algorithm for accomplishing this. The general idea is to
obtain an involution $j \in G$ by random search, and then compute
$\Cent_G(j) \cong \gen{j} \times \PSL(2, q)$ using \cite{bray00}. The
given $G$-module restricted to the centraliser splits up as in Proposition
\ref{pr_inv_cent_module_structure}, and the points $P,Q \in \OV$
project to points $P_3, Q_3$ in the $3$-dimensional submodule. If the
projections satisfy certain conditions, then we can write down $g \in
\Cent_G(j)$ that maps $P_3$ to $Q_3$, and obtain $g$ as an $\SLP$ in
the generators of $\Cent_G(j)$ using the maps from Theorem
\ref{thm_psl_recognition}. With high probability, we can then multiply
$g$ by an element that fixes $P_3$ so that it also maps $P$ to $Q$. A
discrete logarithm oracle is needed in that step. When using
\cite{bray00}, we can easily keep track of $\SLP$s of the centraliser
generators, hence we obtain $g$ as an $\SLP$ in $X$.


By Corollary
\ref{cl_random_selections} it is easy to find elements of even order
by random search, which we can power up to obtain involutions.

To use \cite{bray00} we need an algorithm that
determines if the whole centraliser has been generated. Since its derived group should be
$\PSL(2, q)$, by Proposition \ref{psl_generation}, with high
probability it is sufficient to compute two random elements of the
derived group. Random elements of the derived group can be obtained as
described in Section \ref{section:random_elements}.




Let us now describe the algorithm in more detail. First we fix some notation for the remainder of this Section.
\begin{itemize}
\item $j \in G = \gen{X} = \Ree(q)$ is an involution, and $C = \Cent_G(j) = \gen{Y}$,
\item $V \cong V_3 \oplus V_4$ is the module of $C^{\prime}$
\item $\zeta_V : V \to V_3$ is the natural projection homomorphism,
\item $\zeta_{\OV} : \PS(V) \to \PS(V_3)$ is the induced projective map,
\item $\zeta_G : C \to \GL(3, q)$ is the corresponding group epimorphism. Define $C_3 = \zeta_G(C^{\prime})$.
\item $\pi_3 : \PSL(2, q) \to \Omega(3, q)$ is the symmetric square map (an isomorphism), so $\pi_3 : g \mapsto \mathcal{S}^2(g)$,
\item $\rho_G : C_3 \to \PSL(2, q)$ is the map to the standard copy from Theorem \ref{thm_psl_recognition},
\item $\pi_7 : C_3 \to C^{\prime}$ is calculated by first using $\rho_G$ to map an element to the standard copy, then expressing it as an $\SLP$, which is then evaluated on $Y$. 
\item $c_3$ is a change-of-basis from $V_3$ to $\mathcal{S}^2(\mathfrak{A})$. Hence $C_3^{c_3} = \IM \pi_3$.
\item Identify $\mathfrak{A}$ as $\gen{x} \oplus \gen{y}$ for indeterminates $x,y$.
\end{itemize}

Clearly, an application of the MeatAxe on $V$ provides a change-of-basis which allows us to set up the maps $\zeta_V$, $\zeta_{\OV}$ and $\zeta_G$. An application of Theorem \ref{thm_psl_recognition} on $C_3$ allows us to set up the maps $\pi_3$ and $\pi_7$, and to obtain $c_3$.

\subsection{Constructing a mapping element}
We now consider the algorithm that constructs elements that map one point
of $\OV$ to another. Since $\mathfrak{A} \cong \gen{x} \oplus \gen{y}$ we can identify the module $\mathfrak{\hat{U}} = \PS(\mathcal{S}^2(\mathfrak{A}))$ with
the space of quadratic forms in $x$ and $y$ modulo scalars, so that $\mathfrak{\hat{U}} = \PS(\gen{x^2}
\oplus \gen{xy} \oplus \gen{y^2})$. Then $C_3^{c_3}$ acts projectively on $\mathfrak{\hat{U}}$ and $\abs{\mathfrak{\hat{U}}} = \abs{\PS(V_3)} = \abs{\PS^2(\F_q)} = (q^3 - 1) / (q - 1) = q^2 + q + 1$. 

\begin{pr} \label{pr_psl_ovoid_action}
Under the action of $\mathfrak{\hat{H}} = C_3^{c_3}$, the set $\mathfrak{\hat{U}}$ splits into $3$ orbits.
\begin{enumerate}
\item The orbit containing $xy$, \emph{i.e.} the non-degenerate quadratic forms that represent $0$, which has size $q(q + 1) / 2$.
\item The orbit containing $x^2 + y^2$, \emph{i.e.} the non-degenerate quadratic forms that do not represent $0$, which has size $q(q - 1) / 2$.
\item The orbit containing $x^2$ (and $y^2$), \emph{i.e.} the degenerate quadratic forms, which has size $q + 1$.
\end{enumerate}
The pre-image in $\SL(2, q)$ of $\rho_G((\mathfrak{\hat{H}}_{xy})^{c_3^{-1}})$ is dihedral of order $2(q - 1)$, generated by the matrices
\begin{flalign} \label{sl2_stab_gens}
\begin{bmatrix}
\omega & 0 \\
0 & \omega^{-1} 
\end{bmatrix} &
\ \text{and} \ \begin{bmatrix}
0 & 1 \\
-1 & 0 
\end{bmatrix} 
\end{flalign}
\end{pr}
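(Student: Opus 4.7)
My plan is to identify $U$ with the projective space of binary quadratic forms and translate the action of $H$ into the classical action of $\PSL(2,q)$ on such forms by linear substitution of variables. Since the symmetric square of $-I \in \SL(2,q)$ is the identity, the representation $\pi_3$ factors through $\PSL(2,q)$, so for the orbit question it is equivalent to consider the action of $\SL(2,q)$ on quadratic forms, up to scalars.

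Over $\F_q$ in odd characteristic, the $\SL(2,q)$-invariant of a form $ax^2+bxy+cy^2$ is its discriminant $b^2-4ac$, and scaling the form by $\lambda \in \F_q^\times$ scales the discriminant by $\lambda^2$. Hence the projective $H$-invariant is the class of the discriminant in $\F_q/(\F_q^\times)^2 = \{0,\square,\mathrm{nonsquare}\}$, giving at most three orbits. I would then exhibit representatives: $x^2$ has discriminant $0$ (degenerate), $xy$ has discriminant $1$ (a square, so representing $0$), and $x^2+y^2$ has discriminant $-4$, which in characteristic $3$ equals $-1$; since $q=3^{2m+1}\equiv 3\pmod 4$, $-1$ is a nonsquare, so this representative is anisotropic. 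I would then argue that $\PSL(2,q)$ is transitive on each of these three sets by a direct construction (any degenerate form is a scalar multiple of a square of a linear form, and $\PSL(2,q)$ is transitive on projective linear forms; for the isotropic case, the unordered pair of roots of a nondegenerate isotropic form lies in $\PS^1(\F_q)$ and $\PSL(2,q)$ is $2$-transitive on $\PS^1(\F_q)$; for the anisotropic case, the pair of roots forms a Galois-conjugate pair in $\PS^1(\F_{q^2})\setminus\PS^1(\F_q)$ on which $\PSL(2,q)$ is transitive). The orbit sizes then follow by counting: $q+1$ points of $\PS^1(\F_q)$ for the degenerate orbit, $\binom{q+1}{2}=q(q+1)/2$ unordered pairs of distinct $\F_q$-points for the isotropic orbit, and $(q^2-q)/2 = q(q-1)/2$ Galois pairs for the anisotropic orbit; these sum to $q^2+q+1=\abs U$ as required.

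For the stabiliser statement, I would compute directly. Writing $g=\begin{pmatrix}a&b\\c&d\end{pmatrix}\in\SL(2,q)$ and acting on $xy$ by the appropriate substitution yields $(ax+cy)(bx+dy) = abx^2 + (ad+bc)xy + cdy^2$. The condition that this is a scalar multiple of $xy$ forces $ab=cd=0$, and combined with $ad-bc=1$ this leaves exactly the diagonal matrices $\mathrm{diag}(\mu,\mu^{-1})$ and the antidiagonal matrices $\begin{pmatrix}0 & \mu\\ -\mu^{-1} & 0\end{pmatrix}$, giving $2(q-1)$ elements. The diagonals form a cyclic group of order $q-1$ generated by $\mathrm{diag}(\omega,\omega^{-1})$; the antidiagonal $\begin{pmatrix}0&1\\-1&0\end{pmatrix}$ inverts this cyclic subgroup by conjugation, so together they generate the dihedral group of order $2(q-1)$ specified in \eqref{sl2_stab_gens}.

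The main obstacle is really just bookkeeping: being consistent about whether $H$ acts via $g$ or $g^T$ on quadratic forms (irrelevant for the orbit classification but important for writing down the explicit stabiliser generators), and handling the characteristic $3$ case correctly in the anisotropic/isotropic dichotomy so that $x^2+y^2$ genuinely represents a different class from $xy$; this is where noting $q \equiv 3 \pmod 4$ is essential.
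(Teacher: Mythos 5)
Your proof is correct and fills in precisely the ``elementary theory of quadratic forms, working projectively'' that the paper's own proof invokes without detail: the discriminant class as the projective invariant, transitivity on each class via the roots (a point of $\PS^1(\F_q)$, an unordered pair of distinct $\F_q$-points, or a Galois-conjugate pair in $\PS^1(\F_{q^2})$), the resulting counts summing to $q^2+q+1$, and the direct stabiliser computation forcing $ab=cd=0$. One caveat, inherited from the statement itself rather than introduced by you: since $\bigl(\begin{smallmatrix}0&1\\-1&0\end{smallmatrix}\bigr)^2=-I$, every element of the preimage outside the diagonal torus has order $4$, so the order-$2(q-1)$ preimage in $\SL(2,q)$ has a unique involution and is not literally dihedral (its image in $\PSL(2,q)$ is dihedral of order $q-1$); your computation of the order and of the two generators is what is actually used later (Theorem \ref{thm_find_mapping_element_pr}), so the final inference ``inverts by conjugation, hence dihedral'' should simply be weakened to that.
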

\begin{proof}
This is elementary theory of quadratic forms, except that we work projectively.

\end{proof}

\begin{pr} \label{pr_ree_ovoid_split}
Use the notation above.
\begin{enumerate}
\item The number of points of $\OV$ that are contained in $\Ker(\zeta_V)$ is $q + 1$.
\item Let $P \in \OV$ be uniformly random. The probability that $P \nsubseteq \Ker(\zeta_V)$, and that $\zeta_{\OV}(P) c_3$ is both non-degenerate and represents $0$ is at least $1/2 + \OR{1/q}$.

\end{enumerate}
\end{pr}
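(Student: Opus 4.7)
For part (1), since all involutions of $G$ are conjugate by Proposition \ref{pr_involution_props}(3), we may assume $j = h(-1)$. Then $V_3 = \langle e_2, e_4, e_6 \rangle$ and $V_4 = \langle e_1, e_3, e_5, e_7 \rangle = \ker \varphi_V$ are the $\pm 1$-eigenspaces of $j$. A projective point lies in $\ker \varphi_V$ iff it is fixed by $j$, and the parameterisation of $\OV$ in Proposition \ref{ree_doubly_transitive_action} shows that no point of $\OV$ lies in $\PS(V_3)$ (all such points either have non-zero first coordinate or equal $P_\infty \in V_4$). Hence $\OV \cap \ker \varphi_V$ coincides with the set of $j$-fixed points in $\OV$, which has $q + 1$ elements by Proposition \ref{pr_involution_props}(4).

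For part (2) the plan is in two steps. First, compute the invariant quadratic form value $q_3(\varphi_V(P)) = v J_3 v^T$ from the explicit coordinates of the parameterisation; after cancellations in characteristic $3$, this simplifies to $a^t c + a^{4t+2} - c^{2t}$ (independent of $b$). For $a = 0$ it vanishes only when $c = 0$; for $a \neq 0$, the substitution $c = a^{3t+2} u$ (using $3 t^2 = q \equiv 1 \pmod{q-1}$) factors the form as $a^{4t+2}(u^{2t} - u - 1)$. I would then rule out roots of $u^{2t} = u + 1$ in $\F_q$ by raising both sides to the $(3t)$-th power: since $6 t^2 = 2q$ gives $u^{6 t^2} = u^2$, and $(u+1)^{3t} = u^{3t} + 1$ by Frobenius (as $3t = 3^{m+1}$), one obtains $u^{3t} = u^2 - 1 = (u-1)(u^t)^2$, which forces $u^t = u - 1$ when $u \neq 0$. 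Substituting back into $u^{2t} = u + 1$ yields $(u-1)^2 = u^2 + u + 1 = u + 1$ in characteristic $3$, hence $u^2 = 0$, a contradiction. So no non-kernel point of $\OV$ maps to the degenerate $C_3$-orbit on $\PS(V_3)$.

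The second step is to show that the remaining non-kernel points split equally between the hyperbolic and elliptic $C_3$-orbits. I would aim to prove that $C^{\prime}$ acts with exactly two regular orbits of size $\abs{C^{\prime}} = q(q^2-1)/2$ on $\OV \setminus \mathrm{Fix}_{\OV}(j)$; this matches the total count $q(q^2 - 1)$. For a non-kernel $P$, $C_P$ lies in $G_{P, jP}$, which is cyclic of order $q - 1$ and conjugate to $H(q)$ by Proposition \ref{pr_involution_props}(1); since $j$ normalises $G_{P, jP}$ (swapping $P$ and $jP$) but is not contained in it, and elements outside $H(q)$ in its normaliser invert $H(q)$ (Proposition \ref{sylow3_props}(9)), we get $C_P = \{1, j^*\}$ where $j^*$ is the unique involution of $G_{P, jP}$. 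Regularity of the $C^{\prime}$-action then reduces to the claim $j^* \notin C^{\prime}$, which is the hardest step: the Klein four-group $\langle j, j^* \rangle$ consists of Ree involutions of common eigenvalue signature $(-1)^4 (+1)^3$, and an explicit eigenvalue analysis on the $\langle j \rangle$-decomposition $V = V_3 \oplus V_4$ (using the $C^{\prime}$-module structure from Proposition \ref{pr_inv_cent_module_structure}) is needed to place $j^*$ in the non-trivial coset of $C^{\prime}$ in $C$. Once regularity is established, $C^{\prime}$-equivariance and orbit-stabiliser force constant fibres of sizes $q - 1$ over the hyperbolic orbit and $q + 1$ over the elliptic orbit, so that $\abs{\OV_+} = \abs{\OV_-} = q(q^2-1)/2$, and the probability equals $q(q-1)/(2(q^2-q+1)) = 1/2 - 1/(2(q^2-q+1))$, which is $\geqslant 1/2 + \OR{1/q}$.
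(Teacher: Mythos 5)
Your part (1) is correct and is essentially the paper's argument (the paper counts the points of $\OV$ with $p_2=p_4=p_6=0$ directly). For part (2) you take a genuinely different and much more explicit route, and its first half is sound --- indeed sharper than the paper's treatment: the paper asserts that $\varphi_{\OV}(P)$, conditioned on $p_2\neq 0$, is uniformly distributed on $\PS^2(\F_q)$ and deduces a degeneracy probability of $1/q$, whereas your computation of $vJ_3v^T=a^tc+a^{4t+2}-c^{2t}$ (the $b$-terms do cancel, and $(3t+2)\cdot 2t\equiv 4t+2\pmod{q-1}$) together with the Frobenius manipulation showing $u^{2t}=u+1$ has no root in $\F_q$ proves that \emph{no} non-kernel point of $\OV$ maps into the degenerate conic. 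Since the invariant form on the absolutely irreducible module $V_3$ is unique up to scalar, its zero set really is the degenerate orbit, so this part of your argument is complete and correct.

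The gap is in the second half, and you flag it yourself: the equidistribution $\abs{\OV_+}=\abs{\OV_-}=q(q^2-1)/2$ rests on (a) the claim that the unique involution $j^*$ of $G_{P,jP}$ lies outside $C'$, equivalently that $C'$ acts regularly on $\OV\setminus\mathrm{Fix}_{\OV}(j)$, which you call ``the hardest step'' and do not carry out; and (b) even granting two regular $C'$-orbits, nothing you say excludes the possibility that both map onto the \emph{same} non-degenerate $C_3$-orbit (with constant fibres of size $2(q-1)$ or $2(q+1)$ --- the counting is consistent with this), in which case the probability would be close to $1$ or to $0$ rather than $1/2$. Without (a) and (b) the stated bound does not follow. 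Note that your own first step already reduces the question to deciding, for $u$ ranging over $\F_q$, how often $u+1-u^{2t}$ lies in a fixed square class (the factor $a^{4t+2}$ is a square), so a direct count or character-sum estimate for that expression would finish the proof without ever settling (a) and (b); alternatively, (b) at least can be dispatched by exhibiting one explicit point in each of $\OV_+$ and $\OV_-$. As written, part (2) is incomplete.
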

\begin{proof}
\begin{enumerate}
\item The map $\zeta_V$ projects onto $V_3$, so the kernel consists of
  those vectors that lie in $V_4$. From the proof of Proposition
  \ref{ree_maximal_subgroup_list}, with respect to a suitable basis,
  $V_4$ is the $-1$-eigenspace of $h(-1)$. Hence by an argument similar to the proof of Proposition
  \ref{pr_involution_props}, a point $P = (p_1, \dotsc, p_7) \in V_4$ if $p_2 = p_4 = p_6 = 0$ and there are $q+1$ such points in $\OV$.

\item Since $P$ is uniformly random and chosen independently of $\zeta_{\OV}$, it follows that $\zeta_{\OV}(P)$ is
  uniformly random from $\zeta_{\OV}(\OV)$. Without loss of
  generality we can take $c_3$ to be the identity. Using the notation
  above, $\zeta_{\OV}(P) = p_2 x^2 + p_4 xy + p_6 y^2$, which is
  degenerate if $p_2 = 0$. This
  happens with probability $(q + 1) / (q^2 + 1)$. If $p_2 \neq 0$, then $P \nsubseteq \Ker(\zeta_V)$, and we can then express the point as $\zeta_{\OV}(P) = x^2 + b xy + c y^2$
  where $(b, c)$ is uniformly distributed in $\F^2_q$. It is degenerate if $b^2 - c = 0$, which happens with probability
  $1/q$. If it is not degenerate, it represents $0$ when $b^2 - c$ is a
  square in $\F_q$, which happens with probability $1/2$. The
  result follows.
%
\end{enumerate}
\end{proof}


The algorithm that maps one point to another is given as Algorithm \ref{alg:find_mapping_element}.

\begin{figure}
\begin{codebox}
\refstepcounter{algorithm}
\label{alg:find_mapping_element}
\Procname{\kw{Algorithm} \ref{alg:find_mapping_element}: $\proc{FindMappingElement}(X, P, Q)$}
\li \kw{Input}: Generating set $X$ for $G = \Ree(q)$, $\mathfrak{H} = C_3$.
\zi Points $P \neq Q \in \OV$ such that $P, Q \nsubseteq \Ker(\zeta_V)$, and $\zeta_{\OV}(P)$ and $\zeta_{\OV}(Q)$ are 
\zi non-degenerate and represent $0$. 
\li \kw{Output}: $g_2 \in G$, written as an $\SLP$ in $X$, such that $Pg_2 = Q$.
\li $P_3 := \zeta_{\OV}(P) c_3 $; $Q_3 := \zeta_{\OV}(Q) c_3$ 
\li Construct upper triangular $g \in \PSL(2, q)$ such that $P_3 \pi_3(g) = Q_3$ \label{find_mapping_element_tria_map}
\li     $R_3 := \zeta_{\OV}(P \pi_7(\pi_3(g)^{c_3^{-1}})) c_3$ 
\li     \Comment Now $R_3 = Q_3$
\li     Construct $c \in \GL(3, q)$ such that $(xy) c = R_3$ \label{find_mapping_element_zero_map}
\li     Let $\mathfrak{D}$ be the image in $\PSL(2, q)$ of the diagonal matrix in \eqref{sl2_stab_gens} 
\li     $s := \pi_7(\pi_3(\mathfrak{D})^{c c_3^{-1}})$
\li     \Comment Now $\gen{s} \leqslant \zeta_G^{-1}(\mathfrak{H}_{R_3})$
\li     $\delta, z := \proc{Diagonalise}(s)$ 
\li     \Comment Now $\delta = s^z$
\li     \If $\exists \, \lambda_0 \in \F_q^{\times}$ such that $(P \pi_7(\pi_3(g)^{c_3^{-1}}) z) h(\lambda_0) = Q z$ \label{find_mapping_element_diag_map}
\zi     \Then
\li          $k := \proc{DiscreteLog}(\delta, h(\lambda_0))$ 
\li          \Comment Now $\delta^k = h(\lambda_0)$
\li          \Return $\pi_7(\pi_3(g)^{c_3^{-1}}) s^k$ \label{find_mapping_element_ret}
        \End
\zi     \kw{end}
\li \Return \const{fail}
\end{codebox}
\end{figure}

\subsection{Constructing a stabilising element} \label{section:find_stab_element}
Let $G = \Ree(q) = \gen{X}$, $P \in \OV$ be given. The complete algorithm that constructs a random element of $G_P$ proceeds as follows.
\begin{enumerate}
\item Find a random involution $j \in G$.
\item Compute generators $Y$ for $C = \Cent_G(j)$ using \cite{bray00}, and generators for $C^{\prime}$ as described in Section \ref{section:random_elements}.
\item Use the MeatAxe to verify that the module for
  $C^{\prime}$ splits up only as in Proposition
  \ref{pr_inv_cent_module_structure}. 
\item Return to the first step if $P$ lies in the kernel of $\zeta_V$, if $\zeta_{\OV}(P) c_3$ is degenerate, or if it does not represent $0$.
\item Use Theorem \ref{thm_psl_recognition} to verify that we have the whole of $C^{\prime}$ and to set up maps listed at the start of of Section \ref{section:stabiliser_elements}. Return to the second step if this fails.
\item Take random $g_1 \in G$ and let $Q = Pg_1$. Repeat until $P \neq Q$, $Q$ does not lie in the kernel of $\zeta_V$ and $\zeta_{\OV}(Q) c_3$ is not degenerate and represents $0$.
\item Use Algorithm \ref{alg:find_mapping_element} to find $g_2 \in C^{\prime}$ such that $P g_2 = Q$. Return to the previous step if it fails, otherwise return $g_1 g_2^{-1}$.
\end{enumerate}

\subsection{Correctness and complexity}

\begin{lem} \label{lem_tri_map} Let $P_3, Q_3 \in \zeta_{\OV}(\OV)$ be non-degenerate and represent $0$. There exists $g \in \PSL(2, q)$ such
 that the pre-image of $g$ in $\SL(2, q)$ is upper triangular and $P_3
  \pi_3(g)^{c_3^{-1}} = Q_3$.
\end{lem}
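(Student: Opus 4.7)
The plan is to transfer the problem to binary quadratic forms via the change-of-basis $c_3$. With the basis $(x^2, xy, y^2)$ of $\mathcal{S}^2(A)$, each point of $\PS(\mathcal{S}^2(A))$ becomes a binary quadratic form in $x, y$ modulo scalars, $\pi_3$ is the symmetric square representation, and the non-degenerate forms representing $0$ are precisely those that factor as a product of two distinct linear forms over $\F_q$; equivalently, they correspond bijectively to unordered pairs of distinct points of $\PS^1(\F_q)$ via their zero sets. Proposition \ref{pr_psl_ovoid_action} already guarantees that $P_3$ and $Q_3$ lie in a single $H$-orbit; the content of the lemma is to sharpen this transitivity to the Borel subgroup $B \le \SL(2, q)$ of upper triangular matrices.

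The main step is therefore to analyse the $B$-action. A preimage $\left(\begin{smallmatrix} a & b \\ 0 & a^{-1}\end{smallmatrix}\right)$ of an element of $B$ projectively fixes $[1:0] \in \PS^1(\F_q)$ and acts on $\A^1 = \PS^1(\F_q) \setminus \{[1:0]\}$ via $t \mapsto a^2 t + ab$. The crucial arithmetic observation is that $q = 3^{2m+1} \equiv 3 \pmod 4$, so $-1$ is a non-square in $\F_q^{\times}$; hence swapping an ordered pair $(t_1, t_2)$ negates the difference $t_2 - t_1$ and interchanges the two square-classes of $\F_q^{\times}$. This collapses the two would-be $B$-orbits on ordered pairs of distinct points of $\A^1$ (distinguished by the square-class of $t_2 - t_1$) into a single $B$-orbit on unordered pairs, and combined with the transitivity of $B$ on $\A^1$ itself, this produces the Borel transitivity required by the lemma.

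Given this transitivity, the required $g$ is written down explicitly by solving for $(a, b)$ from the linear system obtained by equating the symmetric-square action on $P_3$ with a scalar multiple of $Q_3$, once the factorisations of $P_3$ and $Q_3$ into linear forms are in hand; this is routine linear algebra in each of the admissible orbit configurations. The hard case to watch for is the mixed configuration in which one of $P_3, Q_3$ has $[1:0]$ as a zero and the other does not, since $B$ pointwise fixes $[1:0]$ and no upper triangular element can resolve it. The reliance on the hypothesis $P_3, Q_3 \in \varphi_{\OV}(\OV)$ (rather than on arbitrary non-degenerate forms representing $0$) is precisely what is needed to exclude this mixed case, using the explicit ovoid coordinates of Proposition \ref{ree_doubly_transitive_action} together with the constraints on $c_3$ inherited from Proposition \ref{pr_inv_cent_forms}; verifying that exclusion is the main obstacle in the proof.
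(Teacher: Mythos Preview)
Your route is genuinely different from the paper's. The paper argues by brute computation: it takes $c_3 = 1$ without loss of generality, writes $P_3 = x^2 + a\,xy + b\,y^2$ and $Q_3 = x^2 + l\,xy + n\,y^2$, writes the symmetric square of $\left(\begin{smallmatrix} u & v \\ 0 & u^{-1}\end{smallmatrix}\right)$ explicitly, and solves the resulting system. Eliminating $u$ and $v$ collapses everything to the single equation $C^2(n - l^2) + (a^2 - b) = 0$, and the hypothesis ``represents $0$'' is used exactly once, to say that the two discriminants $a^2 - b$ and $l^2 - n$ are each nonzero squares, so their quotient is a square and $C$ exists. No geometry of $\PS^1$ and no appeal to the square class of $-1$ appear. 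Your $-1$--non-square argument is the conceptual shadow of this: on ordered pairs the Borel invariant is the square class of the difference, and passing to unordered pairs kills it precisely because $q\equiv 3\pmod 4$. The two arguments buy the same conclusion; yours explains \emph{why} the quadratic is always solvable, the paper's just checks that it is.

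Where your proposal is weaker is the mixed case. You correctly isolate the obstruction---if exactly one of $P_3,Q_3$ has $[1:0]$ among its zeros, no upper-triangular element can work---but your proposed resolution is not right and, as you concede, is not carried out. The ovoid hypothesis together with Proposition~\ref{pr_inv_cent_forms} does \emph{not} exclude this: in the ovoid coordinates of Proposition~\ref{ree_doubly_transitive_action} one can take $a = 0$, $c \neq 0$ and obtain a point whose image under $\varphi_{\OV}$ is $-c^t\,xy + (\cdots)y^2$, which is non-degenerate in the usual sense, represents $0$, and has $[1:0]$ as a zero; meanwhile points with $a\neq 0$ give forms with nonzero $x^2$-coefficient. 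So the hypothesis ``$P_3, Q_3\in\varphi_{\OV}(\OV)$'' by itself does not separate these cases, and Proposition~\ref{pr_inv_cent_forms} concerns bilinear forms, not the change-of-basis $c_3$. The paper sidesteps this entirely: its opening sentence ``Since $P_3$ and $Q_3$ are non-degenerate, $P_3 = x^2 + a\,xy + b\,y^2$'' simply \emph{assumes} the $x^2$-coefficient is nonzero. The justification for that comes from the surrounding text (see the proof of Proposition~\ref{pr_ree_ovoid_split}, where the case $p_2 = 0$ is lumped into the ``degenerate'' failure branch of the algorithm), not from the ovoid structure you point to. So your instinct that there is something to check here is sound, but the mechanism you propose for resolving it is the wrong one.
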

\begin{proof}
Without loss of generality, we can take $c_3 = 1$. Since $P_3$ and $Q_3$ are non-degenerate, $P_3 = x^2 + a xy + b y^2$ and $Q_3 = x^2 + l xy + n y^2$ where $(1 : a : b)$ and $(1 : l : n)$ are in $\PS^2(\F_q)$. Also,
\begin{equation}
g = \begin{bmatrix}
u & v \\
0 & 1/u
\end{bmatrix}
\end{equation}
where $u, v \in \F_q$ and $u \neq 0$.

We want to determine $u, v$ such that $P \pi_3(g) = Q$. Note that
$g$ is the pre-image in $\SL(2, q)$ of an element in $\PSL(2, q)$ and
therefore $\pm u$ determine the same element of $\PSL(2, q)$. The map $\pi_3$ is the symmetric square map, and $\chr{\F_q} = 3$, so
\begin{equation}
\pi_3(g) = \begin{bmatrix}
u^2 & -uv & v^2 \\
0 & 1 & v/u \\
0 & 0 & 1/u^2
\end{bmatrix}
\end{equation}

This leads to the following equations:
\begin{align}
u^2 &= \mathfrak{C} \label{tri_map_eq1} \\
-uv + a &= \mathfrak{C} l \label{tri_map_eq2} \\
v^2 + a v u^{-1} + b u^{-2} &= \mathfrak{C} n \label{tri_map_eq3}
\end{align}
for some $\mathfrak{C} \in \F_q^{\times}$. We can solve for $u$ in
\eqref{tri_map_eq1} and for $v$ in \eqref{tri_map_eq2}, so that
\eqref{tri_map_eq3} becomes
\begin{equation}
\mathfrak{C}^2 (n - l^2) + a^2 - b = 0
\end{equation}
This quadratic equation has a solution if the discriminant $(l^2 - n) (a^2 - b) \in (\F_q^{\times})^2$. But the latter is true since both $(a^2 - b)$ and $(l^2 - n)$ are non-zero squares. The result follows.
\end{proof}

\begin{thm}  \label{thm_find_mapping_element_pr}
If Algorithm $\ref{alg:find_mapping_element}$ returns an element $g_2$, then $P g_2 = Q$. If $P$ and $Q$ are defined as in Section $\ref{section:find_stab_element}$, then the probability that the choice of $j$ results in Algorithm $\ref{alg:find_mapping_element}$ finding such an element is at least $1/2 + \OR{1/q}$.
\end{thm}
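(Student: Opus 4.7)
The plan is to handle correctness first, then the probability bound. For correctness, the returned element is $g_2 = m s^i$ where $m = \pi_7(\pi_3(g)^{c_3^{-1}})$. Since $z$ diagonalises $s$, $s = z \delta z^{-1}$; because the conditional at line \ref{find_mapping_element_diag_map} succeeds with witness $\lambda$ and the discrete log step returns $i$ with $\delta^i = h(\lambda)$, we compute
\[
P g_2 = P m z \delta^i z^{-1} = (Pmz) h(\lambda) z^{-1} = (Qz) z^{-1} = Q,
\]
which establishes $P g_2 = Q$ whenever Algorithm \ref{alg:find_mapping_element} returns a non-failure output.

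For the probability bound, the key structural input is Proposition \ref{pr_psl_ovoid_action}: the stabiliser of $Q_3$ in $C_3^{c_3} \cong \PSL(2,q)$ is dihedral of order $q-1$, inside which $\langle s \rangle$ sits as a cyclic index-$2$ subgroup of order $(q-1)/2$. Lifting through $\pi_7$, Algorithm \ref{alg:find_mapping_element} succeeds precisely when $Pm$ and $Q$ lie in a common $\langle s \rangle$-orbit on the fibre $\varphi_V^{-1}(Q_3) \cap \OV$. Lemma \ref{lem_tri_map} guarantees that a suitable $m$ is always produced, so $Pm$ lies in this fibre; meanwhile, by the double transitivity of $G$ on $\OV$ (Proposition \ref{ree_doubly_transitive_action}) combined with the filtering of Section \ref{section:find_stab_element}, $Q = Pg_1$ is distributed essentially uniformly over the valid points of the fibre.

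Each orbit of the dihedral stabiliser on the fibre then either forms a single $\langle s \rangle$-orbit (when some element of the reflection coset fixes a point in the orbit) or decomposes into two $\langle s \rangle$-orbits of equal size. The algorithm succeeds on every target in orbits of the first type, but on only half of those of the second type. Since the reflection coset contains $(q-1)/2$ involutions, each with $\OR{1}$ fixed points on the fibre, only an $\OR{1/q}$ fraction of targets lies in short orbits; averaging over the remainder yields the claimed success probability $1/2 + \OR{1/q}$. The main obstacle is precisely this orbit-structure analysis on the fibre, in particular verifying with probability $1 - \OR{1/q}$ that $Q$ lies in the same dihedral orbit as $Pm$; I would derive this from the transitivity of the dihedral stabiliser on the ``generic'' (non-degenerate, zero-representing) portion of the fibre, extending the orbit count of Proposition \ref{pr_psl_ovoid_action}.
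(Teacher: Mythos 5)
Your correctness argument is fine, and is in fact a more explicit version of what the paper leaves implicit: with $\delta = s^z$ and $\delta^i = h(\lambda)$, the chain $P m s^i = (Pmz)\delta^i z^{-1} = (Qz)z^{-1} = Q$ is exactly why line \ref{find_mapping_element_ret} returns a correct element.

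The probability half contains a genuine gap, which you yourself flag as ``the main obstacle'' but do not close. Everything hinges on showing that the point $Q$ lies in the same orbit as $R = Pm$ under the dihedral group $D = \varphi_G^{-1}(H_{R_3})$ of order $q-1$; only then does the index-$2$ position of $\gen{s}$ in $D$ yield the factor $1/2$. Your proposed derivation --- ``transitivity of the dihedral stabiliser on the generic portion of the fibre'' --- is precisely the statement that needs proving, not a tool for proving it: a priori the fibre over $Q_3$ could decompose into several $D$-orbits, and if $Q$ falls outside the $D$-orbit of $R$ the algorithm fails outright regardless of the $\gen{s}$-orbit structure of the orbit containing $Q$. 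For the same reason your sentence ``the algorithm succeeds on every target in orbits of the first type'' is wrong as stated: success requires $Q \in R\gen{s}$, so targets in $D$-orbits other than the one containing $R$ always fail, whether or not those orbits are single $\gen{s}$-orbits. The paper sidesteps the fibre analysis entirely by working with group elements rather than points: it takes $g_1 \in C^{\prime}$ with $Pg_1 = Q$ (this is where the distributional hypothesis on $Q$ enters), observes that $h = \pi_7(g_3)^{-1}g_1$ maps $R$ to $Q$ and that $\varphi_G(h)$ stabilises $Q_3 = R_3$, so $h$ lies in the dihedral group $\varphi_G^{-1}(H_{R_3})$, and identifies the success event of line \ref{find_mapping_element_diag_map} with the event $h \in \gen{s}$, of probability $1/2$. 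To repair your point-counting version you would need to prove that $C^{\prime}$ is essentially transitive on the set of valid points of $\OV$ --- equivalently, that each valid fibre is essentially a single regular $D$-orbit of size $q-1$ --- and that counting argument is exactly what is missing from your proposal.
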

\begin{proof}
  By Proposition \ref{pr_psl_ovoid_action}, the point $R_3$ is in the
  same orbit as $xy$, so the element $c$ at line
  \ref{find_mapping_element_zero_map} can easily be found by
  diagonalising the form corresponding to $R_3$. Let $\mathfrak{H} = C_3$. Then $\pi_3(\mathfrak{D})^{c c_3^{-1}} \in \mathfrak{H}_{R_3}$ has order $(q - 1) / 2$. Hence $s$ also has order $(q - 1)/2$, and $s \in \zeta_G^{-1}(\mathfrak{H}_{R_3})$.
  
  We choose $Q$ such that there exists $g_1 \in C^{\prime}$ with $Pg_1
  = Q$. If we let $g_3 = \pi_3(g)^{c_3^{-1}}$, with $g$ as in the algorithm, and $R = P \pi_7(g_3)$, then $R \pi_7(g_3)^{-1} g_1 = Q$ and $\zeta_{\OV}(R) =
  R_3 = Q_3$. Hence $\zeta_G(\pi_7(g_3)^{-1} g_1) \in \mathfrak{H}_{Q_3}$, and
  therefore $\pi_7(g_3)^{-1} g_1 \in \zeta_G^{-1}(\mathfrak{H}_{R_3})$.
  
  By Proposition \ref{pr_psl_ovoid_action}, $\zeta_G^{-1}(\mathfrak{H}_{R_3})$ is
  dihedral of order $q - 1$, and $s$ generates a subgroup of index
  $2$. Therefore $\Pr{\pi_7(g_3)^{-1} g_1 \in \gen{s}} = 1/2$, which is
  the success probability of line \ref{find_mapping_element_diag_map}.
  
  It is straightforward to determine if $\lambda_0$ exists, since
  $h(\lambda_0)$ is diagonal. 
Hence the success probability of the algorithm is
  as stated.
\end{proof}

\begin{thm} \label{thm_find_mapping_element_comp}
  Assume an oracle for the discrete logarithm problem in $\F_q$.
The time complexity of Algorithm $\ref{alg:find_mapping_element}$ is $\OR{\log(q)^3 + \chi_D}$ field operations. The length of the returned $\SLP$ is $\OR{\log(q) \log\log(q)}$.
\end{thm}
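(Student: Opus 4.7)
The plan is to walk through Algorithm \ref{alg:find_mapping_element} line by line, accounting for the cost of each step in field operations and the contribution of each step to the length of the returned $\SLP$. Steps that only involve linear algebra in fixed dimension (forming $P_3$ and $Q_3$, solving the triangular system from Lemma \ref{lem_tri_map} for $g$, diagonalising the quadratic form $R_3$ to build $c$, applying $\pi_3$ and conjugating, and the \proc{Diagonalise} call on the $7\times 7$ matrix $s$) each cost $\OR{1}$ field operations, since the relevant matrix dimensions are bounded constants. The test on line \ref{find_mapping_element_diag_map} is also $\OR{1}$, because it reduces to comparing ratios of diagonal entries of $Pg_3 z$ and $Qz$.

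The dominant non-trivial costs come from the evaluations of $\pi_7$ and from the discrete logarithm call. By construction, each invocation of $\pi_7$ first applies the recognition map $\rho_G$ from Theorem \ref{thm_psl_recognition} on the $3$-dimensional copy $C_3$, then returns the image as an $\SLP$, and finally evaluates this $\SLP$ on the $7\times 7$ generators $Y$. Plugging $d=3$ and $e = \OR{\log q}$ into the bound of Theorem \ref{thm_psl_recognition} yields $\OR{\log(q)\log\log(q) + e^3} = \OR{\log(q)^3}$ field operations for the pre-image computation, and the resulting $\SLP$ has length $\OR{\log(q)\log\log(q)}$, so its evaluation on $Y$ takes another $\OR{\log(q)\log\log(q)}$ field operations. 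Hence each $\pi_7$ call costs $\OR{\log(q)^3}$. Only $\OR{1}$ such calls are made, so the total contribution is $\OR{\log(q)^3}$. The discrete logarithm call on line computing $i$ contributes exactly $\OR{\chi_D}$ field operations by assumption.

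Finally, the returned element $\pi_7(\pi_3(g)^{c_3^{-1}}) s^i$ is produced as an $\SLP$: the first factor is already an $\SLP$ of length $\OR{\log(q)\log\log(q)}$ coming from a $\pi_7$ call, $s$ itself is an $\SLP$ of the same length, and $s^i$ with $i < q$ can be written as an $\SLP$ using repeated squaring with $\OR{\log q}$ additional operations. Concatenating and multiplying, the total $\SLP$ length is $\OR{\log(q)\log\log(q)}$. Forming $s^i$ as a matrix by repeated squaring (needed only if the element itself is required in $\GL(7,q)$) adds $\OR{\log q}$ field operations, which is absorbed in the bound.

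Combining all these contributions gives the claimed time complexity $\OR{\log(q)^3 + \chi_D}$ and $\SLP$ length $\OR{\log(q)\log\log(q)}$. The only point that requires any genuine care is tracking that the two $\pi_7$ evaluations, rather than the surrounding linear algebra, are what dominate the field operation count; once this is identified, the bound is immediate from Theorem \ref{thm_psl_recognition}.
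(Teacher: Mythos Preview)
Your proof is correct and follows essentially the same approach as the paper's own proof: both identify that the constant-dimension linear algebra steps cost $\OR{1}$, that the $\pi_7$ evaluations dominate with $\OR{\log(q)^3}$ via the constructive membership bound from Theorem~\ref{thm_psl_recognition}, and that the $\SLP$ length is inherited from that theorem. Your account is in fact more detailed than the paper's (which simply asserts the $\pi_7$ cost and omits the $s^i$ analysis), but the underlying argument is the same.
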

\begin{proof}
  By Lemma \ref{lem_tri_map}, line \ref{find_mapping_element_tria_map}
  involves solving a quadratic equation in $\F_q$, and hence uses
  $\OR{1}$ field operations. Evaluating $\pi_3$ uses $\OR{1}$ field operations, and $\pi_7$
  uses $\OR{\log(q)^3}$ field operations. It is clear that the rest
  of the algorithm can be done using $\OR{\chi_D}$ field operations.

  By Theorem \ref{thm_psl_recognition}, the length of the $\SLP$ from the
  constructive membership testing in $\PSL(2, q)$ is $\OR{\log(q) \log\log(q)}$,
  which is therefore also the length of the returned $\SLP$.
\end{proof}

\begin{cl} \label{cl_find_stab_element} Assume an oracle for the discrete logarithm problem in $\F_q$. There exists a Las Vegas algorithm
  that, given $\gen{X} \leqslant \GL(7, q)$ such that $G = \gen{X} =
  \Ree(q)$ and $P \in \OV$, constructs a random element of $G_P$ as
  an $\SLP$ in $X$. 
The expected time complexity of the algorithm is
  $\OR{\xi  \log\log(q)  + \log(q)^3 + \chi_D}$ field operations. The length of the returned
  $\SLP$ is $\OR{\log(q) \log\log(q)}$.
\end{cl}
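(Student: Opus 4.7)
The plan is to verify each stage of the seven-step algorithm given in Section~\ref{section:find_stab_element}, showing that every randomised check succeeds with constant probability and that the costs and SLP lengths sum to the claimed bounds. Correctness is immediate: if the algorithm returns, then by Theorem~\ref{thm_find_mapping_element_pr} the element $g_2 \in C^{\prime}$ satisfies $P g_2 = Q = P g_1$, whence $P(g_1 g_2^{-1}) = P$ and $g_1 g_2^{-1} \in G_P$. Since $g_1$ is drawn from the random element oracle and the rejection steps depend only on the orbit structure, the output distribution on $G_P$ is essentially uniform, which is sufficient for the loose requirement of ``a random element''.

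The probability analysis decomposes into several independent events, each bounded below by an absolute constant. By Corollary~\ref{cl_random_selections}, an involution $j$ is found in $\OR{1}$ expected random selections after powering. Bray's centraliser algorithm adds a bounded number of further selections, and Proposition~\ref{pr_inv_cent_module_structure} guarantees the $V_3 \oplus V_4$ decomposition that the MeatAxe has to certify. Proposition~\ref{pr_ree_ovoid_split} handles both the condition on the fixed point $P$ and the condition on $Q = P g_1$ with success probability at least $1/2 + \OR{1/q}$. Proposition~\ref{psl_generation} ensures that two random derived-group elements generate $C^{\prime} \cong \PSL(2,q)$ except with probability $\OR{\sigma_0(\log q)/q}$, validating the use of Theorem~\ref{thm_psl_recognition}. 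Finally, Theorem~\ref{thm_find_mapping_element_pr} gives a $1/2 + \OR{1/q}$ success probability for Algorithm~\ref{alg:find_mapping_element}. Multiplying out, each loop takes $\OR{1}$ iterations in expectation, and the expected total number of random elements drawn across all loops is $\OR{\log\log q}$, the $\log\log q$ coming from the $\PSL(2,q)$ recognition.

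For the cost of one successful execution, the dominant contributions are Theorem~\ref{thm_psl_recognition} applied in dimension~$3$, costing $\OR{\xi \log\log q + \log(q)\log\log(q)^2 + \chi_D}$ field operations, and a single call to Algorithm~\ref{alg:find_mapping_element}, costing $\OR{\log(q)^3 + \chi_D}$ field operations by Theorem~\ref{thm_find_mapping_element_comp}. Bray's algorithm, the MeatAxe verifications, and the random element invocations together cost only $\OR{\xi + \sigma_0(\log q)(\abs{X} + \log q)}$, which is absorbed. Summing yields the stated $\OR{\xi \log\log q + \log(q)^3 + \chi_D}$. The returned SLP is a product of $g_1$ (length $\OR{\log\log q}$ under the standing assumption, since only $\OR{\log\log q}$ random elements are drawn overall) and $g_2^{-1}$ (length $\OR{\log(q)\log\log(q)}$ by Theorem~\ref{thm_find_mapping_element_comp}); the latter dominates, giving the stated SLP length. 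The algorithm is Las Vegas since every subroutine is Las Vegas and a direct check $P(g_1 g_2^{-1}) = P$ certifies the answer before returning.

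The main obstacle is the probabilistic estimate for the condition on $P$ in Step~4: Proposition~\ref{pr_ree_ovoid_split} is stated for a uniformly random point, whereas here $P$ is fixed and the randomness resides in the involution $j$. The cleanest resolution is to note that, as $j$ ranges uniformly over its single conjugacy class, the induced eigenspace decomposition and hence the projection $\varphi_V$ are uniformly distributed over a $G$-orbit; a change of variables $j \mapsto j^x$, $P \mapsto P x^{-1}$ then transfers the randomness to $P$, so the event that $\varphi_{\OV}(P) c_3$ is non-degenerate and represents $0$ has exactly the probability computed in Proposition~\ref{pr_ree_ovoid_split}.
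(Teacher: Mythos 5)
Your proposal is correct and follows essentially the same route as the paper: both verify the seven-step algorithm of Section~\ref{section:find_stab_element} step by step, citing Corollary~\ref{cl_random_selections}, Proposition~\ref{psl_generation}, Proposition~\ref{pr_ree_ovoid_split}, and Theorems~\ref{thm_find_mapping_element_pr} and~\ref{thm_find_mapping_element_comp} for the probability and cost estimates, with the $\xi\log\log(q)$ term coming from the $\PSL(2,q)$ recognition and the SLP length dominated by the output of Algorithm~\ref{alg:find_mapping_element}. Your final paragraph transferring the randomness from the involution $j$ to the fixed point $P$ via conjugation-equivariance is a genuine (and correct) refinement of a step the paper's proof leaves implicit when it applies Proposition~\ref{pr_ree_ovoid_split} to the fixed input point $P$.
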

\begin{proof}
  The algorithm is given in Section \ref{section:find_stab_element}.
  
  An involution is found by obtaining a random element of even order and then raising it to an appropriate power. Hence by Corollary
  \ref{cl_random_selections}, the expected time to find an involution
  is $\OR{\xi + \log(q) \log\log(q)}$ field operations.

  By \cite[Theorem 7]{ryba_trick}, we can use \cite{bray00} to
  obtain generators of the centraliser, using $\OR{1}$ field
  operations. As described in Section \ref{section:random_elements},
  we can obtain uniformly random elements of its derived group. By
  Proposition \ref{psl_generation}, two random elements will generate
  $\PSL(2, q)$ with high probability. This implies that the expected
  time to obtain generators for $\PSL(2, q)$ is $\OR{1}$
  field operations.
  
  By Proposition \ref{pr_psl_ovoid_action}, $Q$ is equal to $P$ with
  probability $2/(q(q+1))$. In the algorithm in Section
  \ref{section:find_stab_element}, $P$ is given and $j$ is uniformly
  random, but Proposition \ref{pr_ree_ovoid_split} applies,
  since $\zeta_{\OV}(P)$ will then still be uniformly random. Hence $P$ has
  the required properties, with respect to the choice of $j$, with
  probability $1/2$, and similarly for $Q$, so the expected time of
  the penultimate step is $\OR{1}$ field operations.

  Note that $g_1$ is chosen randomly, and the map $g_1 \mapsto g_1 g_2^{-1}$ is deterministic (when $j$ is fixed) and independent of the choice of $g_1$. Therefore the output of the map, \emph{i.e.} the element
  returned by the algorithm in Section \ref{section:find_stab_element},
  is uniformly random in $G_P$.
  
  The expected time complexity of the last step is given by Theorem
  \ref{thm_find_mapping_element_pr} and \ref{thm_find_mapping_element_comp}. It follows from the above
  argument and from \cite{meataxe} and Theorem \ref{thm_psl_recognition} that the
  expected time complexity of the algorithm in Section
  \ref{section:find_stab_element} is as stated.

  The algorithm is clearly Las Vegas, since it
  is straightforward to check that the element we compute fixes
  the point $P$.
\end{proof}



\section{Constructive membership testing}
\label{section:ree_constructive_membership}

We now describe the constructive membership algorithm for our standard
copy $\Ree(q)$. Given a set of generators $X$, such that $G = \Ree(q) = \gen{X}$, and given $g \in G$, we want to express $g$ as an
$\SLP$ in $X$. To decide if $g \in G$, we use the first
step of the algorithm in Theorem \ref{thm_ree_standard_recognition}.

The general structure of the algorithm mirrors the corresponding
algorithm for the Suzuki groups \cite{baarnhielm05}. It consists of a
pre-processing step and a main step. The pre-processing step is only
executed once for a given $X$, and computes certain sets of matrices
necessary for the execution of the main step. The actual constructive
membership algorithm is the main step, which expresses a given $g \in
G$ as an $\SLP$ in $X$.

\subsection{Pre-processing} \label{section:preprocessing_step} The
main step requires the ability to express
elements of $\mathrm{O}_3(G_{P_{\infty}})$ and $\mathrm{O}_3(G_{P_0})$
as $\SLP$s in the given generators. Our approach for that resembles
Gaussian elimination, and the purpose of the pre-processing step is to construct sets of matrices that facilitate this.

For $\mathrm{O}_3(G_{P_{\infty}}) = U(q)$ these matrices are
\begin{equation} \label{inf_std_generators}
\set{S(a_i, x_i, y_i)}_{i = 1}^{n} \cup \set{S(0, b_i, z_i)}_{i = 1}^{n} \cup \set{S(0, 0, c_i)}_{i = 1}^n
\end{equation}
where $\set{a_1, \dotsc, a_n}$,
$\set{b_1, \dotsc, b_n}$, $\set{c_1, \dotsc, c_n}$ must form vector space bases of $\F_q$ over $\F_3$
(so $n = \log_3{q} = 2m + 1$), but are otherwise arbitrary. The elements $x_i, y_i, z_i \in \F_q$ are arbitrary. We shall denote any set of the form \eqref{inf_std_generators} as \lq\lq standard generators'' for $G_{P_{\infty}}$. Standard generators for $G_{P_0}$ are defined analogously.


The precise operations which need the standard generators, and hence the motivation for their definition, are given by following elementary result, which we state without proof.

\begin{lem} \label{ree_row_operations}
There exist algorithms for the following operations.
\begin{enumerate}
\item Given $g = h(\lambda) S(a, b, c) \in G_{P_{\infty}}$, construct $x \in \mathrm{O}_3(G_{P_{\infty}})$ expressed as an $\SLP$ in the standard generators, such that $gx = h(\lambda)$.
\item Given $g = S(a, b, c) h(\lambda) \in G_{P_{\infty}}$, construct $x \in \mathrm{O}_3(G_{P_{\infty}})$ expressed as an $\SLP$ in the standard generators, such that $xg = h(\lambda)$.
\item Given $P_{\infty} \neq P \in \OV$, construct $g \in \mathrm{O}_3(G_{P_{\infty}})$ expressed as an $\SLP$ in the standard generators, such that $Pg = P_0$.
\end{enumerate}
The $\SLP$s of the constructed elements have length $\OR{\log(q)}$. The algorithms have time complexity $\OR{\log(q)^3}$ field operations. Analogous algorithms exist for $G_{P_0}$.
\end{lem}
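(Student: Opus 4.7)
My plan is to exploit the central series $U(q) \supset U(q)^{\prime} \supset \Zent(U(q)) \supset 1$, whose three successive quotients are elementary abelian $3$-groups of rank $n = 2m+1$. The three blocks of standard generators project onto $\F_3$-bases of these quotients, since $\set{a_i}, \set{b_i}, \set{c_i}$ are chosen as bases of $\F_q$ over $\F_3$. Crucially, \eqref{ree_matrix_id1} shows that although multiplication in $U(q)$ is non-abelian, the first coordinate of a product is always additive, the second coordinate is additive whenever the first factor has first coordinate $0$, and the third is additive whenever both factors have first coordinate $0$. Thus the required row reduction decouples into three successive linear problems over $\F_3$.

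For part 1 I would construct $x = y_1 y_2 y_3$ in three stages. Solve $-a = \sum e_i a_i$ over $\F_3$ and set $y_1 = \prod_i S(a_i, x_i, y_i)^{e_i}$; by additivity of the first coordinate, $S(a,b,c) y_1 \in U(q)^{\prime}$. Read off the resulting second coordinate $b^{\prime}$ from the matrix product, solve $-b^{\prime} = \sum f_i b_i$, and set $y_2 = \prod_i S(0, b_i, z_i)^{f_i}$; then $S(a,b,c) y_1 y_2 \in \Zent(U(q))$. Finally, read off the third coordinate $c^{\prime\prime}$, solve $-c^{\prime\prime} = \sum g_i c_i$, and set $y_3 = \prod_i S(0, 0, c_i)^{g_i}$. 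Then $S(a,b,c) x = 1$, hence $g x = h(\lambda)$. Part 2 is the mirror image: build $x = y_3 y_2 y_1$ by applying the same filtration argument on the left of $S(a,b,c)$.

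Part 3 reduces to part 1. Since $U(q) \triangleleft G_{P_{\infty}}$ has order $q^3 = \abs{\OV} - 1$, it acts regularly on $\OV \setminus \set{P_{\infty}}$, so there is a unique $S(a, b, c) \in U(q)$ with $P_0 S(a, b, c) = P$. Using the parametrisation in Proposition \ref{ree_doubly_transitive_action}, $(a,b,c)$ can be read off successively from the second, third and fourth coordinates of $P$ (normalised to have first coordinate $1$) by extracting $t$-th roots, each being a Frobenius power $\lambda \mapsto \lambda^{3t}$ computable in $\OR{\log q}$ field operations. Then $g = S(a,b,c)^{-1}$, expressed as an $\SLP$ via part 1, satisfies $P g = P_0$.

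The main technical point is the bookkeeping for the complexity and $\SLP$-length bounds. Solving each of the three $n \times n$ linear systems over $\F_3$ requires $\OR{\log(q)^2}$ $\F_3$-operations once the change of basis from $\F_q$ to $\F_3^n$ is precomputed, and extracting $b^{\prime}$ or $c^{\prime\prime}$ costs $\OR{\log q}$ matrix multiplications in $\GL(7, \F_q)$. A safe overall bound is $\OR{\log(q)^3}$ field operations. Each $y_i$ is a product of at most $n$ generators with exponents in $\set{0,1,2}$, so the concatenated $\SLP$ has length $\OR{\log q}$ as claimed.
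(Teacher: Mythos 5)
Your proof is correct. The paper deliberately states this lemma without proof, and your argument --- reducing along the filtration $U(q) \supset U(q)^{\prime} \supset \Zent(U(q)) \supset 1$ using the additivity visible in \eqref{ree_matrix_id1}, recovering $(a,b,c)$ for part 3 from the ovoid parametrisation of Proposition \ref{ree_doubly_transitive_action} via the $t$-th root map $x \mapsto x^{3t}$, and the regularity of $U(q)$ on $\OV \setminus \set{P_{\infty}}$ --- is exactly the intended elementary argument, with the complexity and $\SLP$-length bookkeeping done correctly.
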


\begin{thm} \label{thm_pre_step} Given an oracle for the discrete logarithm problem in $\F_q$, the
  pre-processing step is a Las Vegas algorithm that constructs standard
  generators for $G_{P_{\infty}}$ and
  $G_{P_0}$ as $\SLP$s in $X$ of length $\OR{\log(q) (\log\log(q))^2}$. It has expected time complexity $\OR{(\xi \log\log(q) + \log(q)^3 + \chi_D) \log\log(q)}$ field operations. 
\end{thm}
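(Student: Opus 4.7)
My plan follows the Sylow-$3$ construction for the Suzuki groups in \cite{baarnhielm05}, adapted to the central series $1 \triangleleft \Zent(U(q)) \triangleleft U(q)' \triangleleft U(q)$ of Proposition \ref{sylow3_props}. Each of the three successive quotients is elementary abelian of rank $n = 2m+1$, so a set of standard generators in the form \eqref{inf_std_generators} reduces to producing an $\F_3$-basis of $\F_q$ at each of the three layers, which I do layer by layer; throughout, I harvest random elements of $G_{P_\infty} = U(q)H(q)$ as SLPs in $X$ via Corollary \ref{cl_find_stab_element}. The analogous work for $\mathrm{O}_3(G_{P_0})$ proceeds identically after replacing $P_\infty$ by $P_0$, using Proposition \ref{ree_doubly_transitive_action}.

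To produce non-trivial representatives at each layer: since $U(q)H(q)$ is Frobenius with kernel $U(q)$, commutators $[g_1, g_2]$ of random $g_i \in G_{P_\infty}$ lie in $U(q)$, and by Proposition \ref{ree_prop_frobenius} have order $9$ with probability $1-1/(q-1)$. Cubing a single order-$9$ element gives a non-trivial centre element by Proposition \ref{sylow3_props}(5); a commutator of two such order-$9$ elements lies in $U(q)'$ by \eqref{ree_matrix_id1} (the first coordinate of any commutator in $U(q)$ vanishes) and generically avoids $\Zent(U(q))$. In parallel I extract an element $h$ of order $q-1$ from $\OR{\log\log(q)}$ random samples, using Lemma \ref{ree_totient_prop}. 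Since $U(q)$ acts trivially on each of the three abelian quotients, the action of $h$ on them coincides with the action of some $h(\mu) \in H(q)$ with $\mu$ primitive; a single discrete-log query on the action of $h$ on $\Zent(U(q))$ identifies $\mu$, and thereafter each $h^k$ has an SLP of length $\OR{\log(q)}$ by repeated squaring.

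With $h$ and a seed element on each layer in hand, conjugation by powers of $h$ scales the top-, middle- and bottom-layer leading coordinates by $\mu^{3t-2}$, $\mu^{1-3t}$ and $\mu^{-1}$ respectively (formula \eqref{ree_matrix_id2}). The orbit of each seed coordinate is thus a coset of a cyclic subgroup of $\F_q^\times$ of index $\gcd(3t-2,q-1)$, $\gcd(3t-1,q-1)$ or $1$ respectively. Elementary number theory gives these as $1$, $2$, and $1$, so in all three cases the $\F_3$-span of the orbit is all of $\F_q$: for the middle layer one uses the classical fact that in odd characteristic every element is a sum of two squares, so the squares of $\F_q^{\times}$ span $\F_q$ over $\F_3$. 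Incremental Gaussian elimination over $\F_3$ then extracts $n$ exponents $k_1, \ldots, k_n$ yielding an $\F_3$-basis of leading coordinates at each layer. Since the $x_i$, $y_i$, $z_i$ in \eqref{inf_std_generators} are unspecified, no further correction is needed.

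The complexity analysis is routine: the algorithm makes $\OR{\log\log(q)}$ calls to the oracle of Corollary \ref{cl_find_stab_element} at cost $\OR{\xi\log\log(q) + \log(q)^3 + \chi_D}$ each, together with $\OR{1}$ discrete-log queries and $\OR{\log(q)^3}$ field operations for the $\F_3$-linear algebra, and an outer $\OR{\log\log(q)}$ factor absorbs Las Vegas retries at any probabilistic step. This gives the stated time $\OR{(\xi\log\log(q)+\log(q)^3+\chi_D)\log\log(q)}$ and SLP length $\OR{\log(q)(\log\log(q))^2}$, the latter built from oracle-returned SLPs of length $\OR{\log(q)\log\log(q)}$ concatenated with $\OR{\log(q)}$-length conjugators by powers of $h$. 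The main obstacle I anticipate is the number-theoretic verification that each $H(q)$-orbit $\F_3$-spans $\F_q$; the identities $\gcd(3t-2,q-1)=1$ and $\gcd(3t-1,q-1)=2$ are peculiar to the Ree parametrisation $q = 3^{2m+1}$ and are what distinguish the Ree pre-processing from its Suzuki analogue.
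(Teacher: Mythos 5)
Your construction is essentially the paper's: harvest random elements of the two point stabilisers via Corollary \ref{cl_find_stab_element}, drop into $U(q)$ by commutators (Proposition \ref{ree_prop_frobenius}), descend the central series $\Zent(U(q)) \triangleleft U(q)' \triangleleft U(q)$ by cubing and commutating, and sweep out $\F_3$-bases on the three layers by conjugating with powers of an element of $G_{P_\infty}$ whose diagonal part generates, with the same complexity bookkeeping. The only real divergence is how the basis property is certified: the paper insists that the eigenvalue $\lambda$ of the conjugating element not lie in a proper subfield and concludes directly that the first $2m+1$ conjugates already form bases on each layer, whereas you prove that the full orbit of each leading coordinate $\F_3$-spans $\F_q$ via $\gcd(3t-2,q-1)=1$, $\gcd(3t-1,q-1)=2$ and the sum-of-two-squares fact and then extract a basis by incremental Gaussian elimination over consecutive powers (which necessarily stabilises only at full dimension, so it terminates in $2m+1$ steps) --- both routes are valid and your gcd computations are correct.
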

\begin{proof}
The pre-processing algorithm consists of the following steps:
\begin{enumerate}
\item Obtain random $\mathfrak{a}_1, \mathfrak{a}_2 \in G_{P_{\infty}}$ and $\mathfrak{b}_1, \mathfrak{b}_2
  \in G_{P_0}$ using the algorithm from Corollary
  \ref{cl_find_stab_element}. Let $\mathfrak{c}_1 = [\mathfrak{a}_1, \mathfrak{a}_2]$, $\mathfrak{c}_2 = [\mathfrak{b}_1, \mathfrak{b}_2]$. 
\item Determine if there exists $\mathfrak{d}_1 \in \set{\mathfrak{a}_1, \mathfrak{a}_2}$ that can be diagonalised to $h(\lambda_0) \in
G$, for some $\lambda_0 \in \F_q^{\times}$ that does not lie in a proper subfield of $\F_q$. Similarly determine existence of a $\mathfrak{d}_2$ from $\mathfrak{b}_1$ and $\mathfrak{b}_2$. Determine if $\abs{\mathfrak{c}_1} = \abs{\mathfrak{c}_2} = 9$. Return to the first step if any of these tests fail.

\item 
As standard generators for $G_{P_{\infty}}$ we take $\mathfrak{U} = \mathfrak{U}_1 \cup \mathfrak{U}_2$ where 
\begin{eqnarray}
\mathfrak{U}_1 & =  & \bigcup_{i = 1}^{2m + 1} \set{\mathfrak{c}_1^{\mathfrak{d}_1^i}, (\mathfrak{c}_1^3)^{\mathfrak{d}_1^i}} \\ 
\mathfrak{U}_2 & = & \bigcup_{1 \leqslant i < j \leqslant 2m + 1} \set{[\mathfrak{c}_1^{\mathfrak{d}_1^i}, \mathfrak{c}_1^{\mathfrak{d}_1^j}]} 
\end{eqnarray}
From $\mathfrak{c}_2$ and $\mathfrak{d}_2$ we similarly obtain standard generators $\mathfrak{L}$ for $G_{P_0}$
\end{enumerate}

It follows from \eqref{ree_matrix_id1} and \eqref{ree_matrix_id2} that $\mathfrak{U}$
is of the form \eqref{inf_std_generators}. Similarly, $\mathfrak{L}$ has the correct form. Since the $\mathfrak{a}_i$ and $\mathfrak{b}_i$ are expressed as $\SLP$s in $X$, this is also true for the elements of $\mathfrak{U}$ and $\mathfrak{L}$.


By Corollary \ref{cl_find_stab_element}, the expected time to find
$\mathfrak{a}_i$ and $\mathfrak{b}_i$ is $\OR{\xi \log\log(q) + \log(q)^3 + \chi_D}$, and
these are uniformly distributed independent random elements. The
elements of order dividing $q - 1$ can be diagonalised as required.
By Proposition \ref{sylow3_props}, the proportion of elements of
order $q - 1$ in $G_{P_{\infty}}$ and $G_{P_0}$ is $\phi(q - 1) / (q - 1)$. 

It is straightforward to determine if $\mathfrak{a}_i$ or $\mathfrak{b}_i$ diagonalise to some $h(\lambda_0)$, since they are triangular. To determine if $\lambda_0$ lies
in a proper subfield, it is sufficient to determine if $\abs{\lambda_0} \mid
3^n - 1$, for some proper divisor $n$ of $2m + 1$. 

Hence by Proposition \ref{ree_prop_frobenius} the expected time for
the first two steps is \[\OR{(\xi \log\log(q) + \log(q)^3 + \chi_D) \log\log(q)}\] field
operations.




Since $\mathfrak{U}$ has the form \eqref{inf_std_generators}, as described in the beginning of Section \ref{section:preprocessing_step}, it determines three sets of
field elements $\set{a_1, \dotsc, a_{2m+1}}$, $\set{b_1, \dotsc,
  b_{2m+1}}$ and $\set{c_1, \dotsc, c_{2m+1}}$. By \eqref{ree_matrix_id2},
in this case each $a_i = a \lambda_0^i$, $b_i = b \lambda_0^{i(t + 2)}$
and $c_i = c \lambda_0^{i(t + 3)}$, for some $a,b,c \in
\F_q^{\times}$ depending on $\mathfrak{a}_i$ and $\mathfrak{b}_i$. Since
$\lambda_0$ does not lie in a proper subfield, these sets form vector
space bases of $\F_q$ over $\F_3$. Hence $\mathfrak{U}$ and $\mathfrak{L}$ are standard generators and the algorithm is Las Vegas.

\end{proof}

\subsection{Main algorithm}
We now present the algorithm to express an arbitrary $g \in G$ as an $\SLP$. It is given as Algorithm \ref{alg:ree_element_to_slp}.

The idea behind the algorithm is to make use of Lemma
\ref{ree_row_operations} to express elements in $G_{P_{\infty}}$ and
$G_{P_0}$ as $\SLP$s in the given generators. The Lemma effectively
performs Gaussian elimination to reduce an element to a diagonal
matrix $h(\lambda_0)$ for some $\lambda_0 \in \F_q$. This could then
be expressed as an $\SLP$ in the given generators using discrete
log. However, it turns out that we can also use the Lemma to
construct a diagonal matrix with the same trace as $h(\lambda_0)$, which by
Proposition \ref{ree_conjugacy_classes} must be a conjugate. Since
both lie in the same cyclic subgroup they must in fact be the
same matrix, or inverses. Hence discrete log can be avoided.

We can only use Lemma \ref{ree_row_operations} on elements that fix a point of $\OV$, so the first step is to multiply $g$ by a random element until it fixes a point, which happens with high probability.

\begin{figure}[ht]
\begin{codebox}
\refstepcounter{algorithm}
\label{alg:ree_element_to_slp}
\Procname{\kw{Algorithm} \ref{alg:ree_element_to_slp}: $\proc{ElementToSLP}(\mathfrak{U}, \mathfrak{L}, g)$}
\li \kw{Input}: Standard generators $\mathfrak{U}$ for $G_{P_{\infty}}$ and $\mathfrak{L}$ for $G_{P_0}$. Matrix $g \in \gen{X} = G$.
\li \kw{Output}: $\SLP$ for $g$ in $X$
\li \Repeat
\li     \Repeat
\li         $r := \proc{Random}(G)$ 
\li     \Until{$gr$ has an eigenspace $Q \in \OV$  \label{main_alg_find_point_ree}}
\li     Construct $\mathfrak{z}_1 \in G_{P_{\infty}}$ using Lemma \ref{ree_row_operations} and $\mathfrak{U}$ such that $Q\mathfrak{z}_1 = P_0$. \label{main_alg_row_op1_ree} 
\li     \Comment Now $(gr)^{\mathfrak{z}_1} \in G_{P_0}$
\li     Construct $\mathfrak{z}_2 \in G_{P_0}$ using Lemma \ref{ree_row_operations} and $\mathfrak{L}$ such that $(gr)^{\mathfrak{z}_1} \mathfrak{z}_2 = h(\lambda_0)$
\zi     for some $\lambda_0 \in \F_q^{\times}$ \label{main_alg_row_op2_ree} 
\li     $x := \Tr(h(\lambda_0))$ 
\li \Until $x - 1$ is a square in $\F_q^{\times}$ \label{main_alg_square_test}
\li     \Comment{Express diagonal matrix as $\SLP$}
\li Construct $\mathfrak{u} = S(0, 0, \sqrt{(x - 1)^{3t}})S(0, 1, 0)^{\Upsilon}$ using Lemma \ref{ree_row_operations} and $\mathfrak{U} \cup \mathfrak{L}$ \label{main_alg_row_op3_ree} 
\li \Comment{Now $\Tr(\mathfrak{u}) = x$}
\li Let $P_1, P_2 \in \OV$ be the fixed points of $\mathfrak{u}$
\li Construct $\mathfrak{a} \in G_{P_{\infty}}$ using Lemma \ref{ree_row_operations} and $\mathfrak{U}$ such that $P_1 \mathfrak{a} = P_0$ \label{main_alg_row_op4_ree} 
\li Construct $\mathfrak{b} \in G_{P_0}$ using Lemma \ref{ree_row_operations} and $\mathfrak{L}$ such that $(P_2 \mathfrak{a})\mathfrak{b} = P_{\infty}$ \label{main_alg_row_op5_ree} 
\li \Comment{Now $\mathfrak{u^{ab}} \in G_{P_{\infty}} \cap G_{P_0} = H(q)$, so $\mathfrak{u^{ab}} \in \set{h(\lambda_0)^{\pm 1}}$}
\li \If $\mathfrak{u^{ab}} = h(\lambda_0)$ 
\zi \Then
\li     Let $w$ be the $\SLP$ for $(\mathfrak{u^{ab} z}_2^{-1})^{\mathfrak{z}_1^{-1}} r^{-1}$ \label{main_alg_get_slp1_ree} 
\li     \Return $w$
\zi \Else
\li     Let $w$ be the $\SLP$ for $((\mathfrak{u^{ab}})^{-1} \mathfrak{z}_2^{-1})^{\mathfrak{z}_1^{-1}} r^{-1}$ \label{main_alg_get_slp2_ree} 
\li     \Return $w$
    \End 
\zi \kw{end}
\end{codebox}
\end{figure}

\subsection{Correctness and complexity}

\begin{thm} \label{thm_element_to_slp_ree}
Algorithm $\ref{alg:ree_element_to_slp}$ is correct, and is a Las Vegas algorithm.
\end{thm}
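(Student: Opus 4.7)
The plan is to split the argument into (a) correctness of the returned $\SLP$, and (b) the Las Vegas termination bound.

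For correctness, first observe that after line \ref{main_alg_row_op2_ree} we have $(gr)^{z_1} z_2 = h(\lambda)$, so
\[ g = (h(\lambda) z_2^{-1})^{z_1^{-1}} r^{-1}. \]
Thus the task reduces to realizing $h(\lambda)^{\pm 1}$ as an $\SLP$ in $X$ by means of the auxiliary element $u$. By a direct matrix computation with the formulas of Section \ref{section:ree_theory} one checks $\Tr(u) = x = \Tr(h(\lambda))$. The element $ab$ sends the two fixed points $P_1, P_2$ of $u$ to $P_0$ and $P_\infty$, so $u^{ab}$ fixes both $P_\infty$ and $P_0$ and hence lies in $G_{P_\infty} \cap G_{P_0} = H(q)$, which is cyclic of order $q - 1$ (prime to $3$). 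Since $\Tr(u^{ab}) = \Tr(u) = \Tr(h(\lambda))$, Proposition \ref{ree_conjugacy_classes} implies $u^{ab}$ and $h(\lambda)$ are $G$-conjugate. For a regular element of $H(q)$, $G$-conjugation restricted to $H(q)$ coincides with the action of $\Norm_G(H(q))/H(q) \cong \Cent_2$, which by Proposition \ref{sylow3_props} is inversion through $\Upsilon$ (a direct computation gives $\Upsilon h(\lambda) \Upsilon^{-1} = h(\lambda^{-1})$). Consequently $u^{ab} \in \{h(\lambda), h(\lambda)^{-1}\}$, which is exactly the dichotomy handled by the if/else branches at line \ref{main_alg_get_slp1_ree}.

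For termination, I would bound the two loops separately. The inner loop requires $gr$ to have an eigenspace in $\OV$ distinct from $P_{\infty}$; since $r$ is uniformly random and independent of $g$, $gr$ is uniform in $G$, and Lemma \ref{ree_totient_prop} gives a fixing probability at least $1/2$, yielding $\OR{1}$ expected iterations. The outer loop requires $x - 1$ to be a nonzero square in $\F_q^{\times}$; since the distribution on $\lambda$ inherited from the inner loop is spread over a large subset of $\F_q^\times$, and since squares have density roughly $1/2$, the acceptance probability is bounded away from $0$, again giving $\OR{1}$ expected iterations. Combining with Lemma \ref{ree_row_operations} for the row-reduction steps, Theorem \ref{thm_pre_step} for the cost of invoking the standard generators, and the $\OR{\chi_D}$ cost of the discrete logarithm implicit in constructing $u$, yields polynomial expected runtime.

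The principal obstacle is the structural verification that the explicit $u$ produced on line \ref{main_alg_row_op3_ree} fulfils the two properties used above: having exactly two fixed points on $\OV$, and having order prime to $3$. This reduces to a direct matrix calculation with \eqref{ree_matrix_id1}--\eqref{ree_matrix_id2} and the definition of $\Upsilon$, showing that $u$ lies in a cyclic subgroup conjugate to $H(q)$ precisely when the square root $\sqrt{(x-1)^{3t}}$ exists in $\F_q$, which is the condition enforced by line \ref{main_alg_square_test}. Once this is in hand, the Las Vegas property follows because the returned $\SLP$ can be explicitly evaluated and compared to $g$ to certify correctness on each run.
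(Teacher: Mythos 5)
Your proof is correct and follows essentially the same route as the paper: trace-matching via $u$, Proposition \ref{ree_conjugacy_classes} to conclude $u$ is conjugate to $h(\lambda)^{\pm 1}$, conjugation into $H(q) = G_{P_\infty} \cap G_{P_0}$ by $ab$, and the observation that every factor of $w$ has been obtained as an $\SLP$ (your normaliser/fusion argument just makes explicit the dichotomy the paper records as a comment in the algorithm). The one step you flag as an obstacle --- verifying that $u$ has order prime to $3$ and exactly two fixed points --- is likewise elided as a direct computation in the paper's proof, and your loop-termination estimates, though not needed for this statement, match those given in Theorem \ref{thm_element_to_slp_complexity}.
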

\begin{proof}
  First observe that since $r$ is randomly chosen, we obtain it
  as an $\SLP$. 

The elements $\mathfrak{z_1}$ and $\mathfrak{z_2}$ can be constructed using Lemma \ref{ree_row_operations}, so we can obtain them as $\SLP$s.

The element $\mathfrak{u}$ constructed at line \ref{main_alg_row_op3_ree} clearly has trace $x$.
The element $\Upsilon$ interchanges $P_{\infty}$ and $P_0$, hence $S(0, 1, 0)^{\Upsilon} \in G_{P_0}$ and $\mathfrak{u}$ can be computed using Lemma \ref{ree_row_operations}, so we
obtain it as an $\SLP$. From Proposition \ref{ree_conjugacy_classes}
we know that $\mathfrak{u}$ is conjugate to $h(\lambda_0)^{\pm 1}$, for some $\lambda_0 \in \F_q^{\times}$, and therefore fixes two points of $\OV$. Hence 
the elements found at lines \ref{main_alg_row_op4_ree}
and \ref{main_alg_row_op5_ree} can be computed using Lemma \ref{ree_row_operations}, so we obtain
them as $\SLP$s.

Finally, the elements that determine $w$ have
been constructed as $\SLP$s, and it is clear that if we evaluate $w$ we obtain
$g$. Hence the algorithm is Las Vegas and the theorem follows.
\end{proof}

\begin{thm} \label{thm_element_to_slp_complexity}
Algorithm $\ref{alg:ree_element_to_slp}$ has expected time complexity $\OR{\xi + \log(q)^3}$ field
operations and the length of the
returned $\SLP$ is $\OR{(\log(q) \log\log(q))^2}$.
\end{thm}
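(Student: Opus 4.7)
The plan is to bound separately the expected number of iterations of each loop, the per-iteration cost, and the length of the resulting SLP in $X$; each piece should reduce to already-proved facts from the preceding sections.

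First I would bound iteration counts. The inner \textbf{repeat-until} loop terminates as soon as $gr$ fixes some point $Q \in \OV$ distinct from $P_\infty$. Since $r$ is uniformly random, $gr$ is uniformly random in $G$, and Lemma \ref{ree_totient_prop} gives probability at least $1/2$ that a uniform element fixes a point of $\OV$; excluding the single point $P_\infty$ loses only $O(1/q)$, so the expected number of iterations is $O(1)$. The outer loop terminates once $x - 1$ is a nonzero square in $\F_q$. Using Proposition \ref{ree_conjugacy_classes}, different values of $\lambda$ give different traces, and a counting argument (or appeal to the fact that roughly half of $\F_q^\times$ is square) shows the success probability per outer iteration is bounded below by a positive constant, so again $O(1)$ iterations suffice in expectation.

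Next I would account for per-iteration cost. Sampling a random element costs $O(\xi)$ field operations. Extracting an eigenspace of a $7 \times 7$ matrix is $O(1)$ since the degree is constant. Each of the four invocations of row reduction at lines \ref{main_alg_row_op1_ree}, \ref{main_alg_row_op2_ree}, \ref{main_alg_row_op4_ree}, \ref{main_alg_row_op5_ree} (and the construction of $u$ at line \ref{main_alg_row_op3_ree}) costs $O(\log(q)^3)$ field operations by Lemma \ref{ree_row_operations}. Testing whether $x - 1$ is a square, taking the square root in $\F_q$, trace and matrix computations all cost at most $O(\log(q))$. Summing across a constant expected number of iterations gives the claimed total of $O(\xi + \log(q)^3)$ field operations.

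The main obstacle, and the most delicate piece, is tracking SLP length through the two-level composition. Each row-reduction step of Lemma \ref{ree_row_operations} returns an SLP of length $O(\log(q))$ \emph{in the standard generators} $U \cup L$. But by Theorem \ref{thm_pre_step}, each standard generator is itself an SLP in $X$ of length $O(\log(q)\log\log(q)^2)$. Substituting, the SLP for each row-reduction output, viewed as a word in $X$, has length
\[
O\!\bigl(\log(q)\bigr) \cdot O\!\bigl(\log(q) \log\log(q)^2\bigr) \;=\; O\!\bigl((\log(q) \log\log(q))^2\bigr).
\]
The random element $r$ contributes an SLP of length $O(1)$ in $X$, and the returned word $w$ is a product of constantly many factors of the types just analysed; so its length is dominated by the row-reduction contribution, yielding the stated $O((\log(q)\log\log(q))^2)$ bound.
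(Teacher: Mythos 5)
Your proposal is correct and follows essentially the same route as the paper: bound the expected number of iterations of each loop by $O(1)$ via the fixed-point and square-element probabilities, charge $\OR{\log(q)^3}$ to each row-reduction step from Lemma \ref{ree_row_operations} and $\OR{\xi}$ to random selection, and obtain the SLP length by composing the $\OR{\log(q)}$-length words in the standard generators with the $\OR{\log(q)\log\log(q)^2}$-length SLPs for those generators from Theorem \ref{thm_pre_step}. Your explicit accounting of the two-level SLP composition is in fact slightly more detailed than the paper's one-line version of that step.
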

\begin{proof}
It follows immediately from Lemma \ref{ree_row_operations} that lines
\ref{main_alg_row_op1_ree}, \ref{main_alg_row_op2_ree},
\ref{main_alg_row_op3_ree}, \ref{main_alg_row_op4_ree} and
\ref{main_alg_row_op5_ree} use $\OR{\log(q)^3}$ field operations.

From Corollary \ref{cl_random_selections}, the expected time to find $r$ is $\OR{\xi}$ field operations. Half of the elements of $\F_q^{\times}$ are squares, and $x$ is uniformly random, hence the expected time of the outer repeat statement is $\OR{\xi + \log(q)^3}$ field operations.

Obtaining the fixed points of $u$, and performing the check at line
\ref{main_alg_find_point_ree} only amounts to considering eigenvectors,
hence uses $\OR{\log{q}}$ field operations. Thus the expected time complexity of
the algorithm is $\OR{\xi + \log(q)^3}$ field operations.

From Theorem \ref{thm_pre_step} each standard generator $\SLP$ has length $\OR{\log(q)(\log\log(q))^2}$ and hence $w$ has length $\OR{(\log(q)\log\log(q))^2}$ since Lemma \ref{ree_row_operations} increases the length by a factor $\log(q)$.
\end{proof}

\section{Conjugates of the standard copy}
\label{section:ree_conjugacy}

Assume that we are given a conjugate $G$ of $\Ree(q)$. We consider the problem of constructing $g \in\GL(7, q)$ such that $G^g =
\Ree(q)$, thus obtaining an algorithm that constructs effective
isomorphisms from any conjugate of $\Ree(q)$ to the standard copy.



\begin{thm} \label{thm_ree_conjugacy} 
Assume an oracle for the discrete logarithm
  problem in $\F_q$. There exists a Las Vegas algorithm that, given a
  conjugate $G = \gen{X}$ of $\Ree(q)$, constructs $g \in \GL(7, q)$ such that
  $\gen{X}^g = \Ree(q) = \mathfrak{S}$. The algorithm has expected time complexity $\OR{\xi \log\log(q) + \log(q)^2 + \chi_D}$ field operations.
\end{thm}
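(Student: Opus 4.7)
The plan is to build the conjugating matrix $g$ in three stages, each stage pinning down a coarser-to-finer piece of the standard structure of $\Ree(q)$. The overall strategy is the one attributed to Kantor in the introduction: use the involution centraliser, which by Proposition \ref{pr_element_props}(1) is $\gen{j}\times\PSL(2,q)$, as a rigid substructure whose standard embedding in $\Ree(q)$ is known explicitly, and then transport it.

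First I would produce an involution $j\in G$ by taking a random element of even order (available in expected $\OR{\xi}$ time by Corollary \ref{cl_random_selections}) and powering. Using \cite{bray00} and random elements of the derived subgroup as in Section \ref{section:random_elements}, I obtain generators for $C=\Cent_G(j)$ and for $C'\cong\PSL(2,q)$; Proposition \ref{psl_generation} ensures a constant expected number of random draws suffice. The MeatAxe, applied to the natural module restricted to $C'$, returns the decomposition $V_3\oplus V_4$ of Proposition \ref{pr_inv_cent_module_structure} together with a change-of-basis $h_1$. This contributes the $\sigma_0(\log q)\log q$ term to the complexity.

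Next I would normalise $j$ and the invariant form. After $h_1$, the involution $j$ is block-diagonal with eigenspaces of dimensions $3$ and $4$; by an explicit permutation of basis vectors I move it to $h(-1)=\diag(-1,1,-1,1,-1,1,-1)$. By Proposition \ref{pr_inv_cent_forms} the form preserved by the conjugate of $C'$ now has matrix $\antidiag(1,a,1,-a,1,a,1)$ with $a$ a square in $\F_q^\times$; a diagonal rescaling $h_2$ (which commutes with $h(-1)$ and preserves the block structure) takes $a$ to $1$, so the form becomes $J$. At this point $h_1 h_2$ has conjugated $j$ to $h(-1)$ and sent the form to the standard one.

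The third, and technically most delicate, stage is to align $C'$ with the centraliser of $h(-1)$ in the standard copy. The standard $C'(\Ree(q))$ acts on $V_3$ via the symmetric square of the natural $\PSL(2,q)$-module. Since $V_3$ is absolutely irreducible and cannot be written over a smaller field, I apply Theorem \ref{thm_psl_recognition} to $C'$ acting on $V_3$; this gives an effective isomorphism $\varphi:C'\to\PSL(2,q)$. Pulling back an explicit pair of standard generators of $\PSL(2,q)$ (e.g.\ a diagonal matrix generating $H(q)\cap C'$ and the involution $\Upsilon$ restricted to $C'$), I obtain two matrices in $C'$ whose intended images in the standard copy are known from Section \ref{section:ree_theory}. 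Simultaneous diagonalisation / conjugation solves for a further change-of-basis $h_3$ that centralises $h(-1)$, preserves $J$, and realises this alignment. Here Proposition \ref{pr_inv_cent_normaliser} tells us that the stabiliser of $C'$ in $\Omega(7,q)$ modulo $C'$ has order $2$, so at most two candidate conjugators arise; I try both and accept the one for which the test of Theorem \ref{thm_ree_standard_recognition} succeeds on $X^{h_1 h_2 h_3}$.

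The hardest part will be the last stage: one must check that the matching of standard generators in $V_3$ really determines the correct embedding in $V_4$ as well, and that the $\Omega(7,q)$-normaliser ambiguity of Proposition \ref{pr_inv_cent_normaliser} is the only genuine ambiguity. The verification step invokes Theorem \ref{thm_ree_standard_recognition} once at cost $\OR{\sigma_0(\log q)(\log q+|X|)}$ and makes the whole procedure Las Vegas. Summing the costs---$\OR{\xi\log\log q}$ for finding $j$ and the centraliser generators, $\OR{\sigma_0(\log q)\log q}$ for the MeatAxe and final check, and $\OR{\log(q)\log\log(q)^2+\chi_D}$ for applying Theorem \ref{thm_psl_recognition} in dimension $3$---yields the claimed bound $\OR{\xi\log\log q+\log(q)(\sigma_0(\log q)+\log q)+\chi_D}$.
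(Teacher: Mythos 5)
Your overall strategy is the same as the paper's: find an involution $j$, compute $C=\Cent_G(j)$ and $C'\cong\PSL(2,q)$ via \cite{bray00}, split the module as $V_3\oplus V_4$ with the MeatAxe, use Theorem \ref{thm_psl_recognition} to align $C'$ with the standard centraliser $C_S=\Cent_S(h(-1))'$, correct the invariant form using Proposition \ref{pr_inv_cent_forms}, and accept the answer after a final test with Theorem \ref{thm_ree_standard_recognition}, the ambiguity being controlled by Proposition \ref{pr_inv_cent_normaliser}. The cost accounting is also essentially right. The one structural difference is that you run the $\PSL(2,q)$ recognition on the $3$-dimensional constituent, whereas the paper runs it on the $4$-dimensional one and then transports the result to the $3$-space.

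The genuine gap is exactly the point you defer as ``the hardest part'': making the alignment on $V_3$ compatible with the alignment on $V_4$. Matching standard generators of $C'$ and $C_S$ via two independent runs of Theorem \ref{thm_psl_recognition} identifies each group with $\PSL(2,q)$ only up to an automorphism, and the Galois part of that automorphism matters here: by Proposition \ref{pr_inv_cent_module_structure}, $V_4\cong S^{\phi^i}\otimes S^{\phi^k}$ is a tensor product of two \emph{distinct} Frobenius twists of the natural module, and these twists are not pinned down by the action on $V_3\cong\mathcal{S}^2(S)$. If the two identifications differ by a nontrivial field automorphism, the matched generator tuples give non-isomorphic $7$-dimensional modules and your system for $h_3$ simply has no solution; ``try both candidates from Proposition \ref{pr_inv_cent_normaliser}'' does not repair this, because that proposition only accounts for the index-$2$ ambiguity \emph{after} a module isomorphism exists. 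The paper deals with this in two ways that your sketch omits: (i) it obtains standard generators on $V_4$ as $\SLP$s, evaluates those same $\SLP$s on the generators of $C'$ and projects to $V_3$, so that the change-of-bases $c_3$ and $c_4$ are induced by one and the same abstract isomorphism $C^G\to C^S$; and (ii) it explicitly searches for the twist $k$ with $V^G_4\cong(V^S_4)^{\phi^k}$ before computing $c_4$, and reuses that $k$ for the $3$-space. To complete your version you would need the analogous step: determine the Frobenius twist relating the two identifications (e.g.\ by testing, for each of the $2m+1$ twists, whether the $7$-dimensional modules with matched generators are isomorphic), and only then solve for $h_3$. With that added, your argument goes through and stays within the stated complexity.
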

\begin{proof}
We prove the result by exhibiting the algorithm. Let $V = \gen{e_1, \dotsc, e_7}$ be the natural module for $\mathfrak{S}$.
\begin{enumerate}
\item Find a random involution $j_G \in G$. Let $j_\mathfrak{S} = h(-1) \in \mathfrak{S}$. By Corollary \ref{cl_random_selections} the expected time is $\OR{\xi + \log(q) \log\log(q)}$.

\item Compute generators for $\Cent_G(j_G)$ using \cite{bray00}, and generators for $C^G = \Cent_G(j_G)^{\prime}$ by taking commutators
  of the generators of $\Cent_G(j_G)$. Observe that $\Cent_\mathfrak{S}(j_\mathfrak{S}) = \gen{\Upsilon, h(\omega), S(0, 1, 0)}$ \cite{MR2653247} and similarly compute
  generators for $C^\mathfrak{S} = \Cent_\mathfrak{S}(j_\mathfrak{S})^{\prime}$. Similarly as in the proof of Corollary \ref{cl_find_stab_element}, the expected time is $\OR{1}$.
\item Use the MeatAxe to decompose the module of $C^G$ into
  its direct summands $V^G_3$ and $V^G_4$ of dimension $3$ and $4$. Decompose the module of $C^\mathfrak{S}$ into $V^\mathfrak{S}_3 = \gen{e_2, e_4, e_6}$ and $V^\mathfrak{S}_4 = \gen{e_1, e_3, e_5, e_7}$. Hence obtain change-of-bases $c_G$ and $c_\mathfrak{S}$ which exhibit the direct sums, with the $3$-dimensional
  submodules coming first. Let $C^G_3$ and $C^G_4$ be the projections
  of $C^G$ acting on the $3$-space and $4$-space, respectively, and
  similarly define $C^\mathfrak{S}_3$ and $C^\mathfrak{S}_4$. Since we have $\OR{1}$ generators for $C^G$ and $C^\mathfrak{S}$, the expected time is $\OR{\log(q)}$. Note that we also obtain a bijection between the generators of $C^G$ and $C^G_4$ or $C^G_3$, respectively, and similarly for $C^\mathfrak{S}$.
\item Use
  Theorem \ref{thm_psl_recognition} to constructively recognise
  $C^G_4$ and $C^\mathfrak{S}_4$ and obtain standard generators $Y^G_4$ and $Y^\mathfrak{S}_4$ for these groups
  as $\SLP$s in the input generators. Evaluate the $\SLP$s on the generators of $C^G$ and $C^\mathfrak{S}$ and use $c_G$ and $c_\mathfrak{S}$ to project the resulting matrices to the $3$-spaces. Hence also obtain standard generators $Y^G_3$ and $Y^\mathfrak{S}_3$ for $C^G_3$ and $C^\mathfrak{S}_3$. The expected time is $\OR{(\xi + \log(q)\log\log(q))\log\log(q) + \chi_D}$. Note that $\abs{Y^G_4} = \abs{Y^G_3} = \abs{Y^\mathfrak{S}_4} = \abs{Y^\mathfrak{S}_3}$.
\item By Proposition \ref{pr_inv_cent_module_structure}, $V^\mathfrak{S}_4 \cong \mathfrak{A}^{\varphi^{i_\mathfrak{S}}} \otimes \mathfrak{A}^{\varphi^{k_\mathfrak{S}}}$, for some $1 \leqslant i_\mathfrak{S} < k_\mathfrak{S} \leqslant 2m + 1$. Similarly, $V^G_4 \cong \mathfrak{A}^{\varphi^{i_G}} \otimes \mathfrak{A}^{\varphi^{k_G}}$. Use the MeatAxe together with $Y^G_4$ and $Y^\mathfrak{S}_4$ to obtain $1 \leqslant k \leqslant 2m+1$ such that $V^G_4 \cong (V^\mathfrak{S}_4)^{\varphi^k}$. Hence obtain a change-of-basis $c_4$ between these. Then $(C^G_4)^{c_4} = C^\mathfrak{S}_4$. The expected time is $\OR{\log(q)^2}$. 
\item Similarly, use the MeatAxe together with $Y^G_3$ and $Y^\mathfrak{S}_3$ to construct a change-of-basis $c_3$ from $V^G_3$ to $(V^\mathfrak{S}_3)^{\varphi^k}$. Then $(C^G_3)^{c_3} = C^\mathfrak{S}_3$. The expected time is $\OR{\log(q)}$.
\item Let $c_7$ be the diagonal join of $c_3$ and $c_4$. Let $c = c_g c_7 c_\mathfrak{S}^{-1}$. Then $(C^G)^c = C_\mathfrak{S}$.
\item Now $C_\mathfrak{S} \leqslant G^c \cap \mathfrak{S}$, so $G^c$ must preserve a form which is preserved by $C_\mathfrak{S}$. Use the MeatAxe to construct the form $K$ preserved by $G^c$. By Proposition \ref{pr_conj_form}, $K = \antidiag(b, a, b, -a, b, a, b)$ for some $a, b \in (\F^{\times})^2$, up to a scalar multiple. Let $x = \sqrt{a}$, $y = \sqrt{b}$ and $c_J = \diag(y, x, y, x, y, x, y)$. Then $G^{c c_J}$ preserves the form $J$ and $c_J$ normalises $C_\mathfrak{S}$. The expected time is $\OR{1}$.
\item Now $G^{c c_J} < \Omega(7, q)$ and $C_\mathfrak{S} < G^{c c_J}$. By Proposition \ref{pr_inv_cent_normaliser}, $C_\mathfrak{S}$ is contained in at most four $\Omega(7, q)$-conjugates of $\mathfrak{S}$, so $G^{c c_J} = \mathfrak{S}$ with probability at least $1/4$. Use Theorem \ref{thm_ree_standard_recognition} to test this. The expected time is $\OR{\sigma_0(\log(q))\log(q)}$.
\end{enumerate}
If any of the tests or Las Vegas algorithms used fail, we start again from the beginning. In total, the expected time complexity is $\OR{\xi \log\log(q) + \log(q)(\sigma_0(\log(q)) + \log(q)) + \chi_D}$ field operations. This proves the result.
\end{proof}

\subsection{Main theorem}

\begin{proof}[Proof of Theorem \ref{main_theorem}]
  The algorithm providing $\Psi$ follows from Theorem
  \ref{thm_ree_conjugacy}. Since $\Psi$ and $\Psi^{-1}$ are just conjugations, they can be computed using $\OR{1}$ field operations, so they are effective.

Constructive membership testing in
  $\Ree(q)$ follows from Theorem \ref{thm_pre_step} and Algorithm
  \ref{alg:ree_element_to_slp}. For constructive membership testing in
  $\gen{X}$ we first map the element to $\Ree(q)$ using $\Psi$, then express it as an $\SLP$.
\end{proof}

\section{Implementation and performance}

Implementations of the algorithms are available in $\MAGMA$. The
implementations use the existing $\MAGMA$ implementations of the
algorithms described in \cite{bray00}, \cite{lg95},
\cite{crlg95}, \cite{psl_recognition},
\cite{smallerfield} and \cite{meataxe, better_meataxe}.

We have benchmarked the computation of generating sets for
stabilisers, in other words most of the algorithm from Theorem
\ref{thm_pre_step}. This is shown in Figure
\ref{fig:stab_benchmark}. For each field size $q = 3^{2m + 1}$,
generating sets for stabilisers of $100$ random points were
computed, and the average running time for each call is listed. The
amount of this time that was spent in discrete logarithm computations
outside \cite{psl_recognition}, $\SLP$ evaluations and in
\cite{psl_recognition} is also indicated. Note that the algorithm of
\cite{psl_recognition} also uses a discrete logarithm oracle.

When $2m + 1$ has a \lq\lq small'' prime divisor, finite field
arithmetic in $\F_q$ in $\MAGMA$ is particularly fast. This is because
$\MAGMA$ uses Zech logarithms for finite fields up to a certain size,
and for larger fields it tries to find a subfield smaller than this
size. If this is possible the arithmetic in the larger field will be very fast. To avoid jumps in the figure, and to properly measure field
operations, we have turned off this optimisation, and have in each case divided by the time required for $10^6$ multiplications of random pairs of field elements.


\begin{figure}[ht]
\includegraphics[scale=0.75]{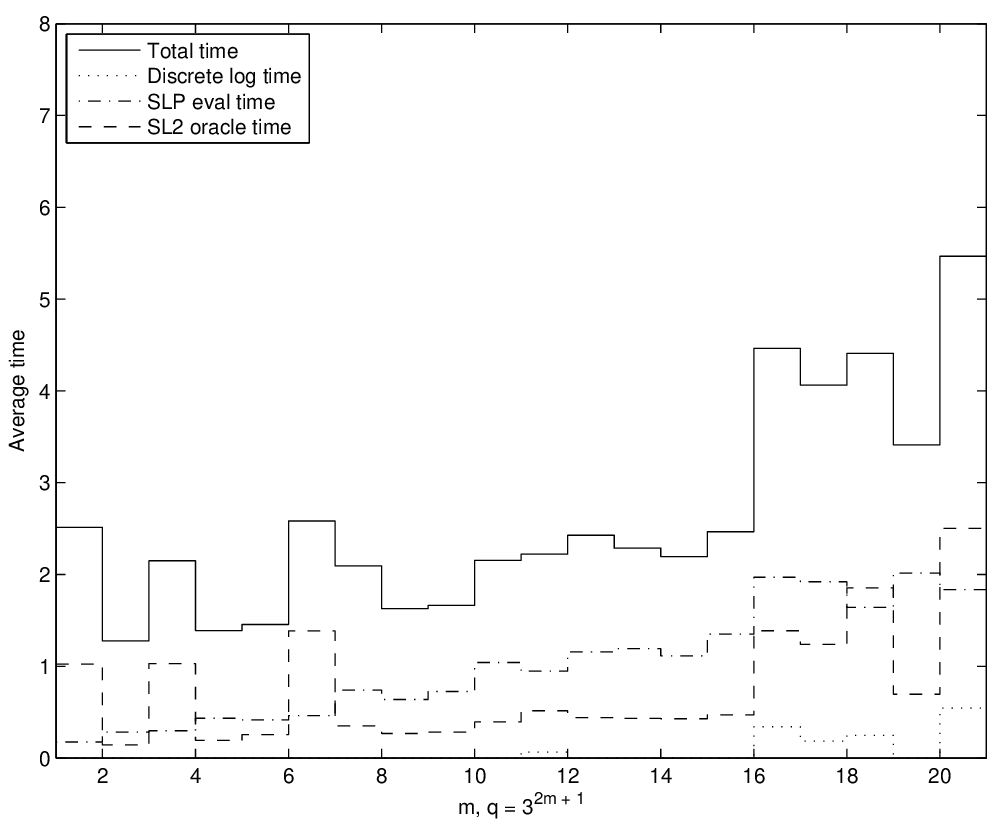}
\caption{Benchmark of stabiliser computation}
\label{fig:stab_benchmark}
\end{figure}

In the same fashion, we have benchmarked the conjugation algorithm from Theorem \ref{thm_ree_conjugacy}. This is shown in Figure \ref{fig:conj_benchmark}.

\begin{figure}[ht]
\includegraphics[scale=0.75]{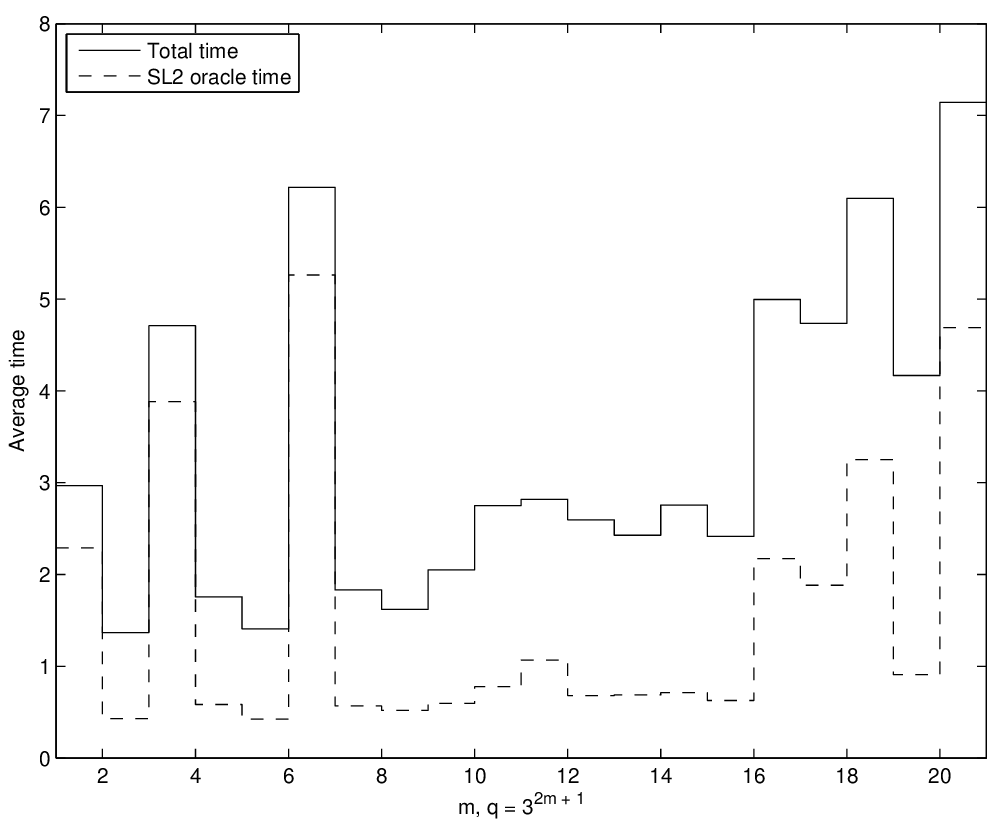}
\caption{Benchmark of Ree conjugation}
\label{fig:conj_benchmark}
\end{figure}


All benchmarks were carried out using $\MAGMA$ V2.18-2, Intel64
flavour, on a PC with an Intel Core2 CPU running at $2$ GHz, and with
$2$ GB of RAM. The largest value of $m$ in the tests was $20$, since discrete logarithm computations became very slow in $\F_{3^{43}}$. 







\bibliographystyle{amsplain}
\bibliography{ree}

\end{document}